\newcommand{\reach}{{\rm rch}}
\newcommand{\hf}{\hat{f}}
\newcommand{\hfpl}{\hat{f}_{\rm PG}}
\newcommand{\fpl}{f_{\rm PG}}
\newcommand{\im}{{\rm im}}
\newcommand{\res}{{\rm res}}
\newcommand{\fat}{{\rm fat}}
\newcommand{\X}{\mathcal{X}}
\newcommand{\Z}{\mathcal{Z}}
\newcommand{\hX}{\mathbb X}
\newcommand{\hZ}{\mathbb Z}
\newcommand{\U}{\mathcal{U}}
\newcommand{\V}{\mathcal{V}}
\newcommand{\R}{\mathbb{R}}
\newcommand{\N}{\mathbb{N}}
\newcommand{\E}{\mathbb{E}}
\newcommand{\proba}{\mathbb{P}}
\newcommand{\mapper}{\mathrm{M}}
\newcommand{\reeb}{\mathrm{R}}
\newcommand{\Hist}{{\rm Hist}}
\newcommand{\Gn}{G_{\delta_n}}
\newcommand{\egn}{d_{{\rm H},n}}
\newcommand{\deh}{d^E_{{\rm H}}}
\newcommand{\esd}{p}
\newtheorem{theorem}{Theorem}[section]
\newtheorem{proposition}[theorem]{Proposition}
\newtheorem{lemma}[theorem]{Lemma}
\newtheorem{definition}[theorem]{Definition}
\title{Statistical analysis of Mapper for stochastic and multivariate filters}
\author{Mathieu Carri\`ere\footnote{DataShape, Inria Sophia Antipolis, Biot, France, {\tt mathieu.carriere@inria.fr}}, Bertrand Michel\footnote{Laboratoire de Math\'ematiques 
Jean Leray, UMR\_C 6629, Ecole Centrale de Nantes,  France, {\tt bertrand.michel@ec-nantes.fr}}}
\begin{document}
\maketitle

\begin{abstract}
Reeb spaces, as well as their discretized versions called Mappers, are common descriptors used in 
Topological Data Analysis, with plenty of applications in various fields of science, such as computational 
biology and data visualization, among others. The stability and quantification of the rate of convergence 
of the Mapper to the Reeb space has been studied a lot in recent 
works~\cite{Brown2019, Carriere2017, Carriere2018a, Munch2016}, focusing on the case where a 
scalar-valued filter is used for the computation of Mapper. On the other hand, much less is known in 
the multivariate case, when the codomain of the filter is $\R^\esd$, and in the general case, when it is 
a general metric space $(\Z,d_\Z)$, instead of $\R$. The few 
results that are available in this setting~\cite{Dey2017, Munch2016} can only handle continuous topological spaces and cannot be 
used as is for finite metric spaces representing data, such as point clouds and distance matrices.

In this article, we introduce a slight modification of the usual Mapper construction and we give risk bounds for estimating the Reeb space using this
estimator. Our approach applies in particular to the setting where the filter function used to compute Mapper 
is also estimated from data, such as the eigenfunctions of PCA.
Our results are given with respect to the Gromov-Hausdorff distance, computed with specific 
filter-based pseudometrics for Mappers and Reeb spaces defined in~\cite{Dey2017}.
 
We finally provide applications of this setting in statistics and machine learning for different 
kinds of target filters, as well as numerical experiments that demonstrate the relevance of our approach.
\end{abstract}

\section{Introduction}\label{sec:intro}

The \emph{Reeb space} and the \emph{Mapper} are common descriptors of Topological Data Analysis (see for instance \cite{chazal2017introduction}), that can 
summarize and encode the topological features of a given data set using a continuous function, 
often called {\em filter}, defined on it. As such, both objects have been used tremendously in 
many different fields and applications of data science, including, among others, computational 
biology~\cite{Carriere2018, Jeitziner2019, Nicolau2011, Rizvi2017}, computer graphics~\cite{Ge2011, Singh2007}, 
or machine learning~\cite{Bruel-Gabrielsson2018, Naitzat2018}.
Mathematically speaking, the 
Reeb space is a quotient space and the Mapper is a simplicial complex. Both objects are representatives of 
the topology of the input data set, in the sense that any topological feature that is present in these 
objects witnesses the presence of an equivalent one in the input data.  Moreover, the Mapper can be 
thought of as a more tractable approximation of the Reeb space, which, as a quotient space, might be 
difficult to describe and compute exactly. In the simpler case where the filter function is scalar-valued, 
the Mapper and the Reeb space actually become combinatorial graphs, which is why they are mostly used 
for clustering and data visualization. Actually, even when the filter is multivariate, i.e., when its 
domain belongs to $\R^\esd$ with $\esd > 1$, it is common to only compute the skeleton in dimension 1 of 
the Mapper, so as to make it easy to display and interpret.
Even though computation is easier, restricting to scalar-valued functions can still be a dramatic simplification,
since it happens quite often in practice that either multiple filters jointly characterize the data---as is the case, for instance, 
of multiple driver genes explaining a disease or cell differentiation---or that the filter actually takes values
in spaces more complicated than Euclidean space---as is the case where filter functions are stochastic, making
Mappers and Reeb spaces computed with mere realizations of the filter (in Euclidean space) extremely limited.     

In recent works, different notions of stability and convergence of the Mapper to the Reeb space, 
in the case where the filter function is scalar-valued, have been defined and 
studied~\cite{Bauer2014, Brown2019, Carriere2017, Carriere2018a, DeSilva2016}, under various statistical 
assumptions on how data is generated. The case of multivariate and more general filter functions is however 
much more difficult and less understood, since the singular values of the filter function, which turn 
out to be critical quantities to look at in the analysis, cannot be ordered easily, and as a consequence, 
the natural stratification of data (that could be derived for scalar-valued Morse functions for instance) does not extend. 
Few available results, presented in~\cite{Munch2016} and~\cite{Dey2017}, prove nice 
approximation inequalities for continuous spaces, but unfortunately 
do not apply when data is given as a 
finite metric space, such as a point cloud or a distance matrix,
since those finite metric spaces should be thought of as approximations of the underlying
continuous space, and not the space itself.

Moreover, many of previously cited works only consider the case where the values of the 
filter function (either scalar-valued or multivariate) are known exactly on the data points. 
This will not be the case if the filter function is estimated from data, and thus different from the 
filter function used to compute the target Reeb space, as is the case for instance of PCA filters or density filters
which are abundant in Mapper applications. 
This also happens tremendously in statistics and 
machine learning, where the underlying filter is usually a predictor, that has to be estimated with 
standard machine learning methods. 
As explained in this article, another interesting example is when the interesting and underlying filter 
is given by the (scalar-valued) means, or the (multivariate) histograms, of some conditional probability 
distributions associated to each point in the data set, and that what is given at hand are merely 
single realizations of these distributions. Then, the usual way of computing Mappers will clearly not work, 
especially if these conditional probability distributions have large variances, since single realizations 
are not representative at all of the means, or histograms, of the associated conditional probability distributions.  \\

\textbf{Contributions.} The contribution of this article is two-fold: 
\begin{itemize}
    \item We propose risk bounds for the estimation of the 
    Reeb space with a Mapper-based estimator in the general case, that is, for any type of filter
    whose codomain is a complete and locally compact length space~\cite{Burago2001}.
    For this, we use the Gromov-Hausdorff distance computed with filter-based pseudometrics defined 
    on Mappers and Reeb spaces (and originally introduced in~\cite{Dey2017}). Our results are stated in the context where the filter used to compute the Mapper is only an estimation (usually computed from a random sample 
    of data) of the target filter used to compute the Reeb space. 
    \item  We  propose some methodology for using our Mapper-based estimator. We also provide applications and numerical experiments in statistics and machine learning, as well 
    as examples in which the standard Mapper fails at recovering the correct topology of the data, 
    while using our Mapper-based estimator succeeds at doing so.
\end{itemize}

The plan of this article is as follows: in Section~\ref{sec:background}, we recall the basics of Reeb spaces, Mappers, and we introduce the pseudometrics defined on them. Then, we show  risk bounds for  our Mapper-based estimator  in Section~\ref{sec:estim}. Numerical experiments and applications are presented  in Section~\ref{sec:statmeanmapper}. Finally, we conclude and provide 
future investigations in Section~\ref{sec:conc}. 

\section{Background on Reeb spaces and Mappers}\label{sec:background}

In this section, we recall the definitions of the Reeb spaces and Mappers (Section~\ref{subsec:ReebSpaceMapper}), 
and we introduce the Gromov-Hausdorff distance and the filter-based pseudometrics that we use to 
compare them (Section~\ref{subsec:metrics}).

\subsection{Reeb spaces and Mappers}\label{subsec:ReebSpaceMapper}

Reeb spaces and Mappers are mathematical constructions that enable to simplify and visualize the 
various topological structures that are present in topological spaces, through the lens of a 
continuous function, often called \emph{filter}.\\ 

\textbf{Reeb space.} Given a topological space $\X$ and a continuous function 
$f:\X\rightarrow \Z$, where $(\Z,d_{\Z})$ is a metric space,
the \emph{Reeb space} of $\X$ is an approximation of $\X$ that preserves 
its connectivity structures. When $f:\X\rightarrow\R$ is scalar-valued, it is usually called 
the \emph{Reeb graph}~\cite{Reeb1946}.

\begin{definition}\label{def:reeb}
Let $\X$ be a topological space and $f:\X\rightarrow\Z$ be a continuous function 
defined on it. The \emph{Reeb space} of $\X$ is the quotient space
\begin{equation*}\label{eq:ReebSpace}
    \reeb_f(\X) = \X/\sim_f,
\end{equation*}
where, for all $x,x'\in\X$, one has $x\sim_f x'$ if and only if $f(x)=f(x')$ and $x,x'$ belong 
to the same connected component of $f^{-1}(f(x))=f^{-1}(f(y))$. 
\end{definition}

Moreover, the Reeb space comes with a projection $\pi:\X\rightarrow \reeb_f(\X)$ defined with
$\pi(x)=[x]_{\sim_f}$, where $[x]_{\sim_f}$ denotes the equivalence class of $x$ w.r.t. the 
relation $\sim_f$. Since $f$ is continuous, so is $\pi$. \\

\textbf{Approximation with Mapper.} However, the Reeb space is not well-defined when data is 
given as a finite metric space, i.e., a point cloud or a distance matrix, in which case all preimages 
used to compute the Reeb space are either empty or singletons. To handle this issue, the \emph{Mapper} 
was introduced in~\cite{Singh2007} as a tractable approximation of the Reeb space. We first provide its 
definition for continuous spaces. 

\begin{definition}\label{def:mapper}
Let $\X$ be a topological space and $f:\X\rightarrow\Z$ be a continuous function defined on it. 
Moreover, let $\U$ be a cover of $\im(f)$, that is, a family of subsets $\{U_\alpha\}_{\alpha\in A}$ 
of $\Z$ such that $\im(f)\subseteq \bigcup_{\alpha\in A}U_\alpha$.
Let $\V$ be the cover of $\X$ defined as 
$\V=\{V\subseteq\X\,:\,\exists\alpha\in A\text{ s.t. }V\text{ is a connected component of }f^{-1}(U_\alpha)\}$.
The \emph{Mapper} of $\X,f,\U$ is then defined as
\begin{equation*}\label{eq:Mapper}
    \mapper_{f,\U}(\X)=\mathcal N(\V),
\end{equation*}
where $\mathcal N$ denotes the \emph{nerve} of a cover.
\end{definition}

\textbf{Parameters and extension to point cloud.} When data is given as a finite metric space, 
the connected components are usually identified with clustering, and the nerve is computed by assessing 
a non-empty intersection between several cover elements as soon as there exists at least one point that 
is shared by all these elements. In the remaining of this article, we use graph clustering. More precisely,  
we assume that we have a graph $G$ built on top of our finite metric space, and for each element $U$ of 
the cover $\U$, we use the connected components of the subgraph $G(U)$ to compute the Mapper, where $G(U)$ is defined as
\begin{equation}\label{eq:subgraph}
    G(U)=(V_U,E_U),
\end{equation}
where the vertex set $V_U$ is $\{v\in V(G)\,:\,f(v)\in U\}$ and the edge 
set $E_U$ is $\{(u,v)\in E(G)\,:\,u\in V_U,v\in V_U\}$). When $G$ is set to be the $\delta$-neighborhood 
graph $G_\delta$, this amounts to perform single-linkage clustering~\cite{Murtagh2012} with 
parameter $\delta$, and we let 
\begin{equation}\label{eq:mappergraph}
    \mapper_{f,\U,G_\delta}
\end{equation} 
denote the corresponding Mapper for finite metric spaces. 

Moreover, when $\Z=\R^\esd$, it is very usual to define a cover $\U$ with hypercubes by covering every single 
dimension of $\R^\esd$ with intervals of length $r>0$ and overlap percentage $g\in[0,1]$, and then by 
taking the Euclidean products of these intervals. Note that $r$ and $g$ are often called the {\em resolution} 
and the {\em gain} of the cover respectively. We let $\U(r,g)$ denote this particular type of cover. Note 
however that this strategy becomes quickly very expensive, and thus prohibitive, when the dimension $\esd$ is 
large. Actually, even for moderate values, e.g., $\esd=10$, the computation can become very costly if the 
resolution is too small or the gain is too large. Moreover, from a statistical perspective, such a naive strategy 
requires a number of observations which increases exponentially with the dimension, due to the curse of dimensionality. 
It is thus essential to propose greedy methods to define efficient covers in such situations. In Section~\ref{sec:stochmapper}, we provide alternative 
and computationally feasible strategies to cover the filter domain using thickenings of partitions. \\

It has been shown in recent works~\cite{Brown2019, Carriere2017, Carriere2018a, Munch2016} that the Mapper 
actually approximates the Reeb space under various assumptions and metrics when the filter is scalar-valued. 
In the next section, we introduce the filter-based pseudometrics that we will use for comparing Mappers and 
Reeb spaces with the Gromov-Hausdorff distance.

\subsection{The filter-based pseudometric}
\label{subsec:metrics}

The filter-based pseudometric, introduced in~\cite{Bauer2014, Dey2017}, basically measures the diameter of
continuous paths between any two points {\em relative to the filter values}.

\begin{definition}\label{def:pseudomet}
Let $\X$ be a topological space and $f:\X\rightarrow\Z$ be a continuous function defined on it. 
The {\em filter-based pseudometric} $d_f:\X\times\X\rightarrow\R$ is defined as 
$$d_f(x,x')={\rm inf}_{\gamma\in\Gamma(x,x')}\ {\rm max}_{t,t'\in [0,1]}\ d_{\Z}(f\circ\gamma(t), f\circ\gamma(t'))={\rm inf}_{\gamma\in\Gamma(x,x')}\ {\rm diam}_{\Z}(f\circ\gamma),$$
where $\Gamma(x,x')$ denotes the set of all continuous paths $\gamma:[0,1]\rightarrow \X$ such that $\gamma(0)=x$ 
and $\gamma(1)=x'$, and ${\rm diam}_{\Z}$ denotes the {\em diameter} of a subset of $\Z$.

Similarly, the Reeb space $\reeb_f(\X)$ can also be equipped with a pseudometric $\tilde d_f$ using the 
projection $\pi$. For any two equivalence classes $r,r'\in\reeb_f(\X)$,
$$\tilde d_f(r,r')= d_f(x,x') \text{ for arbitrary }x\in\pi^{-1}(r) \text{ and }x'\in\pi^{-1}(r').$$
\end{definition}

Finally, we also define a pseudometric between the nodes of the Mapper $\mapper_{f,\U}(\X)$. Recall that
the nodes of the Mapper are the vertices of the cover $\mathcal N(\mathcal V)$ (see Definition~\ref{def:mapper}),
and hence each node $v$ corresponds to a connected component of $f^{-1}(U)$ for some $U\in\U$. Thus, we can associate
an arbitrary (but distinct) element $z_v\in U$ for each $v$. Let us introduce  the function $f_\U : V(\mapper_{f,\U}(\X)) \rightarrow \Z $ with $f_\U(v) = z_v$.

\begin{definition}\label{def:pseudomet-U}
The filter-based pseudometric is then defined with
$$\tilde d_{f,\U}(v,v')={\rm inf}_{\gamma\in\Gamma(v,v')}\ {\rm max}_{p,q\in\gamma}\ d_{\Z}(f_\U(p),f_\U(q)),$$
where $\Gamma(v,v')$ denotes the set of all paths between $v$ and $v'$ in $\mapper_{f,\U}(\X)$, that is, $\Gamma(v,v')$
is of the form $$\Gamma(v,v')=\{p_1,\dots,p_n\,:\,n\in\N,p_1=v, p_n=v',(p_i,p_{i+1})
\text{ is an edge of }\mapper_{f,\U}(\X), \forall 1\leq i\leq n-1\}.$$
\end{definition}

According to the following proposition, the spaces $\X$ and $\reeb_f(\X)$ (equipped with these pseudometrics) are actually the same when compared with the Gromov-Hausdorff distance~\cite{Burago2001}.

\begin{proposition}\label{prop:spacereeb}
Let $\X$ be a topological space and $f:\X\rightarrow\Z$ be a continuous function defined on it. Then
$$d_{\rm GH}((\X,d_f), (\reeb_f(\X),\tilde d_f))=0.$$
\end{proposition}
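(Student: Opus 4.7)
The plan is to exhibit a correspondence between $\X$ and $\reeb_f(\X)$ whose metric distortion is zero, which is enough to conclude $d_{\rm GH}=0$. The obvious candidate is the graph of the quotient projection, namely $R=\{(x,\pi(x))\,:\,x\in\X\}\subseteq \X\times \reeb_f(\X)$. Since $\pi$ is surjective by construction, $R$ projects onto both factors and is therefore a valid correspondence.

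Next, I would check that Definition~\ref{def:pseudomet} genuinely descends through $\pi$, i.e., that $\tilde d_f$ does not depend on the choice of representatives. This is the only point that requires a small argument. If $x\sim_f x'$, then $f(x)=f(x')$ and $x,x'$ lie in the same connected component of $f^{-1}(f(x))$; since $\X$ is a topological space this connected component is path-connected in the settings considered here (or one uses the standard replacement by quasi-components / continuous paths available once $\X$ is assumed locally path-connected as is implicit throughout the paper). Pick such a path $\alpha\subseteq f^{-1}(f(x))$ from $x$ to $x'$. For any path $\gamma\in\Gamma(x',y)$, the concatenation $\alpha*\gamma$ lies in $\Gamma(x,y)$ and satisfies ${\rm diam}_\Z(f\circ(\alpha*\gamma))={\rm diam}_\Z(f\circ\gamma)$, because $f$ is constant on $\alpha$ and that constant value $f(x)=f(x')$ already belongs to the image $f\circ\gamma$. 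Taking infima over $\gamma$ gives $d_f(x,y)\leq d_f(x',y)$, and by symmetry the two are equal; the same argument in the second slot yields well-definedness of $\tilde d_f$.

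With this in hand, the distortion along $R$ is trivially zero: for any $(x,\pi(x)),(x',\pi(x'))\in R$, the defining equality in Definition~\ref{def:pseudomet} reads $\tilde d_f(\pi(x),\pi(x'))=d_f(x,x')$. Hence
$$\sup_{(x,r),(x',r')\in R}\ \bigl|d_f(x,x')-\tilde d_f(r,r')\bigr|=0,$$
so $d_{\rm GH}((\X,d_f),(\reeb_f(\X),\tilde d_f))\leq 0$, and equality follows since Gromov-Hausdorff distance is nonnegative.

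The only real obstacle is the well-definedness step, and it is mild: it hinges on the fact that two $\sim_f$-equivalent points can be joined by a path of zero $f$-diameter, which can be prepended or appended to any competitor path without changing the quantity being minimized. Everything else is bookkeeping with the definitions of correspondence and Gromov-Hausdorff distance for pseudometric spaces.
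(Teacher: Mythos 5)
Your proof is correct and follows essentially the same route as the paper: both exhibit the graph of the projection $\pi$ as a correspondence and observe that its distortion vanishes by the very definition of $\tilde d_f$, so $d_{\rm GH}=0$. The only addition is your explicit check that $\tilde d_f$ is well defined on equivalence classes (which the paper leaves implicit in Definition~\ref{def:pseudomet}), and that verification is sound under the path-connectedness of the fibers' components you invoke.
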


\begin{proof}
Since $\pi$ is surjective, let $\mathcal C$ be the correspondence between $\X$ and $\reeb_f(\X)$ defined with $\pi$, that is, $\mathcal C=\{
(x,\pi(x))\,:\,x\in \X)\}$.
Let $x,x'\in\X$. Then the metric distortion of $x,x'$ induced by $\mathcal C$ is $\mathcal D(x,x'):=|d_f(x,x')-\tilde d_f(\pi(x),\pi(x'))|=0$ by definition of $\tilde d_f$, 
and then 
$d_{\rm GH}((\X,d_f), (\reeb_f(\X),\tilde d_f))\leq {\rm sup}_{x,x'\in\X}\ \mathcal D(x,x')=0$.
\end{proof}

Let $\res(\U,f)={\rm max}_{\alpha\in A}\ {\rm sup}_{u,v\in U_\alpha \cap \im(f)} d_\Z(u,v)={\rm max}_{\alpha\in A}\ {\rm diam}_{\Z}(U_\alpha \cap \im(f))$ the resolution of $\U$ in $\Z$ with respect to the filter $f$.
It turns out that Mappers and Reeb spaces equipped with their respective pseudometrics are actually close
for covers with small diameters.

\begin{theorem}[\cite{Dey2017}]\label{thm:convdgh}
Let $\X$ be a topological space and $f:\X\rightarrow\Z$ be a continuous function defined on it. Then
$$d_{\rm GH}((\mapper_{f,\U}(\X), \tilde d_{f,\U}),(\reeb_f(\X), \tilde d_f))\leq 5 \cdot \res(\U,f) .$$
\end{theorem}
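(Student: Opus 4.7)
The plan is to construct an explicit correspondence $\mathcal{C}\subset V(\mapper_{f,\U}(\X))\times \reeb_f(\X)$ between the vertex set of the Mapper and the Reeb space, and to bound its metric distortion $\sup_{(v,r),(v',r')\in\mathcal{C}} |\tilde d_{f,\U}(v,v')-\tilde d_f(r,r')|$ by a constant multiple of $\res(\U,f)$. Since the Gromov--Hausdorff distance is at most half of this distortion, this will yield the stated inequality; the factor $5$ leaves slack for non-tight constants.

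First I would build $\mathcal C$ as follows. Each Mapper vertex $v$ corresponds, by Definition~\ref{def:mapper}, to a connected component $C_v$ of $f^{-1}(U_{\alpha(v)})$ for some $U_{\alpha(v)}\in\U$; pick any $x_v\in C_v$ and include $(v,\pi(x_v))$ in $\mathcal C$. For any $x\in\X$ not already paired this way, select a single cover element $U_\alpha$ containing $f(x)$, let $C$ be the connected component of $f^{-1}(U_\alpha)$ through $x$, and include $(v(C),\pi(x))$ in $\mathcal C$, where $v(C)$ is the associated Mapper vertex. This is a valid correspondence because $\U$ covers $\im(f)$ and every Mapper vertex is paired by construction.

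The heart of the argument is the distortion bound, which I would split into two symmetric inequalities. Take $(v,r)$ and $(v',r')$ in $\mathcal C$ with $r=\pi(x)$, $r'=\pi(x')$, $x\in C_v$, $x'\in C_{v'}$. For $\tilde d_{f,\U}(v,v')\le \tilde d_f(r,r')+2\,\res(\U,f)$, I fix a path $\gamma\in\Gamma(x,x')$ approximately realizing $\tilde d_f(r,r')$ and use a Lebesgue-number argument on the open cover of $[0,1]$ by $\gamma$-preimages of connected components of $f^{-1}(U_\alpha)$ to partition $[0,1]$ into subintervals $[t_{i-1},t_i]$ with $\gamma([t_{i-1},t_i])\subset C_{v_i}$. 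The endpoints $\gamma(t_i)$ then lie in $C_{v_i}\cap C_{v_{i+1}}$, so $v_iv_{i+1}$ is an edge of the Mapper and $v=v_0,v_1,\ldots,v_n=v'$ is a valid path (the endpoint identification is arranged by forcing the first and last subintervals to use $U_{\alpha(v)}$ and $U_{\alpha(v')}$). Since $f_\U(v_i)$ and $f(\gamma(t))$ for $t\in[t_{i-1},t_i]$ both lie in $U_{\alpha_i}\cap\im(f)$, the triangle inequality yields $d_\Z(f_\U(v_i),f_\U(v_j))\le 2\,\res(\U,f)+{\rm diam}_\Z(f\circ\gamma)$, which passes to the infimum. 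For the reverse inequality, I would take an $\varepsilon$-optimal Mapper path $v=p_0,\ldots,p_n=v'$, pick $y_i\in C_{p_i}\cap C_{p_{i+1}}$, join $y_{i-1}$ to $y_i$ by a continuous path inside $C_{p_i}$, and concatenate to obtain a path in $\Gamma(x,x')$ whose $f$-image has diameter at most $2\,\res(\U,f)+\tilde d_{f,\U}(v,v')$ by the symmetric triangle estimate.

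The main obstacles are topological tameness issues. In the first direction, the Lebesgue-number argument needs the $\gamma$-preimages of the connected components to be an open cover of $[0,1]$, which holds when $\X$ is locally connected so that those components are open in $f^{-1}(U_\alpha)$. In the second direction, producing a continuous path inside each $C_{p_i}$ joining the chosen $y_i$'s requires $C_{p_i}$ to be path-connected, again an instance of local path-connectedness. Both hypotheses are natural and implicit in the setting of~\cite{Dey2017}, and careful bookkeeping of the two triangle estimates should reproduce (in fact sharpen) the stated constant $5$.
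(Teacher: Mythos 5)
Your proposal cannot be compared against an internal argument, because the paper gives none: Theorem~\ref{thm:convdgh} is imported as a black box from \cite{Dey2017}. Taken on its own terms, your route is sound and self-contained: pair each Mapper vertex $v$ with the classes $\pi(x)$ of points $x$ in the component $C_v$ it represents, then bound the distortion by two triangle-inequality estimates --- a Lebesgue-number subdivision of a near-optimal path in $\X$ into pieces lying in single components of preimages (giving $\tilde d_{f,\U}(v,v')\le \tilde d_f(r,r')+2\res(\U,f)$), and a concatenation of paths inside components along a near-optimal Mapper path (giving the reverse inequality). Since $d_{\rm GH}$ is at most half the distortion, this even yields the sharper bound $d_{\rm GH}\le \res(\U,f)$; the constant $5$ in the cited statement is an artifact of the more general machinery of \cite{Dey2017}, so obtaining a better constant under your hypotheses is not a red flag. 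Two caveats you only partly flag should be made explicit. First, the regularity you invoke (open cover elements, local connectedness for the Lebesgue argument, local path-connectedness so that components can be joined by paths and so that $\tilde d_f$ is well defined on $\reeb_f(\X)$ via arbitrary representatives) is genuinely needed and is part of the standing assumptions of \cite{Dey2017}, not of the bare statement above; without it the nerve-of-components construction itself misbehaves. Second, your key estimate uses $f_\U(v)\in U_{\alpha(v)}\cap\im(f)$, whereas Definition~\ref{def:pseudomet-U} only requires $z_v\in U_{\alpha(v)}$, while $\res(\U,f)$ controls only ${\rm diam}_\Z(U_\alpha\cap\im(f))$; you must therefore fix the representatives in the image, e.g.\ $z_v=f(x_v)$ for some $x_v\in C_v$, for the two $\res(\U,f)$ terms to be legitimate. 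Finally, the endpoint bookkeeping is slightly cleaner if, instead of forcing the first and last Lebesgue subintervals to use $U_{\alpha(v)}$ and $U_{\alpha(v')}$, you simply prepend $v$ (and append $v'$) to the vertex sequence, noting that $x\in C_v\cap C_{v_0}$ already makes them adjacent in the nerve; the same diameter estimate then applies unchanged.
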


\section{Reeb space inference }
\label{sec:estim}

In this section we propose a new Mapper-based estimator for Reeb spaces in the general framework where the domain $(\X, d_{\X})$ and the codomain $(\Z, d_{\Z})$ of the filter function $f$ are  
complete and locally compact length 
spaces~\cite{Burago2001}, 
which in particular means that
shortest paths exist for every pair of points in $\X$ and $\Z$ (as per Theorem~2.5.23 in~\cite{Burago2001}).
We recall that a shortest path between $z$ and $z'$ in $\Z$ is a continuous path $\gamma^*:[0,1]\rightarrow\Z$ with $\gamma^*(0)=z$, $\gamma^*(1)=z'$, such that, for any other continuous path
$\gamma:[0,1]\rightarrow\Z$ with $\gamma(0)=z$, $\gamma(1)=z'$, one has $|\gamma^*|\leq |\gamma|$ and $|\gamma^*|=d_\Z(z,z')$, where $|\cdot|$ denotes the length associated to $\Z$.



The main idea behind our estimator is to first compute a refinement of the input point cloud in order to remove its pathological elements (w.r.t. the cover used for computing the estimator). These elements are the so-called {\it element-crossing edges}, defined in Section~\ref{sub:calib_risk}. Then, our estimator is defined as the standard Mapper estimator for this refined point cloud. 
We first introduce a raw version of the estimator without calibrating the parameters. 
This calibration is then detailed further and allows to provide a risk bound for our corresponding estimator. 

\subsection{A Mapper-based estimator}
\label{sec:stochmapper}


In this section, we introduce our Mapper based estimator in a deterministic setting.
Assume that two point clouds $\hX_n$ and $\hZ_n$ are given: $\hX_n = (x_1, \dots, x_n)$ and $\hZ_n = (z_1, \dots,z_n)$ such that for any $i$, one has $(x_i,z_i) \in \X \times \Z$ and $z_i = \hat f (z_i)$. The function $\hat f :  \hX_n \rightarrow \Z$ is an approximation of a ``true" and unknown filter function $f:\mathcal X\rightarrow\Z$. 
In some settings, the true exact filter $f$ is known and then we simply have $\hat f = f\vert_{ \hX_n}$.


\paragraph{Point cloud and embedded graph.} We let $G_{\delta}$ be the (metric) neighborhood graph 
built on top of $\hX_n$ with parameter $\delta$, that is, any pair $\{x_i,x_j\}\subseteq\hX_n$ creates an edge 
in $G_\delta$, with length $d_\X(x_i,x_j)$, and parameterized with a shortest path between $x_i$ and $x_j$, if and only if $d_\X(x_i,x_j)\leq\delta$. We then define the corresponding {\em embedded graphs} as $G^{\Z}_\delta$ (resp. $\hat G^{\Z}_\delta$) in $\Z$, 
with vertices $\{f(x_i)\,:\,x_i\in \hX_n\}$ (resp. $\{\hat f(x_i)\,:\,x_i\in \hX_n\}$) and 
whose edges are geometric realizations of edges of $G_\delta$, that is, shortest paths in $\Z$\footnote{Our construction actually works for arbitrary paths. However, some quantities that are necessary for computing the estimator (such as $\ell$) might be easier to compute when working with shortest paths, so we stick to those paths in this article. }.  When $\Z$ is a normed vector space (such as a Banach space), this corresponds to linear interpolations in $\Z$. 
We finally extend $f\vert_{ \hX_n}$ (resp. $\hf$) to a function 
$\fpl:G_\delta\rightarrow G^{\Z}_\delta$ (resp. $\hfpl:G_\delta\rightarrow \hat G^{\Z}_\delta$), which maps the interiors
of the edges of $G_\delta$ to the corresponding interiors of the shortest paths in $\Z$.
Note that $f|_{\hX_n}$ and $\fpl$ (resp. $\hf$ and $\hfpl$) coincide on $\hX_n$, so we will only use $\fpl$ (resp. $\hfpl$) when applied to (interiors of) edges of $G_\delta$.   
 
\paragraph{Graph refinement.} Our Mapper-based estimator is defined as the standard Mapper computed on 
a refinement of the graph $G_\delta$. For $s \in \mathbb N^*$, we subdivide each edge of $G_\delta$ with $s$ points. 
Let $G_{\delta,s}$ be the resulting graph (on which $\fpl$ and $\hfpl$ are still well-defined), $\hX_{n,s}$ be the refined point cloud, and $G^{\Z}_{\delta, s}$, $\hat G^{\Z}_{\delta, s}$ be 
the refined embedded graphs. 

 

\paragraph{Cover.} Let $\U$ be a finite cover of $\im(\hfpl)$, which can be data dependent. For now, we assume that this cover is given.  We will discuss the construction of $\U$ further in this article.


\paragraph{Estimator.} We are now in position to define our Mapper based estimator. It is defined with~(\ref{eq:mappergraph}) as:
\begin{equation} \label{def:estimator}
   \mapper_n=\mapper_{\hfpl,\U, G_{\delta,s}}(\hX_{n,s}),
\end{equation}
and it can  be equipped with the pseudometric $\tilde d_{\hfpl,\U}$ of Definition~\ref{def:pseudomet-U}. See Figure~\ref{fig:estimator} for an illustration of the construction of $\mapper_n$. For defining the estimator, we need to choose the scale parameters $s$ and $\delta$, and we discuss this question further in the next section.

\begin{figure}[h]
    \centering
    \includegraphics{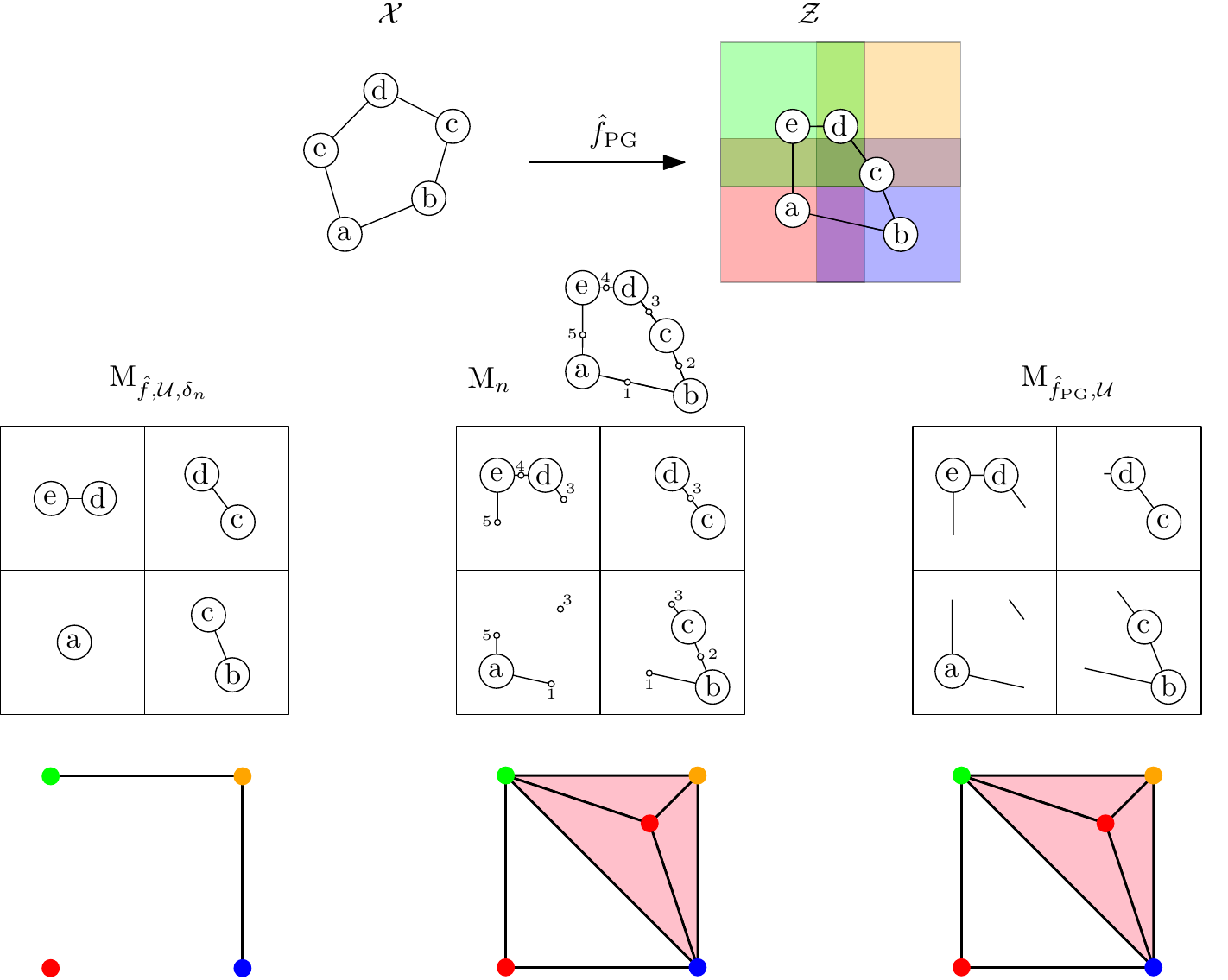}
    \caption{Example of our estimator $\mapper_n$ on a dataset of 5 points. Upper left: Dataset $\hX_n$ with five points $a,b,c,d,e$. The edges between the points are computed with a neighborhood graph with parameter $\delta_n$. Upper right: Cover of ${\rm im}(\hfpl)$ with four squares. The edges between the points in $\Z$ are shortest paths in $\Z$. Lower left: Preimages of the four squares for the standard Mapper on the point cloud and corresponding simplicial complex computed with hierarchical clustering with parameter $\delta_n$. Lower right: Preimages of the four squares for the standard Mapper on the metric neighborhood graph. Lower middle: Our estimator is computed by refining the neighborhood graph (with five extra nodes $1,2,3,4,5$) and by using this new graph to compute the connected components and the intersections between them.}
    \label{fig:estimator}
\end{figure}


\subsection{Risk bound and parameter calibration}
\label{sub:calib_risk}

We now give our main result in the {\em Stochastic Filter setting}, i.e., when the data $X_1,\dots, X_n$ are sampled i.i.d. from a distribution $P$ and when the function $\hat f :  \hX_n \rightarrow \Z$ is allowed to be data dependent. In this setting the $Z_i$'s are thus also i.i.d. random variables. 
\begin{itemize}
    \item {\bf (H1) Support Assumption.} The support $\X$ of $P$ is a compact 
    submanifold $\X\subseteq \R^D$ with 
    positive 
    reach 
    $\reach(\X)$ 
    (see~\cite{Boissonnat2019b} for definitions).
 \end{itemize}    
The neighborhood graph $G_\delta$ is built with Euclidean norm $\|\cdot \|$ in $\R^D$. Let $D_\X < \infty$ denote the diameter of $\X$ (in the Euclidean distance). Hereafter, we call 
$\reach(\X)$ and $D_\X$ the {\it geometric parameters of  $\X$}.

\begin{itemize}
    \item {\bf (H2) Measure Assumption.} The probability measure $P$ is $(a,b)$-standard, 
    i.e., $P(B(x,r))\geq{\rm min}\{1, ar^b\}$, for all $x\in\X$ and $r>0$, 
    where $B(x,r)=\{y\in\R^D\,:\,\|y-x\|\leq r\}$.
 \end{itemize}

By assumption, the filter of a Reeb graph is a continuous function. The filter function $f$ is thus uniformly continuous on the compact set $\X$ and it admits a minimal modulus of continuity $\omega_f$. The function $\omega_f$ is a non-decreasing function such that for any $u \in \R^+$,
$$ \omega_f(u) = \sup \left\{ d_{\Z}(f(x),f(x')) \, : \, 
  (x,x')\in \X^2 \text{ and } \| x - x' \| \leq u  \right\}.
$$ 
The domain $\X$ being compact, $\omega_f$  satisfies (see for instance Section~6 in \cite{DeVore93})
\begin{enumerate}
\item $ \omega_f(\delta)  \rightarrow \omega(0) = 0$  as $ \delta \rightarrow 0$;
\item $\omega_f$ is non negative and non-decreasing on $\R^+$;
\item $\omega_f$ is subadditive : $\omega_f(\delta_1 + \delta_2) \leq \omega_f(\delta_1) +\omega_f(\delta_2) $ for any $\delta_1$, $\delta_2 >0$;
\item $\omega_f$  is continuous on $\R^+$.
\end{enumerate}
In this article, we say that a function $\omega$ defined on $\R^+$ is \emph{a modulus of continuity} if it satisfies the four properties above 
and we say that  it is \emph{a modulus of continuity for $f$} if, in addition, we have
\begin{equation*}
 |f(x) -  f(x')| \leq \omega(\|x - x'\|),
\end{equation*}
for any $x,x' \in \X$. 
\begin{itemize}
    \item {\bf (H3) Filter Regularity Assumption.} The true filter $f:\X\rightarrow\mathcal \Z$ is a continuous function on $\X$ which admits a modulus of continuity $\omega$ such that $x \in \R^+ \mapsto \frac{\omega(x)}x$ is a non-increasing function on $\R^+$. 
 \end{itemize}

Finally,  we will assume the following assumption on the cover:
\begin{itemize} 
    \item {\bf (H4) Cover Assumption.} The cover $\U$ is assumed to cover $\im(\hfpl)$. 
 \end{itemize}  
 
\medskip 

For calibrating  the estimator parameters, we need to introduce the notion of \emph{element-crossing edges}. Such edges are pathological in the sense that they  may prevent to recover the correct topology of the underlying Reeb space. Given a simplex $\sigma=\{U_{\alpha_1},\dots,U_{\alpha_p}\}$ in the nerve $\mathcal N(\U)$, we let $U_\sigma=\cap_{i=1}^p U_{\alpha_i}$. 

\begin{definition} Let $(X_i,X_j) \in\hX_n^2$ such that the edge $e = (X_i,X_j)$ belongs to $G_\delta$. 
Let $\hfpl(e)$ be the corresponding edge in $\hat G^{\Z}_\delta$.
We say that $e$ is an \emph{element-crossing edge  with respect to the cover $\U$} if 
there exists $\sigma\in\mathcal N(\U)$ such 
that $\hfpl(e)\cap U_\sigma\neq\emptyset$, $\hf(X_i)\not\in U_\sigma$ and $\hf(X_j)\not\in U_\sigma$. 
\end{definition}
In other words, $e$ is an element-crossing edge  with respect to the cover $\U$ if the shortest path $\hfpl(e)$ goes through $U_\sigma$, even though its endpoints $\hf(X_i)$ 
and $\hf(X_j)$ are outside $U_\sigma$. In this case we say that $U_\sigma$ is \emph{crossed} by $e$. Note that element-crossing edges are generalizations of \emph{interval} and \emph{intersection-crossing edges}, 
as defined in~\cite{Carriere2017}.
We then define:
    $$\ell(\hX_n,\hf,\U)={\rm inf}\left\{|\tilde e | \, , \, \text{where $\tilde e$ is a c.c. of $\hfpl(e)\cap \U_\sigma$, $e$ is element-crossing and $U_\sigma$ is 
    crossed by $e$} \right\},$$
     where $|\cdot|$ denotes the length in $\Z$ and c.c. is a shorthand for connected component. In other words, $\ell(\hX_n,\hf,\U)$ is the length of the smallest connected path in the intersection between an edge of $\hat G^{\Z}_\delta$ and a cover element or intersection, such that the edge endpoints do not belong
    to this cover element or intersection.
    
\medskip

For calibrating the parameters, we also need to introduce the modulus of continuity of $\hfpl$: 
$$ \hat \omega_{\rm PG}(h) = \sup \left\{d_\Z (\hfpl(x),\hfpl(x')) \,:\,  
\|x-x'\| \leq h\textrm{ and }x,x'\textrm{ belong to the same edge of }G_\delta  
\right\},
$$
where $|\cdot|$ denotes the edge length in $G_\delta$.

\medskip

We are now in position to define the calibrations for $\delta$ and $s$. We follow a similar strategy as in \cite{Carriere2018a}. Let $d_{\rm H}^E$  denotes the Hausdorff distance~\cite{Burago2001} 
computed with Euclidean distances.
 \begin{itemize}
    \item  {\bf Choice for $\delta$.} For some arbitrary $\beta>0$, let $s(n)=n/({\rm log}(n))^{1+\beta}$. We take
\begin{equation}
    \label{eq:deltan}
\delta = \delta_n=d^E_{\rm H}( \tilde \hX_{s(n)},\hX_n) \: ,
\end{equation}
where $\tilde \hX_{s(n)}$ is a random subsample of size $s(n)$ drawn uniformly from $\hX_n$ with replacement.
    \item  {\bf Choice for $s$.} Let $\ell  =  \ell(\hX_n,\hf,\U)$, we take
\begin{equation}
    \label{eq:sn}
    s \geq s_n := \left\lfloor \frac{\delta_n}{\hat{\omega}_{\rm PG}^{-1}(\ell/2)}\right\rfloor \text{ if } \ell/2\in{\rm  im}(\hat\omega_{\rm PG})
    \end{equation}
   and $s_n=0$ (that is, we do not refine $G_\delta$) otherwise. By convention, we also  let $s_n=+\infty$ if $\ell =0$, which happens with null probability.
\end{itemize}

Under the previous assumptions and with the definitions of $s_n$ and $\delta_n$ given above, we can provide the following risk bound of our Mapper based estimator:
\begin{theorem}\label{th:stochmapper}
Under assumptions {\rm (H1)}, {\rm (H2)}, {\rm (H3)} and  {\rm (H4)},  the following inequality is true: 
\begin{equation*} \label{eq:general_risk_bound}
    \E\left[d_{\rm GH}((\mapper_{n},\tilde d_{\hfpl,\U}), 
    (\reeb_f(\X),\tilde d_f))\right] \leq 5 \E\left[\res(\U,\hfpl)\right] + 
    C \omega\left(C'\frac{{\rm log}(n)^{(2+\beta)/b}}{n^{1/b}} \right) +
    2 \E\left[\|\fpl-\hfpl\|_\infty\right],
\end{equation*}
where the constants $C,C'$ only depends on $a$, $b$ and the geometric parameters of $\X$, and
where the third term is defined with 
$\|\fpl-\hfpl\|_\infty := {\rm sup}_{x\in\Gn}d_\Z(\fpl(x), \hfpl(x))$.
\end{theorem}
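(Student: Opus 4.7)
My plan is to apply a triangle inequality for the Gromov--Hausdorff distance through two intermediate spaces, so that each of the three resulting terms matches one of the summands in the bound. Concretely, I interpolate
$$(\mapper_n, \tilde d_{\hfpl, \U}) \longrightarrow (\reeb_{\hfpl}(G_{\delta_n}^\Z), \tilde d_{\hfpl}) \longrightarrow (\reeb_{\fpl}(G_{\delta_n}^\Z), \tilde d_{\fpl}) \longrightarrow (\reeb_f(\X), \tilde d_f),$$
and aim to bound the three pieces by $5\,\res(\U, \hfpl)$, $2\,\|\fpl - \hfpl\|_\infty$, and $C\,\omega(\delta_n)$ respectively, before taking expectations.

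The first term hinges on a refinement lemma: the calibration $s \geq s_n$ from~(\ref{eq:sn}) enforces that every sub-edge of $G_{\delta_n, s_n}$ has length at most $\hat{\omega}_{\rm PG}^{-1}(\ell/2)$, so its $\hfpl$-image has diameter at most $\ell/2$. Since by definition every connected component appearing in the infimum defining $\ell(\hX_n, \hf, \U)$ has length at least $\ell$, no sub-edge can cross a simplex $U_\sigma$ without one of its two endpoints lying inside $U_\sigma$. Consequently, the single-linkage Mapper $\mapper_n$ coincides, as a simplicial complex equipped with $\tilde d_{\hfpl, \U}$, with the continuous Mapper $\mapper_{\hfpl, \U}(G_{\delta_n}^\Z)$, and Theorem~\ref{thm:convdgh} applied to the continuous length space $G_{\delta_n}^\Z$ delivers the $5\,\res(\U, \hfpl)$ bound.

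For the middle term, Proposition~\ref{prop:spacereeb} lets me replace each Reeb space by its underlying carrier $G_{\delta_n}^\Z$ endowed with the respective filter-based pseudometric; the identity correspondence then has distortion at most $2\,\|\fpl - \hfpl\|_\infty$, since along any continuous path the maximum and minimum of the two filter images each shift by at most $\|\fpl - \hfpl\|_\infty$. The third term is analogous: Proposition~\ref{prop:spacereeb} reduces it to bounding the GH distance between $(G_{\delta_n}^\Z, d_{\fpl})$ and $(\X, d_f)$. Under (H1), once $\delta_n<\reach(\X)$, the closest-point projection $\rho: G_{\delta_n}^\Z \to \X$ is well defined and close to the identity by standard manifold-inference arguments; I will use the correspondence $\{(x, \rho(x)) : x \in G_{\delta_n}^\Z\}$, pushing paths in $G_{\delta_n}$ forward through $\rho$ and conversely discretizing paths in $\X$ along $\hX_n$. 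Each edge has Euclidean length at most $\delta_n$, so the discrepancy between the images of $f$ and $\fpl$ along matched paths is controlled by $\omega(\delta_n)$, producing the $C\,\omega(\delta_n)$ bound.

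Taking expectations and combining, assumption (H2) together with standard Hausdorff-convergence results for $(a,b)$-standard measures give $\E[\delta_n] \lesssim \log(n)^{(2+\beta)/b}/n^{1/b}$, the extra logarithmic factor coming from the subsampling step $s(n)=n/\log(n)^{1+\beta}$. Assumption (H3), which makes $\omega(x)/x$ non-increasing and hence $\omega$ above every chord through the origin (a ``star-shaped'' property sufficient for a Jensen-type inequality), is precisely what is needed to commute $\E$ with $\omega$ and obtain the final bound $C\,\omega(C'\log(n)^{(2+\beta)/b}/n^{1/b})$. The main obstacle I anticipate is the graph-to-manifold step: paths in $\fpl(G_{\delta_n}^\Z)$ are piecewise geodesic interpolations of $\hf$ inside the ambient space $\Z$, whereas paths in $f(\X)$ follow the true geometry of $\X$, and controlling their filter-based diameters simultaneously within $\omega(\delta_n)$ requires using both the reach of $\X$ and the fact that $\hX_n$ is a sufficiently dense net in $\X$ to make $\rho$ well-behaved on every edge.
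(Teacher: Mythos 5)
Your proposal follows essentially the same route as the paper's proof: the refinement calibration $s\geq s_n$ is used to identify $\mapper_n$ with the continuous Mapper of the graph $G_{\delta_n}$ (the paper's Lemma~\ref{lem:approxmapper}), Theorem~\ref{thm:convdgh} gives the $5\,\res(\U,\hfpl)$ term, the identity correspondence gives $2\|\fpl-\hfpl\|_\infty$, a correspondence between the graph and $\X$ built from the density of $\hX_n$ and the reach gives the $\omega(\delta_n)$ term, and $(a,b)$-standardness controls the expectation (the paper's Lemmas~\ref{lem:approxreeb} and~\ref{lem:EspOmega}). The only sub-details where you diverge are cosmetic: the paper uses nearest-sample-point maps $\zeta$ rather than the closest-point projection, bounds $\E[\omega(2\delta_n)]$ by tail integration rather than a Jensen-type step, and explicitly isolates the bad event $\{d_{{\rm H},n}>\delta_n/6\}\cup\{\delta_n>2\,\reach(\X)\}$ via a $\proba(\Omega^c)\,\omega(D_\X)$ term, which you only implicitly acknowledge.
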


The proof is given Appendix~\ref{app:bound}. Note that this result is very general and can handle random or data dependent covers for instance. 
We discuss cover choices and corresponding upper bounds on their resolutions in Section~\ref{sec:covcont}.
Moreover, even though the third term might be difficult to control for general length spaces, when $\Z$ is a Hilbert space  it actually reduces to $$\E\left[\|\fpl-\hfpl\|_\infty\right]=\E\left[\|(f-\hf)|_{\hX_n}\|_\infty\right],$$
since shortest paths are straight lines in such spaces. This is the case for instance when $\Z$ is a reproducing kernel Hilbert space (RKHS), which we study in more details further in Section~\ref{sec:kpca}.
 

\paragraph{Parameter calibrations.} Theorem~\ref{th:stochmapper} relies on the calibration of the parameters of our Mapper-based estimator $\mapper_n$. In particular, the choice we make for the graph refinement parameter $s$ requires to: first, upper bound the modulus of continuity $\hat{\omega}_{\rm PG}$ of $\hfpl$, and second, to compute the smallest connected path $\ell(\hX_n,\hf,\U)$. Controlling $\hat{\omega}_{\rm PG}$  is not possible in general, but for standard filters such as KPCA filters (see Section~\ref{sec:kpca}), $\hf$ and $\hfpl$ are Lipschitz functions and hence $\hat{\omega}_{\rm PG}$ can be easily bounded by the corresponding Lipschitz constant.  Next, computing---or at least lower bounding---the quantity $\ell(\hX_n,\hf,\U)$ is difficult for a general cover $\U$. However, it can be done exactly for particular covers, such the ones induced by thickening $K$-means or Voronoi 
partitions in Hilbert spaces (see Section~\ref{sec:covcont}). Indeed, in this case, it is possible  to test whether a given shortest path intersects a cover element 
or intersection by computing the intersection of the line induced by the shortest path---which is possible since shortest paths are segments---and all the mediator lines that 
form the boundary of the cover element. 

In practice, when $\hat{\omega}_{\rm PG}$ and $\ell(\hX_n,\hf,\U)$ are difficult to compute, we adopt a conservative approach by 
considering  for the graph refinement parameter $s$  the largest possible integer that still allows our estimator to be computed with a reasonable amount of time and memory usage, depending on the machine that 
is being used. Finally, it should be noted that small sizes of cover elements or intersections induce small 
$\ell$ and large $s$, and thus potentially longer computation times.

\subsection{Cover control}
\label{sec:covcont}
 
In this section, we study the resolutions of covers induced by Voronoi partitions. In particular, we define those covers in Section~\ref{sec:vorpart}, and provide upper bounds on their resolutions in Section~\ref{subs:reso}. This allows to formulate more explicit upper bounds for the first term in Theorem~\ref{th:stochmapper}. 

\subsubsection{Defining covers}\label{sec:vorpart}

Covers with hypercubes  is the most common cover used with Mappers when $\mathcal Z = \R^\esd$ when $\esd$ is not too large. When the filter domain $\Z$ is a general length space, as for instance when $\Z$ is the space of probability distributions of $\R$ (see Section~\ref{sec:single}) or when $\Z$ is a space of combinatorial  graphs (see Section~\ref{sec:metricex}), we need an alternative construction to define covers.
A simple way of generating a cover is by using a partition of this space, and thickening the elements of this partition.
\begin{definition}
Let $(\Z, d_{\Z})$ be a length space, and let $\epsilon > 0$.
\begin{itemize}
    \item For $U \subseteq \Z$   a subset of $\Z$, the {\em $\epsilon$-thickening} of $U$ is defined as 
$U^\epsilon=\{z\in\mathcal Z\,:\,{\rm inf}\{d_{\Z}(z,\tilde z)\,:\,\tilde z \in U\}\leq\epsilon\}$. 
\item Let $\widetilde{\mathcal Z}$ be a subset of $\Z$ and let $\mathcal U=\{U_\alpha\}_{\alpha\in A}$ be a partition of $\widetilde{\mathcal Z}$, i.e., $\widetilde{\mathcal Z} \subseteq \bigcup_{\alpha\in A} U_\alpha$ and $U_\alpha\cap U_\beta=\emptyset$ for all $\alpha\neq\beta\in A$. The {\em $\epsilon$-thickening} of $\mathcal U$ for covering $\widetilde{\mathcal Z}$ is defined as 
$\mathcal U^\epsilon = \{U_\alpha^\epsilon\}_{\alpha \in A}$.
\end{itemize}
\end{definition}

Even when $\Z=\R^\esd$, it might be interesting  to use thickenings of partitions instead of hypercube covers, since the number of hypercubes increases 
exponentially with the dimension $\esd$, with many of them having an empty preimage under $f$, and thus useless. 

Note also that when our Mapper-based estimator is used with an $\epsilon$-thickening cover, our estimator gets more difficult to compute when the thickening parameter $\epsilon$ goes to zero, since it requires refining the initial neighborhood graph with a lot of new vertices.  

\subsubsection{Bounding the resolution for  $\epsilon$-thickening of Voronoi partitions} \label{subs:reso}

Under the same general assumptions as Section~\ref{sub:calib_risk}, we consider the specific case where $\mathcal Z  = \R^\esd$ endowed with the inner product $\langle \cdot,\cdot \rangle $. Partitions of $\R^\esd$ can be computed very efficiently, for instance with Voronoi partitions 
and the $k$-means algorithm. In this section we give an upper bound on the resolution term involved in the upper bound of Theorem~\ref{th:stochmapper}, for a cover computed from a $k$-means algorithm.

\begin{definition}
For $Q$ a measure in $\R^\esd$ and $k \in \mathbb N^*$, a set $t(Q)$ of $k$ points in $\R^\esd$ is said to be $k$-optimal for $Q$ if 
\begin{equation*}
    t(Q) \in \underset{t \in (\R^\esd)^k}{\rm argmin} \int_{\mathcal Z} \min_{i=1, \dots, k} \| z - t_i  \|_2^2 \, d Q(z).
\end{equation*}   
\end{definition}
Let $P_n^{\hat f}$ be the push forward measure of $P_n$ by the stochastic filter function $\hat f$, where $P_n$ is the empirical measure associated to $\hX_n$. Of course, all these quantities are defined conditionally to the sample $\hX_n$ and $\hat f$ (in particular if $\hat f$ depends on other observations $\mathbb Y_n$, as would be the case, e.g., in the context of a regression function filter). Note that $P_n^{\hf}$ is equivalently defined as the empirical measure  corresponding to the observation of the sample $\hZ_n = \hf(\hX_n)$. The $k$-means algorithm on $\hZ_n$ aims at approximating an optimal $k$ points for the empirical measure $P_n^{\hat f}$ from the observation  $\hZ_n$.

Let $\hat t = t(P_n^{\hat f})$ be an optimal $k$-points  for the empirical measure $P_n^{\hat f}$. We denote by $\widehat{\mathcal U}^\epsilon = \{\hat U_j^\epsilon  \}_{j = 1, \dots, k}$ the $\epsilon$-thickening of the Voronoi partition associated to $\hat t$. 
Since $\mathcal Z  = \R^\esd$, we know that $\hfpl$ is a linear interpolation between the  $Z_i$'s. Thus $\widehat{\mathcal U}^\epsilon$ is a cover for $\im(\hfpl)$ and   Assumption (H4) is satisfied. 

We give our result  under the additional assumption that the minimal modulus of continuity $\omega$ is upper bounded by a concave function of the form  $\bar{\omega}(u) =  c u^\gamma$, with $c\geq 0$ and $0<\gamma\leq 1$. This assumption is obviously stronger than Assumption (H3).
\begin{itemize}
\item {\bf (H5) Power function upper bounds  $\omega$.} There exists $\gamma \in (0,1]$ and $c \in \R^+$ such that for any $u \in \R^+$, $\omega(u) \leq  \bar{\omega}(u) =  c u^\gamma$.
\end{itemize}
This technical assumption allows us to provide a  simple  upper bound. Note  that it makes sense because on the compact set $\X$, the minimal modulus of continuity $\omega$ is indeed a concave function.  
The next result gives a control on the resolution of $\widehat{\mathcal U}^\epsilon$ in $\R^\esd$ with respect to the filter function $\hfpl$. 
\begin{theorem}
\label{theo:resolution}
Under assumptions {\rm (H1)}, {\rm (H2)} and {\rm (H5)},  for $\mathcal Z  = \R^\esd$ and for $k\leq \frac n {\esd+2}$,  the resolution of the cover $ \widehat{\mathcal U}^\epsilon $ in $\R^\esd$  with respect to the filter function $\hfpl$ satisfies
\begin{equation} \label{res:theo}
\E\left[\res(\widehat{\mathcal U}^\epsilon,\hfpl)\right] \leq  
C_1    \left[ k^{-\frac{2 \gamma^2}{b^2+ 2 \gamma b}} +  \left( \frac {k \esd }{n} \right) ^{\frac \gamma{2b+4\gamma}}  +   \E \| (f - \hat f)\vert_{\hX_n}  \|_\infty  \right]     +   2 \varepsilon  .
\end{equation}
Consequently, the following risk bound holds for our Mapper based estimator $\mapper_n=\mapper_{\hfpl,\widehat{\mathcal U}^\epsilon, G_{\delta,s}}(\hX_{n,s})$:
\begin{equation} \label{mapper-res:theo}   \E\left[d_{\rm GH}((\mapper_{n},\tilde d_{\hfpl,\widehat{\mathcal U}^\epsilon}), 
    (\reeb_f(\X),\tilde d_f))\right] \leq
    C_2    \left[ k^{-\frac{2 \gamma^2}{b^2+ 2 \gamma b}} +  \left( \frac {k \esd }{n} \right) ^{\frac \gamma{2b+4\gamma}}  +   \E \| (f - \hat f)\vert_{\hX_n}  \|_\infty  \right]     +   10 \varepsilon  .
\end{equation}
Moreover, the constants $C_1$ and $C_2$ depends on $a$, $b$, $c$, $\gamma$,  $\|f\|_{\infty}$   and on the geometric parameters of $\X$.   
\end{theorem}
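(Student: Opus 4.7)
The plan is to first establish the resolution bound (\ref{res:theo}) and then deduce (\ref{mapper-res:theo}) by substituting into Theorem~\ref{th:stochmapper}. Write $q(z) = \min_{l=1,\dots,k}\|z-\hat t_l\|$ for the nearest-center distance in $\R^\esd$. The first step is a purely geometric reduction: for any $j$ and any $z,z'\in\hat U_j^\epsilon\cap\im(\hfpl)$, replace each one by an $\epsilon$-close witness $\tilde z\in\hat U_j$ (for which $\|\tilde z-\hat t_j\|=q(\tilde z)$); two triangle inequalities then yield $\res(\widehat{\mathcal U}^\epsilon,\hfpl)\leq 4\epsilon + 2\sup_{w\in\im(\hfpl)}q(w)$. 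Since $\R^\esd$ is Hilbert, $\hfpl$ sends each edge of $G_\delta$ to a straight segment between two points $Z_i,Z_j\in\hZ_n$, so any $w$ on it satisfies $q(w)\leq\max(q(Z_i),q(Z_j))+\tfrac12\|Z_i-Z_j\|$. By (H3) and (H5), the edge length is at most $\omega(\delta_n)+2\|(f-\hf)|_{\hX_n}\|_\infty \leq c\,\delta_n^\gamma+2\|(f-\hf)|_{\hX_n}\|_\infty$; taking expectations and using Jensen together with the calibration $\E[\delta_n]=O((\log n/n)^{1/b})$ shows that this contribution is $O((\log n/n)^{\gamma/b})$, which is ultimately absorbed in the statistical term below.

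The core of the argument is to convert an $L^2$ $k$-means rate into an $L^\infty$ one using (H2). The key geometric observation is that whenever $q(z_0)\geq r$ at some $z_0\in\im(\hfpl)$, no center lies in $B(z_0,r/2)$, so this ball contributes at least $(r/2)^2\cdot P_n^{\hf}(B(z_0,r/2))$ to the $L^2$ quantization cost $E_k:=\E_{W\sim P_n^{\hf}}[q(W)^2]$. Transporting (H2) through the approximately $\gamma$-H\"older map $\hf$ (at the price of an additive $\|(f-\hf)|_{\hX_n}\|_\infty$) shows that $P^{\hf}$ satisfies an $(a',b/\gamma)$-standard condition in $\R^\esd$; writing $b'=b/\gamma$, a VC-concentration bound for balls lifts this to $P_n^{\hf}$ and yields
$$\sup_{z\in\im(\hfpl)} q(z)\ \lesssim\ (E_k/a')^{1/(b'+2)}\ =\ E_k^{\,\gamma/(b+2\gamma)}.$$

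For $E_k$ itself I would use the classical decomposition into approximation and estimation. The approximation part, the optimal $L^2$ $k$-quantization rate for an $(a',b')$-standard measure, is $O(k^{-2/b'})=O(k^{-2\gamma/b})$; the estimation part, i.e.\ replacing $P^{\hf}$ by $P_n^{\hf}$ in the $k$-means functional, is $O(\sqrt{k\esd/n})$ under $k\leq n/(\esd+2)$ (e.g.\ Biau--Devroye--Lugosi). Using subadditivity $(x+y)^\alpha\leq x^\alpha+y^\alpha$ for $\alpha=\gamma/(b+2\gamma)\in(0,1)$ and feeding each term through the conversion gives exactly
$$(k^{-2\gamma/b})^{\gamma/(b+2\gamma)} = k^{-2\gamma^2/(b^2+2\gamma b)} \quad\text{and}\quad (\sqrt{k\esd/n})^{\gamma/(b+2\gamma)} = (k\esd/n)^{\gamma/(2b+4\gamma)},$$
which, together with the two reductions of the first paragraph, proves (\ref{res:theo}); the term $\|(f-\hf)|_{\hX_n}\|_\infty$ accumulates from the pushforward comparison and the edge-length bound. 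Then (\ref{mapper-res:theo}) follows by invoking Theorem~\ref{th:stochmapper}: the coefficient $5$ on $\E[\res]$ produces $5C_1[\cdots]+10\epsilon$; the term $C\,\omega(C'(\log n)^{(2+\beta)/b}/n^{1/b})$ is $O((\log n/n)^{\gamma/b})$ under (H5) and is absorbed in $(k\esd/n)^{\gamma/(2b+4\gamma)}$ since $\gamma/b>\gamma/(2b+4\gamma)$; and the third term, in the Hilbert space $\R^\esd$, reduces to $2\E[\|(f-\hf)|_{\hX_n}\|_\infty]$, which already appears in the bracket.

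The main obstacle will be the transport of (H2) through the approximate H\"older map $\hf$: one has to convert the $(a,b)$-standard condition on $P\subset\R^D$ into a usable standard condition for $P_n^{\hf}$ on $\im(\hf)\subset\R^\esd$ with explicit constants depending only on $a,b,c,\gamma$ and the geometric parameters of $\X$, while cleanly separating the filter-estimation contribution $\|(f-\hat f)|_{\hX_n}\|_\infty$ from the purely geometric quantization rates so that the H\"older exponent $\gamma$ enters both exponents $2\gamma^2/(b^2+2\gamma b)$ and $\gamma/(2b+4\gamma)$ exactly as prescribed.
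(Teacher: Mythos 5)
Your overall architecture is the same as the paper's: reduce the resolution to (twice) the supremum of the nearest-center distance over filter values, plus thickening, edge-length and filter-error corrections (the paper's Lemma~\ref{lem_res_Delta}); convert that supremum into an $L^2$ quantization cost via the $(a,b)$-standardness transported through the modulus of continuity (Lemmas~\ref{lem:CCtrick} and~\ref{lem:Pf_standard}); bound the $L^2$ cost by an approximation term $k^{-2\gamma/b}$ plus an estimation term $\sqrt{k\esd/n}$ (Lemma~\ref{lem_Pf_t_star} and Proposition~\ref{prop:procem} --- you outsource the latter to Biau--Devroye--Lugosi where the paper proves it via Dudley's integral and fat-shattering dimensions, an inessential difference); then feed the result into Theorem~\ref{th:stochmapper} and absorb the $\omega\bigl(C'\log(n)^{(2+\beta)/b}/n^{1/b}\bigr)$ term, exactly as the paper does. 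The one genuine divergence is the $L^\infty$-to-$L^2$ conversion: you run it under the \emph{empirical} pushforward $P_n^{\hf}$ and therefore need a VC-type relative concentration bound for balls to get an ``empirical standardness'', together with a threshold radius and a bad-event contribution to make the bound usable in expectation --- steps you invoke but do not carry out. The paper sidesteps all of this: since its supremum $\Delta$ is attained at a true-filter value $f(X_i)$ of a sample point, the ball $B(\hat z,\Delta/2)$ is centered on $\im(f)$, and a one-line Markov argument under the \emph{population} pushforward $P^f$ (standard by (H2)+(H5) through $\omega$) yields $\Delta\lesssim (P^f d_{\hat t}^2)^{\gamma/(b+2\gamma)}$ with no concentration needed; the price is only the uniform deviation bound on the $k$-means functional, which you need anyway. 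Your route can be completed, but two points require care: (a) the regularity assumptions concern $f$ only, and $P^{\hf}$ does not exist as a population object ($\hf$ is defined on $\hX_n$), so the standardness must be channelled through $P^f$ (or $P_n^f$) with the $\|(f-\hf)\vert_{\hX_n}\|_\infty$ shifts made explicit --- precisely the bookkeeping that the paper's definition $\Delta=\sup_i d_{\hat t}(f(X_i))$ streamlines; (b) your reduction pays $4\epsilon$, hence $20\epsilon$ after the factor $5$ of Theorem~\ref{thm:convdgh}, instead of the stated $2\varepsilon$ and $10\varepsilon$; to recover the stated constants, use that $\widehat{\mathcal U}^\epsilon$ thickens a partition of $\im(\hfpl)$ itself, so the $\epsilon$-close witness of any point of $\hat U_j^\epsilon$ already lies in $\im(\hfpl)$ and no second Lipschitz correction of the nearest-center distance is needed.
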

The proof of Theorem~\ref{theo:resolution} is given in Section \ref{sub:proof_resolution}, in which several ideas from \cite{brecheteau2020k} are reused and adapted. Note that we could also provide a  deviation bound on the resolution by applying the so-called Bounded Inequality in a standard way (see for instance Theorem 6.2 in \cite{boucheron2005theory}).

\paragraph{Rate of convergence.} Assuming that $\gamma$  and $b$ are known, we  can choose $k$ to balance the first two terms in the bracket of the right hand side of Inequality~\eqref{mapper-res:theo}. By taking $k $ of the order of $ \left( \frac {n}{\esd} \right) ^{\frac {b^2+2\gamma b}{(b+2\gamma )(4 \gamma  +b)}} $, we obtain that the first two  terms in the bracket are of the order of $ \varepsilon_n := \left( \frac {\esd}{n} \right) ^{\zeta}  $ with  $\zeta  := \frac {2\gamma^2}{(b+2\gamma )(4 \gamma  +b)} < \frac 14$.  If the convergence of $\hat f $ to $f$ is faster than  $\varepsilon_n$, and taking a resolution $\varepsilon$ of the order of $\varepsilon_n$, we finally obtain that the expected risk of our Mapper based estimator is of the order of $\varepsilon_n$. We conjecture that this rate  of convergence is not optimal, however it can be used to show the consistency of our Mapper-based estimator. 
 
\subsection{Application to KPCA filters} 
\label{sec:kpca}

In this section, we study the upper bounds of Theorem~\ref{th:stochmapper} in the particular case where $\Z$ is a reproducing kernel Hilbert space (RKHS).
Let $\Z$ be a RKHS  associated to a continuous kernel function $K$ defined on $\mathcal X \times \mathcal X$. The set $\mathcal X$ being compact and $K$ being continuous, $\mathcal Z$ is then a separable  RKHS. Moreover, the feature map $x \mapsto K(x,\cdot)  $ is a continuous function from $\mathcal X$ to $\mathcal Z$ since:
\begin{equation}
\label{continuityK}
\| K(x,\cdot) - K(x',\cdot)\|_{\Z}
^2 = K(x,x)+ K(x',x') - K(x,x')- K(x',x).
\end{equation} 
Moreover, the random variable $Z = K(X,\cdot)$ is bounded in $\Z$ since $ \| K(X,\cdot) \|_{\Z}
^2 = K(X,X)$ which is almost surely bounded on the compact space $\mathcal X$. In particular, $\E \left[\|  K(X,\cdot) \|_{\Z}^2 \right] < \infty $. In this setting, the covariance operator of the distribution of $Z$ is well defined (see for instance Section 2 and 4.1 in \cite{blanchard2007statistical}). 
To simplify, we will assume that the distribution of $Z$ is centered. 

\paragraph{Covariance operator.} Let $\Gamma = \E (Z \otimes Z^*)$ be the covariance operator and let $\Pi_\esd$ be the orthogonal projection operator on  the set of the first $\esd$  eigenvectors of $\Gamma$. The operator $\Gamma$ can be approximated by its empirical version:
$$ \Gamma_n = \frac 1n   \sum_{i=1}^n Z_i \otimes Z_i^*   $$ 
where $Z_i = K(X_i,\cdot)$.
Let $\hat \Pi_{n,\esd}$ be the orthogonal projection operator on  the set of the first $\esd$  eigenvectors of $\Gamma_n$.
In this section, we consider as filter functions the composition of the feature map with one of the two projection operators:
$$ f_\esd : x \in \mathcal X \mapsto \Pi_\esd\left(K(x,\cdot)\right) $$ 
and
$$ \hat f_{n,\esd} : x \in \mathcal X \mapsto \hat \Pi_{n,\esd}\left(K(x,\cdot)\right) .$$

\paragraph{Modulus of continuity.} Let $\omega_K$ be the modulus of continuity of $K$: for any $(x_1,x_2,x_1',x_2') \in \mathcal X^4$,
$$ \left| K(x_1,x_2) - K(x_1',x_2') \right| \leq \omega_{\tiny K}\left( \sqrt{\|x_1-x_1'\|  ^2 + \|x_2-x_2' \|  ^2} \right), $$
where  $\|\cdot \|$ is the euclidean norm of $\R^D$. Let $(x,x') \in \mathcal X^2$, then
\begin{eqnarray*}
\left\| f_\esd(x) - f_\esd(x') \right\|_{\Z}
&  =  &  \left\| \Pi_\esd\left( K(x,\cdot)\right) - \Pi_\esd\left(K(x',\cdot)\right) \right\|_{\Z} \\
&  \leq   &  \left\|   K(x,\cdot) -K(x',\cdot)   \right\|_{\Z} \\
&  \leq   &  \sqrt{2 \omega_K(\|x-x'\|) }
\end{eqnarray*}
where the last inequality comes from \eqref{continuityK}. This shows that $\sqrt{2 \omega_K} $ is a modulus of continuity for $f_\esd$.

\paragraph{Upper bound.} The statistical analysis of PCA in Hilbert spaces has been the subject of several works, see for instance \cite{reiss2020nonasymptotic, blanchard2007statistical, biau2012pca, shawe2005eigenspectrum}. Here, we need a control for the sup norm between the filter and its empirical version.  According to Theorem 2.1 in \cite{biau2012pca}: 
\begin{eqnarray*}
\E  \left[ \sup_{ z \in \Z \, , \, \| z\|_{\Z} \leq 1} \| \Pi_\esd(z) - \hat \Pi_{n,\esd}(z)  \|_{\Z} \right] \leq \frac{C}{\sqrt n} 
\end{eqnarray*}
where the constant $C$ only depends on $\esd$. Since $\mathcal X$ is compact and $x \mapsto K(x,\cdot)$ is continuous, it follows that:
\begin{eqnarray*}
\E   \left[ \sup_{ x \in \X } \| f_\esd(x) - \hat f_{n,\esd}(x)  \|_{\Z} \right] \leq \frac{C'}{\sqrt n} 
\end{eqnarray*}
where $C'$ depends on $D_{\mathcal X}$ and $\esd$. Under assumptions (H1), (H2) and (H5), if we perform a $k$-means algorithm in the space of  the $\esd$ first components of the KPCA to derive a cover as explained in Section~\ref{subs:reso}, we can then apply Theorem~\ref{theo:resolution} to our corresponding Mapper-based estimator.  The convergence of the estimated filter in $O(1/\sqrt n)$ is fast enough so that it does not slow down the convergence of our Mapper-based estimator.
For $k$ and $\varepsilon$ chosen as in the discussion following Theorem~\ref{theo:resolution},  we finally obtain that the risk of our Mapper based estimator can be upper bounded by a term of the order of $\left( \frac {\esd}{n} \right) ^{\frac {2\gamma^2}{(b+2\gamma )(4 \gamma  +b)}}$.
 

\section{Applications of Mapper in the Stochastic Filter setting}  
\label{sec:statmeanmapper}

In this section, we focus on examples and applications of the Stochastic Filter setting (see Section~\ref{sec:estim}), in which the filter $\hf$ used to compute the Mapper is assumed to be an estimation (computed from the data sample) of the true target filter $f$ used to compute the Reeb space. 
We first provide in Section~\ref{sec:MLmapp} various examples of stochastic filters in statistics and machine learning. Indeed, standard methods provide estimated regression functions and classification probability estimates which are interesting to study with Mapper. Then, we turn the focus to the length space of probability distributions in Section~\ref{sec:single}, 
and we finally provide an illustration for the length space of combinatorial graphs with the graph edit distance in Section~\ref{sec:metricex}. Throughout this section, the Mappers that are computed and discussed always refer to our Mapper-based estimator.

\subsection{Stochastic Filter in Statistical Machine Learning}
\label{sec:MLmapp}

In this section, we discuss the various potential applications of Mappers in statistical machine learning, in which the filter is often used for inference and prediction, and we provide associated numerical experiments and illustrations. We also refer the interested reader to~\cite{Hastie2003} for more details on the statistical and machine learning methods used in this section. \\

{\bf Stochastic real-valued filters.} We first consider a few applications in which the estimated and true target filters are real-valued functions, i.e., $\Z = \mathbb R$. In this setting, one can apply either the risk bound given in Theorem~\ref{th:stochmapper}  or the results from~\cite{Carriere2018a} to quantify the approximation and convergence of Mapper.  

\begin{itemize}
    \item {\bf Inference.} When the target filter function only depends on the measure $P$ itself, we can define estimators of this filter using the point cloud $\hX_n$ alone. For instance, a dimension reduction filter (e.g. PCA), the eccentricity filter or the density estimator filter are all estimators of underlying filters defined from $P$. See for instance \cite{Carriere2018a} for examples.
    
     \item {\bf Regression.} We now assume that we observe a random variable $Y_i$ at each point $X_i$: 
     \begin{equation}
     \label{eq:reg}
         Y_i = f(X_i) + \varepsilon_i, \quad i =1, \dots, n
     \end{equation}
     where the true filter is $f(x) = \E(Y|X=x)$, i.e., the regression function on $\X$ and $\varepsilon_i = Y_i  - f(X_i)$. Then, the Mapper of $\hX_n$ can be computed with any estimator $\hat f$ of $f$ (from the statistical regression literature) in order to infer the Reeb space $\reeb_f(\X)$. 
     \item {\bf Binary classification.}  We now assume that we observe a binary variable $Y_i\in \{-1,1\}$ at each point $X_i$ of the sample. Let $f(x) = P(Y=1|X=x)$ be the probability of class $1$ for any $x\in \X$. In this setting, inferring the target Reeb space $\reeb_f(\X)$ with a Mapper computed on $\hX_n$ for some estimator $\hat f$ of the class probability distribution (given by any machine learning classifier) would provide insights about how data is topologically stratified w.r.t. the confidence given by the classifier.
\end{itemize}


{\bf Extension to stochastic multivariate filters.} 
For many problems in statistical machine learning, the quantity of interest is actually a  multivariate quantity. In this setting, using Theorem~\ref{th:stochmapper}
allows to statistically control the quality of Mapper, which, to our knowledge, is new in the Mapper literature. 

\begin{itemize}
    \item {\bf Dimension reduction.} In this setting, a natural extension of real-valued inference described above is the projection onto the $\esd$ first directions of any dimension reduction algorithm. The corresponding Mapper is now a multivariate Mapper and the underlying filter is the projection onto the $\esd$ first directions of the covariance operator of $P$. See Section~\ref{sec:kpca} above.
     \item {\bf Multivariate regression.}  Multivariate regression is the generalization of (univariate) regression when the variable $Y$ in Equation~\eqref{eq:reg} is now a random vector. 
     \item {\bf Multi-class classification.}  We observe a categorical variable $Y_i \in  \{0,\dots,k\}$ at each point $X_i$. Let $f_k(x) = P(Y=k|X=x)$ be the probability of class $k$ at $x \in \X$. The underlying filter is now the vector of estimated probabilities $f = (f_0,\dots, f_k)$, which can be obtained with classification methods in statistical machine learning. 
\end{itemize}

{\bf Synthetic example.} We now describe two multi-class classification problems and display the corresponding Mappers. In the first one, we generated a data set in two dimensions with three different classes which are entangled with each other. See Figure~\ref{fig:blobs} (left) for an illustration. We then trained a Random Forest classifier on this data set, and computed the estimated probabilities for each of the training points, meaning that we have an estimated multivariate filter $\hf:\R^2\rightarrow [0,1]^3$. The corresponding Mapper (computed with $10$ intervals and overlap $30\%$ for each class) is shown in Figure~\ref{fig:blobs} (right). Moreover, the Mapper nodes are colored with the variance of the class probability distributions: the smaller the variance, the more confident the prediction. It is clear from the Mapper that the classifier induces a topological stratification of the data, in the sense that points in the middle of the space (located in the middle of the triangle-shaped Mapper), on which the classifier is unsure, connect with points for which the classifier hesitates between two classes (located in the middle of the  ``edges" of the triangle), which themselves connect with points where the classifier is confident (located at the  ``corners" of the triangle), leading to some non-trivial 1-dimensional topological features in the data, which are not visible at first sight on the data set. We believe this visualization could be of great help when it comes to interpreting the output of standard statistical machine learning methods. \\

\begin{figure}[h]
    \centering
    \includegraphics[width=6cm]{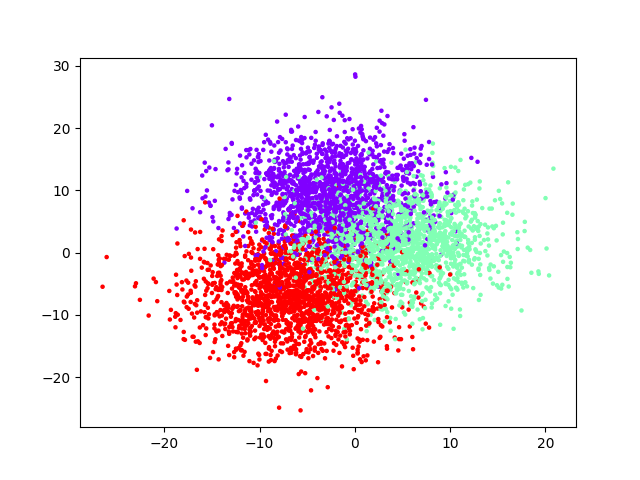}
    \includegraphics[width=6cm]{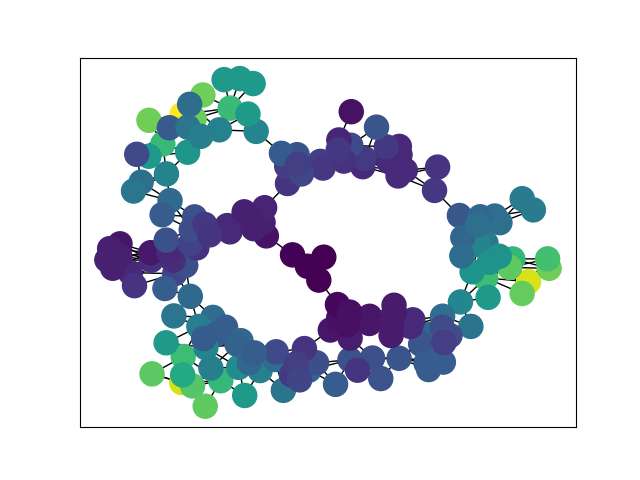}
    \caption{Three label classification problem and its corresponding Mapper. Left: we generate points in 2D with three different groups (red, purple, green). Right: Mapper computed with the posterior probability of a Random Forest classifier. Nodes are colored with the variance of the estimated probabilities from low (yellow) to large (dark blue).}
    \label{fig:blobs}
\end{figure}

{\bf Accelerometer data.} In our second example, we study a data set of time series obtained from accelerometers placed on people doing six  possible types of activities, namely  ``standing",  ``sitting",  ``laying",  ``walking",  ``walking upstairs" and  ``walking downstairs". From the raw data, $561$ features have been extracted from sliding window, see \cite{anguita2013public} and the data website\footnote{\url{https://archive.ics.uci.edu/ml/data sets/Human+Activity+Recognition+Using+Smartphones}} for more details. A Naive Bayes classifier has been trained on the $7,352$ observations. We finally generated an associated Mapper with the corresponding estimated probabilities (computed with $3$ intervals and $30\%$ gain for each class), and we colored the nodes with variance, similarly to what was done above. We show the Mapper, as well as representative time series for some of its nodes, in Figure~\ref{fig:accelero}. Again, the classifier is inducing a topological stratification of the data, with two connected components (corresponding to the two global types of activities, namely walking activities or stationary activities), which are themselves stratified into three activities connected by time series where the classifier is unsure.

\begin{figure}[h]
    \centering
    \includegraphics[width=10cm]{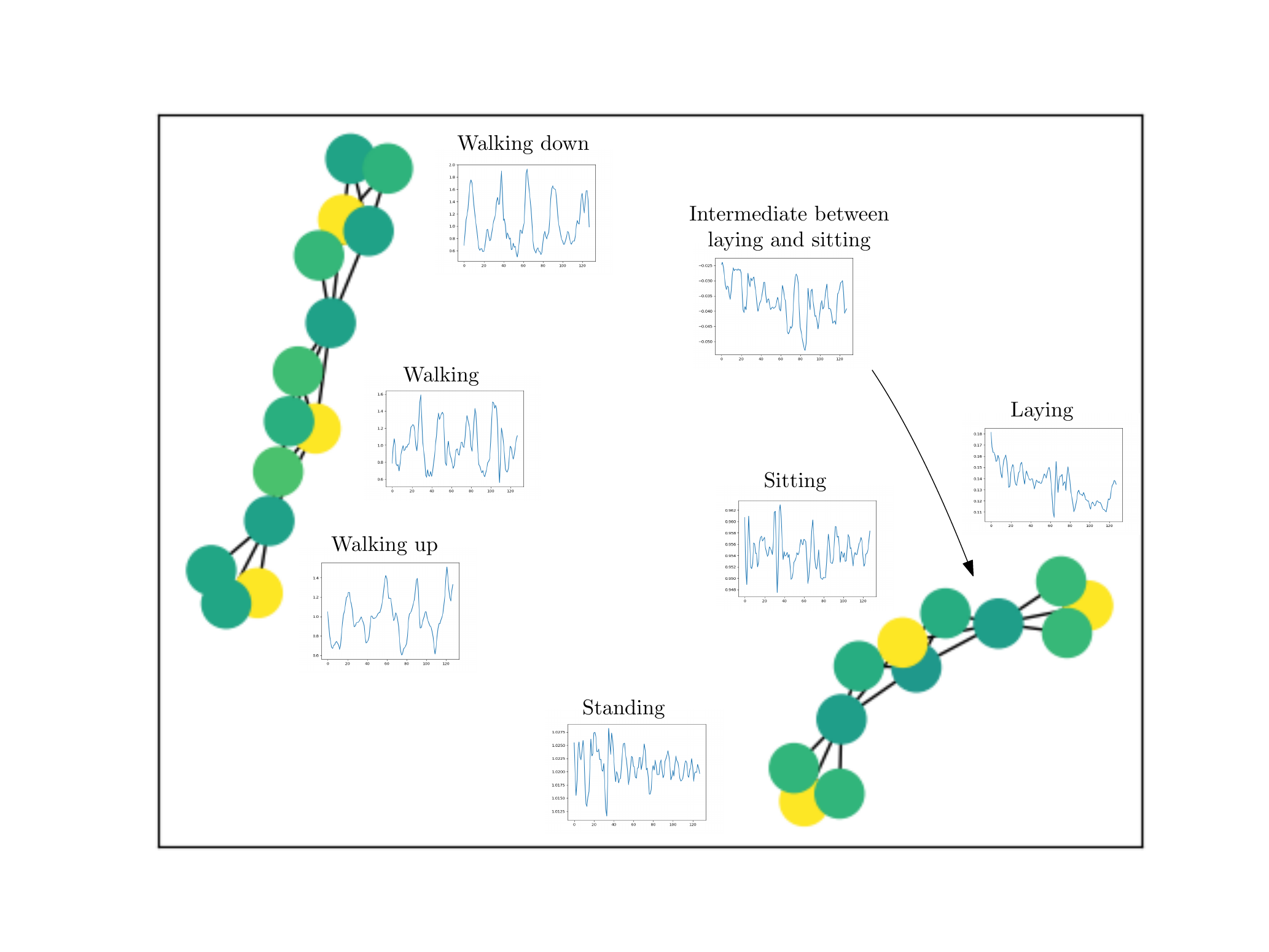}
    \caption{Mapper computed on accelerometer data with the posterior probability of a Naive Bayes classifier. Nodes are colored with the variance of the estimated probabilities from low (yellow) to large (dark green).}
    \label{fig:accelero}
\end{figure}


\subsection{Stochastic Filter with Conditional Probability Distributions}
\label{sec:single}

We now assume that we observe an i.i.d sample  $\{(X_i,Y_i)\,:\,1\leq i\leq n\}$, where $X_i \in \mathcal X$ and   $Y_i \in \R$. In this setting, we propose to consider the more complex filter function which is defined as the conditional distribution $(Y|X)$: the value of this filter at $x$ is the conditional distribution $(Y|X)$. In this framework, the filter domain is thus the space of probability distributions. 

In practice, it might be tempting to directly compute the standard Mapper with the $Y_i$'s as filter values. However, this approach does not really make sense because there is no reason for this Mapper to converge to a deterministic Reeb space for some underlying filter function. Having in mind that the relevant target filter is the conditional probability distribution $(Y|X)$, it is clear that this naive approach is not a good strategy for this aim, since single observations can be very poor estimates of the corresponding distributions.

\subsubsection{Mapper with probability distributions}

Let $\mathcal P$ be the set of probability measures on $\R$. For $x \in \X$, let $\mathcal \nu_x$ be the conditional distribution $(Y|X=x)$. Let $\nu$ be the filter $\nu:x \in \X \mapsto  \nu_x \in \mathcal P$.
Various metrics can be proposed on $\mathcal P$, one of them being the Prokhorov metric~\cite{billingsley2013convergence}, which metrizes weak convergence. 
Generally speaking, the Reeb space $\reeb_\nu(\X)$ is  difficult to infer since it requires to estimate the conditional probability distribution $\mathcal \nu_x$ for all points of $\X$, which is a difficult task, especially for high dimensional data---see for instance  \cite{efromovich2007conditional}. As far as we know, conditional density estimation on submanifolds has not been studied yet.
Moreover, as soon as $\nu$ is injective, which is not a strong assumption in practice, the Reeb space will be isomorphic to $\X$ and it will not provide more information than standard manifold learning procedures~\cite{ma2011manifold}. We thus propose to study approximations of $\reeb_\nu(\X)$, using a filter that is a simple descriptor (such as the mean or the histogram) of $\nu_x$. In this situation, from a data analysis perspective, crude approximations of the Reeb space  shows more interesting patterns than those provided by the Reeb space itself. \\

{\bf Mean- and histogram-based Mappers.} 
Let $\mathcal I =(I_1,\dots,I_d)$ be a partition of $\R$  with intervals. We define the histogram filter $\Hist$ associated to $\mathcal I$ by 
$\Hist_j(x)  = P \left(Y \in I_j  \left|  X = x   \right. \right) $ 
for $j =1, \dots, d$. 
The codomain of $\Hist$ is in $\R^d$, i.e., it is a multivariate filter, with corresponding Reeb space $\reeb_{\Hist}(\X)$. 
We then propose to compute the Mapper with an estimated histogram, which we call the {\em histogram-based Mapper}, using the Nadaraya-Watson kernel estimator: 
\begin{equation*}
\widehat{\Hist}_j(x) =   \frac{\sum_{i = 1, \dots, n}\mathds{1} _{Y_i \in I_j} K_h(X_i-x)}{\sum_{i = 1, \dots, n} K_h(X_i-x)}
\end{equation*}
where $K_h(x) = \frac 1h K (\frac xh)$ for a kernel function $K$, which we choose, in practice, to be the indicator function of the unit ball in the ambient Euclidean space. 

Note that a simpler approach is to estimate the (conditional) mean $f(x) =\E(Y | X=x)$, and we call the corresponding estimator the {\em mean-based Mapper}. However, as illustrated in numerical experiments presented below, it may be not sufficient to retrieve interesting data structure. \\

\subsubsection{Numerical experiments}

We now provide examples of computations of our Mapper-based estimators computed from single realizations of synthetic conditional probability distributions \footnote{Our code is freely available at \texttt{https://github.com/MathieuCarriere/metricmapper}}. We generate $5,000$ points from an annulus, and we looked at two conditional distributions for each point, namely Gaussians and bimodal ones. See Figure~\ref{fig:gaussian} and~\ref{fig:bimodal}. In each of these figures, we display five Mappers: the standard Mapper, the mean-based Mapper when the true conditional mean is supposed to be known, the mean-based Mapper when this mean is estimated, the histogram-based Mapper when the true histogram is supposed to be known, and the histogram-based Mapper when the histogram is estimated. We also plot, for the standard Mapper and the mean-based Mappers, a 3D embedding of the data set, with the mean values used as height. For the standard Mapper and the mean-based Mappers, we used an interval cover with $15$  intervals and overlap percentage $30\%$. For the histogram-based Mapper, we used histograms with $100$ bins and an $0.5$-thickening of a $K$-PDTM cover~\cite{brecheteau2020k} with $K=10$ cover elements. \\

\textbf{Gaussian conditional.} In Figure~\ref{fig:gaussian}, we generated Gaussian conditional probability distributions centered on the second coordinates of the points. It can be seen that the standard Mapper recovers the underlying structure, but in a very imprecise way, in the sense that the feature size is much smaller than it should be, due to the variances of the distributions that induce very noisy filter values. On the other hand, the mean-based Mappers and the histogram-based Mappers all recover the correct structure in much more precise fashion. \\

\begin{figure}[h]
    \centering
    \includegraphics[width=13cm]{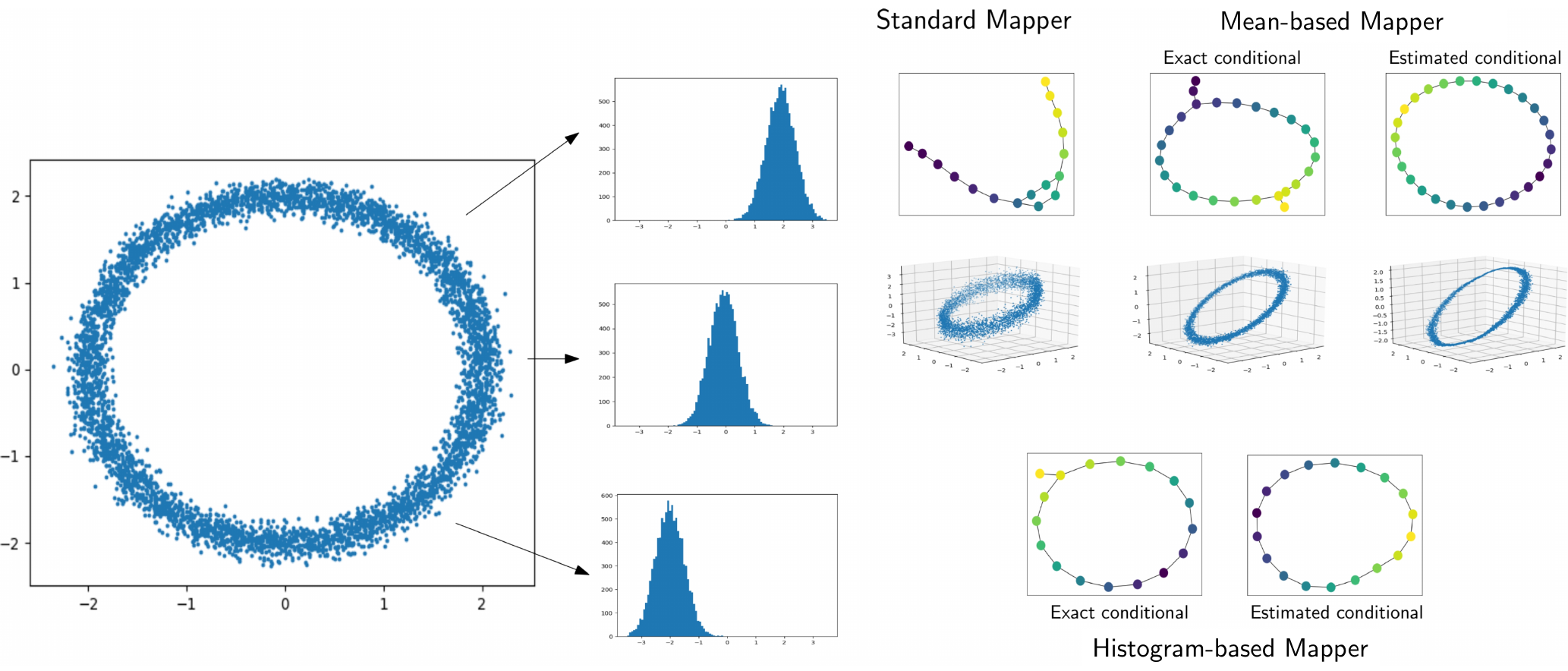}
    \caption{Standard, mean- and histogram-based Mappers computed with exact and estimated Gaussian conditional probability distributions. }
    \label{fig:gaussian}
\end{figure}

\textbf{Bimodal conditional.} In Figure~\ref{fig:bimodal}, we generate bimodal conditional probability distributions whose modes are centered on the second coordinate and its opposite (minus the minimum of the coordinates values). This way, all conditional probability distributions have the same mean. This time, the standard Mapper gets fooled by the probability distributions, and outputs two topological structures instead of one, due to the two modes of the distributions. The mean-based Mappers also fail due to the fact that the distributions all have the same mean, which mixes all points together and makes topological inference very difficult, leading to very noisy Mappers. On the other hand, the histogram-based Mappers both manage to retrieve the correct structure in a precise way. 

\begin{figure}[h]
    \centering
    \includegraphics[width=13cm]{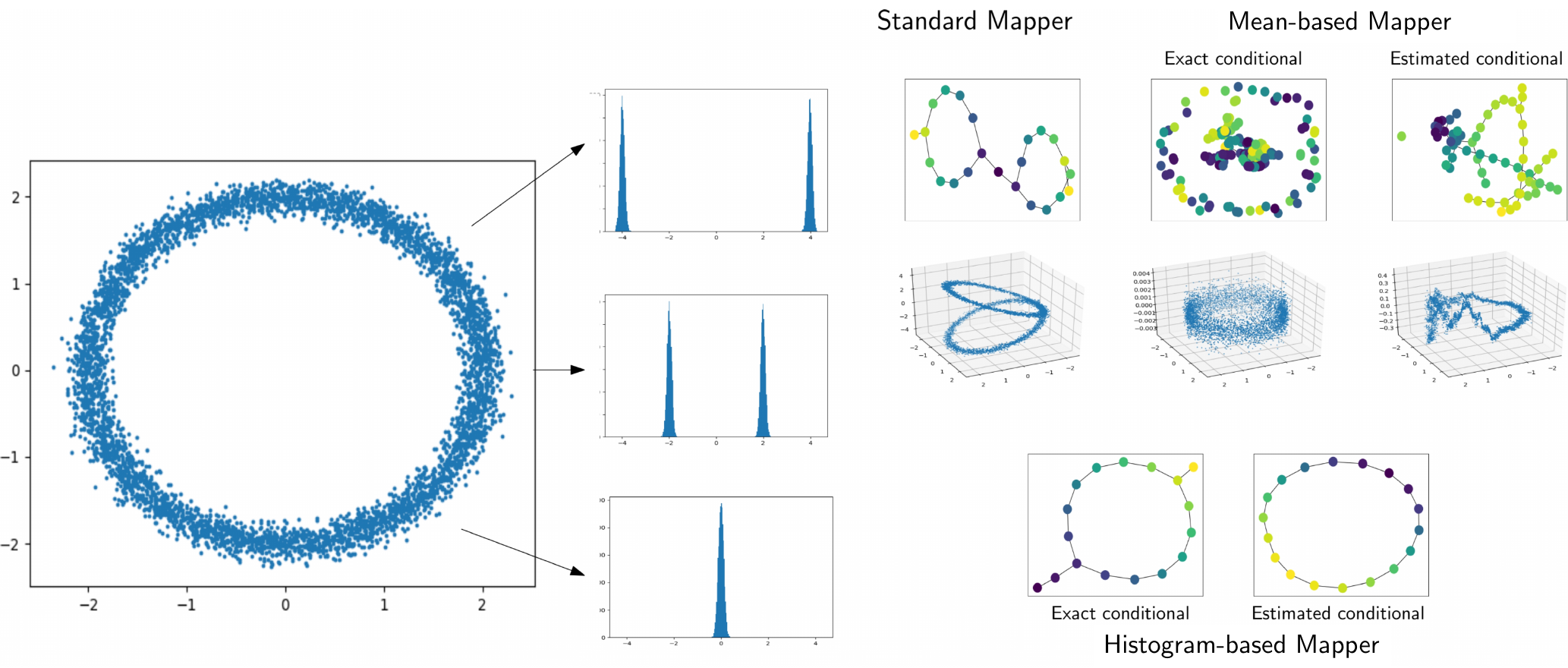}
    \caption{Standard, mean- and histogram-based Mappers computed with exact and estimated bimodal conditional probability distributions. }
    \label{fig:bimodal}
\end{figure}

\subsection{Stochastic filter with combinatorial graphs}\label{sec:metricex}

We end this application section by providing an example of our Mapper-based estimator, when the domain of the filter function is the space of combinatorial graphs. More specifically, we generated a graph for each data point of the annulus data set, using the Erd\H{o}s–R\'enyi model on 20 nodes, and using the first coordinate of the points (normalized between 0 and 1) as the model parameter (that is, any possible edge among the 20 nodes appears with probability given by the model parameter). This means that points located at the bottom of the annulus will have graphs with fewer edges than those above. See Figure~\ref{fig:mappgraph} (left). Then, we used the graph edit distance (provided in the \texttt{networkx} Python package) and a Voronoi cover with $10$ cells (corresponding to $10$ randomly sampled germs) and $0.5$-thickening to compute our estimator. The corresponding sMapper is shown in Figure~\ref{fig:mappgraph} (right). One can see that the correct topology is retrieved by our estimator.

\begin{figure}[h]
    \centering
    \includegraphics[width=12cm]{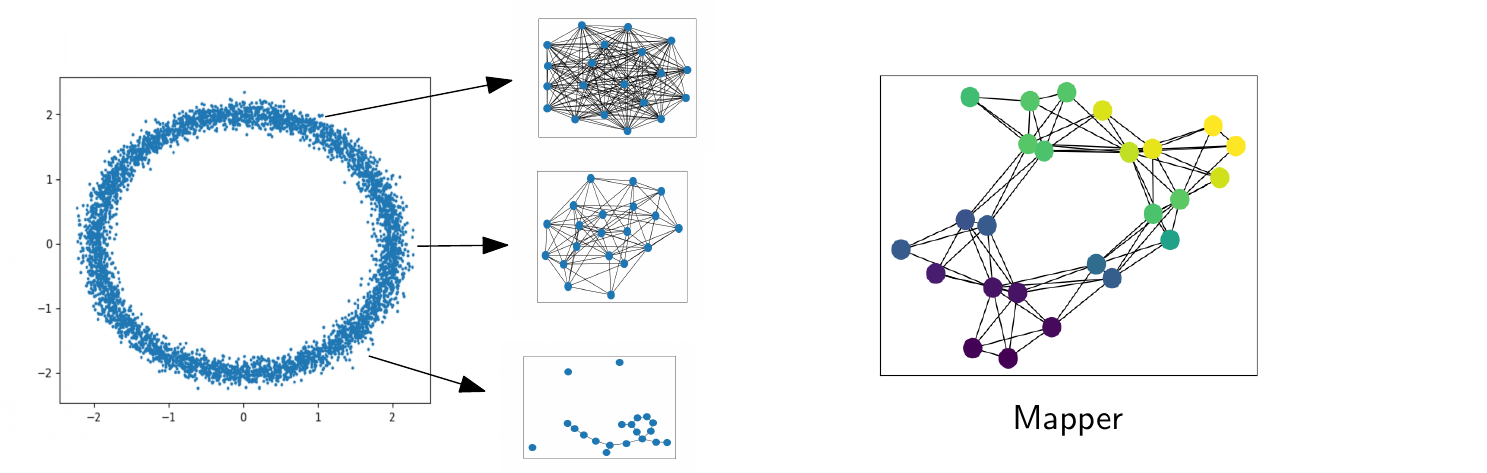}
    \caption{Example of Mapper computation for combinatorial graphs.}
    \label{fig:mappgraph}
\end{figure}

\section{Conclusion and future directions}\label{sec:conc}

 In this article, we presented a computable Mapper-based estimator that enjoys statistical guarantees for its approximation of its corresponding target Reeb space. Moreover, we demonstrated how it can be applied when the filter is estimated from a random sample of data, which we call the Stochastic Filter setting. In this case, we demonstrated a few applications in statistical machine learning, and we provided examples in which the usual Mapper fails dramatically, whereas our estimators still succeed. Much work is still needed for future directions, including demonstrating optimality and stability of the estimator. Moreover, we plan on adapting bootstrap methods to compute and interpret confidence regions. We also plan to adapt specific clustering algorithms in the space of distributions to propose efficient covers in this setting.  In the longer term, we also plan to strengthen our results by extending them to the interleaving distance of~\cite{Munch2016}.
  
\paragraph{Acknowledgements.} The authors would like to thank Claire Br\'echeteau and Cl\'ement Levrard for helpful discussions on the control of the resolution of the $k$-means algorithm, and Yusu Wang for suggesting the use of filter-based pseudometrics. 

\paragraph{Conflict of interest.}
On behalf of all authors, the corresponding author states that there is no conflict of interest.


\bibliography{biblio}

\newcommand{\etalchar}[1]{$^{#1}$}
\begin{thebibliography}{STWCK05}

\bibitem[AGO{\etalchar{+}}13]{anguita2013public}
Davide Anguita, Alessandro Ghio1, Luca Oneto, Xavier Parra, and Jorge
  Reyes-Ortiz.
\newblock A public domain dataset for human activity recognition using
  smartphones.
\newblock In {\em European Symposium on Artificial Neural Networks,
  Computational Intelligence and Machine Learning}, 2013.

\bibitem[BBI01]{Burago2001}
Dmitri Burago, Yuri Burago, and Sergei Ivanov.
\newblock {\em {A course in metric geometry}}.
\newblock American Mathematical Society, 2001.

\bibitem[BBL05]{boucheron2005theory}
St{\'e}phane Boucheron, Olivier Bousquet, and G{\'a}bor Lugosi.
\newblock Theory of classification: A survey of some recent advances.
\newblock {\em ESAIM: probability and statistics}, 9:323--375, 2005.

\bibitem[BBMW19]{Brown2019}
Adam Brown, Omer Bobrowski, Elizabeth Munch, and Bei Wang.
\newblock {Probabilistic convergence and stability of random Mapper graphs}.
\newblock In {\em CoRR}. arXiv:1909.03488, 2019.

\bibitem[BBZ07]{blanchard2007statistical}
Gilles Blanchard, Olivier Bousquet, and Laurent Zwald.
\newblock Statistical properties of kernel principal component analysis.
\newblock {\em Machine Learning}, 66(2-3):259--294, 2007.

\bibitem[BGC18]{Bruel-Gabrielsson2018}
Rickard Br{\"{u}}el-Gabrielsson and Gunnar Carlsson.
\newblock {Exposition and interpretation of the topology of neural networks}.
\newblock In {\em CoRR}. arXiv:1810.03234, 2018.

\bibitem[BGW14]{Bauer2014}
Ulrich Bauer, Xiaoyin Ge, and Yusu Wang.
\newblock {Measuring distance between Reeb graphs}.
\newblock In {\em 30th Annual Symposium on Computational Geometry (SoCG 2014)},
  pages 464--473. Association for Computing Machinery, 2014.

\bibitem[Bil13]{billingsley2013convergence}
Patrick Billingsley.
\newblock {\em Convergence of probability measures}.
\newblock John Wiley \& Sons, 2013.

\bibitem[BL20]{brecheteau2020k}
Claire Br{\'e}cheteau and Cl{\'e}ment Levrard.
\newblock A $ k $-points-based distance for robust geometric inference.
\newblock {\em Bernoulli}, 26(4):3017--3050, 2020.

\bibitem[BLM13]{boucheron2013concentration}
St{\'e}phane Boucheron, G{\'a}bor Lugosi, and Pascal Massart.
\newblock {\em Concentration inequalities: A nonasymptotic theory of
  independence}.
\newblock Oxford university press, 2013.

\bibitem[BLW19]{Boissonnat2019b}
Jean-Daniel Boissonnat, Andr{\'{e}} Lieutier, and Mathijs Wintraecken.
\newblock {The reach, metric distortion, geodesic convexity and the variation
  of tangent spaces}.
\newblock {\em Journal of Applied and Computational Topology}, 3(1-2):29--58,
  2019.

\bibitem[BM12]{biau2012pca}
G{\'e}rard Biau and Andr{\'e} Mas.
\newblock {PCA-Kernel estimation}.
\newblock {\em Statistics \& Risk Modeling}, 29(1):19--46, 2012.

\bibitem[CGLM15]{JMLR:v16:chazal15a}
Fr{{\'e}}d{{\'e}}ric Chazal, Marc Glisse, Catherine Labru{{\`e}}re, and
  Bertrand Michel.
\newblock Convergence rates for persistence diagram estimation in topological
  data analysis.
\newblock {\em Journal of Machine Learning Research}, 16(110):3603--3635, 2015.

\bibitem[CM17]{chazal2017introduction}
Fr{\'e}d{\'e}ric Chazal and Bertrand Michel.
\newblock An introduction to topological data analysis: fundamental and
  practical aspects for data scientists.
\newblock {\em arXiv preprint arXiv:1710.04019}, 2017.

\bibitem[CMO18]{Carriere2018a}
Mathieu Carri{\`{e}}re, Bertrand Michel, and Steve Oudot.
\newblock {Statistical analysis and parameter selection for Mapper}.
\newblock {\em Journal of Machine Learning Research}, 19(12):1--39, 2018.

\bibitem[CO17]{Carriere2017}
Mathieu Carri{\`{e}}re and Steve Oudot.
\newblock {Structure and stability of the one-dimensional Mapper}.
\newblock {\em Foundations of Computational Mathematics}, 18(6):1333--1396,
  2017.

\bibitem[CR18]{Carriere2018}
Mathieu Carri{\`{e}}re and Ra{\'{u}}l Rabad{\'{a}}n.
\newblock {Topological data analysis of single-cell Hi-C contact maps}.
\newblock In {\em The Abel Symposium 2018}, volume~15. Springer-Verlag, 2018.

\bibitem[DL93]{DeVore93}
Ronald DeVore and George Lorentz.
\newblock {\em Constructive approximation}, volume 303.
\newblock Springer Science \& Business Media, 1993.

\bibitem[DMW17]{Dey2017}
Tamal Dey, Facundo M{\'{e}}moli, and Yusu Wang.
\newblock {Topological analysis of nerves, Reeb spaces, Mappers, and Multiscale
  Mappers}.
\newblock In {\em 33rd International Symposium on Computational Geometry (SoCG
  2017)}, volume~77, pages 36:1--36:16. Schloss Dagstuhl--Leibniz-Zentrum fuer
  Informatik, 2017.

\bibitem[dSMP16]{DeSilva2016}
Vin de~Silva, Elizabeth Munch, and Amit Patel.
\newblock {Categorified Reeb graphs}.
\newblock {\em Discrete {\&} Computational Geometry}, 55(4):854--906, 2016.

\bibitem[Efr07]{efromovich2007conditional}
Sam Efromovich.
\newblock Conditional density estimation in a regression setting.
\newblock {\em Annals of Statistics}, 35(6):2504--2535, 2007.

\bibitem[GSBW11]{Ge2011}
Xiaoyin Ge, Issam Safa, Mikhail Belkin, and Yusu Wang.
\newblock {Data skeletonization via Reeb graphs}.
\newblock In {\em Advances in Neural Information Processing Systems 24 (NeurIPS
  2011)}, pages 837--845. Curran Associates, Inc., 2011.

\bibitem[HTF03]{Hastie2003}
Trevor Hastie, Robert Tibshirani, and Jerome Friedman.
\newblock {\em {The elements of statistical learning}}.
\newblock Springer-Verlag, 2003.

\bibitem[JCR{\etalchar{+}}19]{Jeitziner2019}
Rachel Jeitziner, Mathieu Carri{\`{e}}re, Jacques Rougemont, Steve Oudot,
  Kathryn Hess, and Cathrin Brisken.
\newblock {Two-Tier Mapper, an unbiased topology-based clustering method for
  enhanced global gene expression analysis}.
\newblock {\em Bioinformatics}, 35(18):3339--3347, 2019.

\bibitem[MC12]{Murtagh2012}
Fionn Murtagh and Pedro Contreras.
\newblock {Algorithms for hierarchical clustering: an overview}.
\newblock {\em Wiley Interdisciplinary Reviews: Data Mining and Knowledge
  Discovery}, 2(1):86--97, 2012.

\bibitem[MF11]{ma2011manifold}
Yunqian Ma and Yun Fu.
\newblock {\em Manifold learning theory and applications}.
\newblock CRC Press, 2011.

\bibitem[MV03]{mendelson2003entropy}
Shahar Mendelson and Roman Vershynin.
\newblock Entropy and the combinatorial dimension.
\newblock {\em Inventiones mathematicae}, 152(1):37--55, 2003.

\bibitem[MW16]{Munch2016}
Elizabeth Munch and Bei Wang.
\newblock {Convergence between categorical representations of Reeb space and
  Mapper}.
\newblock In {\em 32nd International Symposium on Computational Geometry (SoCG
  2016)}, volume~51, pages 53:1--53:16. Schloss Dagstuhl--Leibniz-Zentrum fuer
  Informatik, 2016.

\bibitem[NLC11]{Nicolau2011}
Monica Nicolau, Arnold Levine, and Gunnar Carlsson.
\newblock {Topology based data analysis identifies a subgroup of breast cancers
  with a unique mutational profile and excellent survival}.
\newblock {\em Proceedings of the National Academy of Sciences of the United
  States of America}, 108(17):7265--7270, 2011.

\bibitem[NLSKK18]{Naitzat2018}
Gregory Naitzat, Namita Lokare, Jorge Silva, and Ilknur Kaynar-Kabul.
\newblock {M-Boost: profiling and refining deep neural networks with
  topological data analysis}.
\newblock In {\em KDD Workshop on Interactive Data Exploration and Analytics},
  2018.

\bibitem[RCK{\etalchar{+}}17]{Rizvi2017}
Abbas Rizvi, Pablo C{\'{a}}mara, Elena Kandror, Thomas Roberts, Ira Schieren,
  Tom Maniatis, and Ra{\'{u}}l Rabad{\'{a}}n.
\newblock {Single-cell topological RNA-seq analysis reveals insights into
  cellular differentiation and development}.
\newblock {\em Nature Biotechnology}, 35:551--560, 2017.

\bibitem[Ree46]{Reeb1946}
Georges Reeb.
\newblock {Sur les points singuliers d'une forme de Pfaff compl{\`{e}}tement
  int{\'{e}}grable ou d'une fonction num{\'{e}}rique}.
\newblock {\em Comptes Rendus de l'Acad{\'{e}}mie des Sciences de Paris},
  222:847--849, 1946.

\bibitem[RW{\etalchar{+}}20]{reiss2020nonasymptotic}
Markus Rei{\ss}, Martin Wahl, et~al.
\newblock Nonasymptotic upper bounds for the reconstruction error of pca.
\newblock {\em Annals of Statistics}, 48(2):1098--1123, 2020.

\bibitem[SMC07]{Singh2007}
Gurjeet Singh, Facundo M{\'{e}}moli, and Gunnar Carlsson.
\newblock {Topological methods for the analysis of high dimensional data sets
  and 3D object recognition}.
\newblock In {\em 4th Eurographics Symposium on Point-Based Graphics (SPBG
  2007)}, pages 91--100. The Eurographics Association, 2007.

\bibitem[STWCK05]{shawe2005eigenspectrum}
John Shawe-Taylor, Christopher~KI Williams, Nello Cristianini, and Jaz Kandola.
\newblock On the eigenspectrum of the gram matrix and the generalization error
  of kernel-pca.
\newblock {\em IEEE Transactions on Information Theory}, 51(7):2510--2522,
  2005.

\end{thebibliography}

\appendix

\section{Proofs}

\subsection{Proof of Theorem \ref{th:stochmapper}}\label{app:bound}

We assume that (H1), (H2), (H3) and (H4) of Section~\ref{sub:calib_risk} are satisfied. The parameters $s\geq s_n$ and $\delta_n$ are assumed to be chosen according to \eqref{eq:deltan} and \eqref{eq:sn}. Recall that the point cloud $\hX_{n,s}$ is a refinement of the point cloud $\hX_{n}$, as defined in Section \ref{sec:stochmapper}. We also introduce the generalized inverse of a modulus of continuity $\omega$:
$$ 
\omega^{-1} (v) = \left\{ u  \, : \,   \omega(u) \geq v \right\}.
$$

\medskip

\textbf{Approximation Lemmata.}  We first prove three approximations. 
In the first one, we show that our estimator 
$\mapper_n$ is actually equivalent to the (continuous) Mapper of an associated neighborhood graph. 

\begin{lemma}\label{lem:approxmapper}
The Mappers $\mapper_n$ and $\mapper_{\hfpl,\U}(G_{\delta_n})$ are isomorphic as simplicial complexes. Hence,
\begin{equation*}\label{eq:approxmapper}
    d_{\rm GH}\left((\mapper_n, \tilde d_{\hfpl,\U}),(\mapper_{\hfpl,\U}(G_{\delta_n}),\tilde d_{\hfpl,\U})\right)=0
\end{equation*}
\end{lemma}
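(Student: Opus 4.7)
The plan is to exhibit a bijective simplicial map between the two nerves by identifying, for each cover element $U \in \U$, the connected components of $\fpl^{-1}(U)$ in the continuous graph $G_{\delta_n}$ with the connected components of the subgraph $G_{\delta_n, s}(U) \subseteq G_{\delta_n,s}$ used to build $\mapper_n$ according to \eqref{eq:subgraph}. Once this bijection is established elementwise, compatibility with intersections (hence with simplices in the respective nerves) will follow automatically, because a vertex of $G_{\delta_n,s}$ lies in $G_{\delta_n,s}(U_\alpha) \cap G_{\delta_n,s}(U_\beta)$ if and only if, as a point of $G_{\delta_n}$, it lies in $\fpl^{-1}(U_\alpha) \cap \fpl^{-1}(U_\beta)$.

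The bulk of the argument is the surjectivity statement: every connected component $C$ of $\fpl^{-1}(U) \subseteq G_{\delta_n}$ contains at least one refined vertex mapping into $U$ under $\hfpl$. First I would dispose of the easy case where $C$ contains an original vertex $x_i \in \hX_n$ with $\hf(x_i) \in U$; then that vertex belongs to $G_{\delta_n,s}(U)$ by definition. The remaining case is the element-crossing one: $C$ lies in the interior of a single edge $e$ of $G_{\delta_n}$, both endpoints of $e$ have filter values outside $U$, and $\fpl(e)$ enters and leaves $U$ along $C$. By definition of $\ell = \ell(\hX_n,\hf,\U)$, the connected component $\fpl(C)$ of $\fpl(e)\cap U$ has length at least $\ell$ in $\Z$. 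The refinement with $s \geq s_n$ subdivides $e$ into pieces of length at most $\delta_n/s \leq \hat{\omega}_{\mathrm{PG}}^{-1}(\ell/2)$, so the $\hfpl$-image of every consecutive pair of refined vertices has diameter at most $\ell/2 < \ell$. Consequently $\fpl(C)$, whose length exceeds $\ell$, cannot be traversed in one refinement step, and at least one subdivision vertex must land inside $C$. This vertex maps into $U$ under $\hfpl$ and hence belongs to $G_{\delta_n,s}(U)$.

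Injectivity is essentially free: two refined vertices lying in distinct connected components of $\fpl^{-1}(U)$ in $G_{\delta_n}$ cannot be joined by a path of refined vertices remaining in $U$, for such a path would also realize a continuous path in $\fpl^{-1}(U)$ contradicting the separation. Conversely, a connected component of $G_{\delta_n,s}(U)$ always sits inside a single component of $\fpl^{-1}(U)$ because refined edges are geometric realizations of sub-segments of the edges of $G_{\delta_n}$, and both endpoints of each such sub-segment must lie in $\fpl^{-1}(U)$. This yields a well-defined bijection between components on both sides, and therefore a simplicial isomorphism of the two nerves; the Gromov--Hausdorff distance between the two spaces equipped with the same pseudometric $\tilde d_{\hfpl,\U}$ is then zero.

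The main obstacle I expect is the bookkeeping in the element-crossing case, specifically aligning the modulus-of-continuity bound with the piecewise structure of the refinement: one has to argue that, along edge $e$, taking consecutive refined vertices at parameter spacing $\leq \hat{\omega}_{\mathrm{PG}}^{-1}(\ell/2)$ guarantees that the image trajectory cannot skip over a component of $\fpl(e) \cap U$ of length $\geq \ell$. This uses the fact that $\hat{\omega}_{\mathrm{PG}}$ controls the diameter in $\Z$ of the image of any parameter-interval of length $\leq h$ lying within a single edge of $G_{\delta_n}$, so a refined vertex lying just before the entry of $\fpl(e)$ into $U$ is at $\Z$-distance at most $\ell/2$ from its successor, whereas exiting $U$ after entering would require moving a $\Z$-distance strictly greater than $\ell/2$ along $\fpl(e)$ without a vertex in between---impossible by the choice of $s_n$.
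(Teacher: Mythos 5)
Your surjectivity step (every connected component of $\hfpl^{-1}(U)$ in the continuous graph $G_{\delta_n}$ must contain a refined vertex, because an uncovered component would force an element-crossing edge whose crossed intersection has length at least $\ell$ while being squeezed into one subdivision piece of image-diameter at most $\ell/2$) is exactly the paper's first claim, up to two harmless slips: you write $\fpl$ where the relevant function is $\hfpl$ throughout, and $s$ subdivision points cut an edge into $s+1$ pieces of length at most $\delta_n/(1+s)$, not $\delta_n/s$ --- this matters, since $\delta_n/s_n$ can exceed $\hat\omega_{\rm PG}^{-1}(\ell/2)$ when $s_n=\lfloor \delta_n/\hat\omega_{\rm PG}^{-1}(\ell/2)\rfloor$, whereas $\delta_n/(1+s_n)$ does not.

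The genuine gap is in the converse direction, which you declare ``essentially free.'' You justify the claim that a connected component of $G_{\delta_n,s}(U)$ sits inside a single component of $\hfpl^{-1}(U)$ by saying that refined edges are sub-segments of edges of $G_{\delta_n}$ ``and both endpoints of each such sub-segment must lie in $\hfpl^{-1}(U)$.'' That inference is false in general: two consecutive refined vertices can both map into $U$ while the image of the open sub-segment between them leaves $U$ and re-enters it ($U$ need not be convex, $\Z$ is a general length space, and the bound $\hat\omega_{\rm PG}(\delta_n/(1+s))\leq \ell/2$ only confines such an excursion to a small ball --- it does not forbid it). The same unjustified claim underlies your ``injectivity'' sentence: a path in the refined subgraph whose vertices all map into $U$ does not automatically realize a continuous path inside $\hfpl^{-1}(U)$. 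If such an excursion occurred between consecutive refined vertices lying in two distinct continuous components, the refined Mapper would merge two nodes that the continuous Mapper keeps separate, and the isomorphism would fail. This is precisely the point where the paper does \emph{not} treat the containment as automatic: it argues that a refined edge of $\tilde{\mathcal C}_\alpha$ not contained in $\mathcal C_\alpha$ would again induce an element-crossing edge whose crossed cover intersection contains no point of $\hX_{n,s}$, contradicting the same calibration of $s_n$ used in the surjectivity step, and only then concludes that $\mathcal C_\alpha$ deform-retracts onto $\tilde{\mathcal C}_\alpha$. Your proof needs an argument of this kind (excursions out of $U$ between consecutive refined vertices are impossible under the choice of $s\geq s_n$), not just the observation about endpoints.
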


\begin{proof}  
Let $U_\alpha \in \U$, and $\mathcal C_\alpha$ be a connected component of $
\hfpl^{-1}(U_\alpha)$ in $\Gn$. 
We claim that $\mathcal C_\alpha\cap \hX_{n,s}\neq\emptyset$. Indeed, if we assume that $\mathcal C_\alpha\cap \hX_{n,s}=\emptyset$ then it means that $\mathcal C_\alpha$ is constituted from a subpath $\bar e$ of an edge $e$ of $G_{\delta_n}$, that does not contain the endpoints of $e$ in $\hX_n$, nor any points of $\hX_{n,s}$ in the subdivision of $e$. 
Hence, $e$ is element-crossing and $U_\alpha$ is crossed by $e$.
By definition, the length $|\hfpl(e)\cap U_\alpha |$ must be at least $\ell(\hX_n,\hf,\U)$. Moreover, due to the subdivision process, the length $| \bar e |$ must be less than $\delta_n/(1+s) \leq \delta_n/(1+s_n)  $, meaning that $|\hfpl(\bar e)| = |\hfpl(e) \cap U_\alpha|$ must be less than $\hat{\omega}_{\rm PG}(\delta_n/ (1+s_n) )$. Hence, using the definition of $s_n$, we have the following inequalities:
$$\frac{\ell}{2} \geq \hat{\omega}_{\rm PG}\left(\frac{\delta_n}{1+\left\lfloor\delta_n/\hat{\omega}_{\rm PG}^{-1}(\ell/2)\right\rfloor}\right)\geq |\hfpl(e) \cap U_\alpha| \geq \ell,$$
which leads to a contradiction (except for $\ell=0$, which happens with null probability).

Hence, for each $U_\alpha$ and connected component $\mathcal C_\alpha$ of $
\hfpl^{-1}(U_\alpha)$ in $\Gn$, 
there is one point of $\hX_{n,s_n}$ that belongs to $\mathcal C_\alpha
$. Now, let $\tilde{\mathcal C}_\alpha$ be the connected component in $G_{\delta_n,s_n}(U_\alpha)$ (see Equation~(\ref{eq:subgraph}))
associated to this point. 
We now claim that $\tilde{\mathcal C}_\alpha$ is included in $\mathcal C_\alpha$.
Indeed, since $G_{\delta_n,s_n}$ is nothing but a subdivision of $G_{\delta_n}$, and since any edge of $G_{\delta_n,s_n}$ in $\tilde{\mathcal C}_\alpha$  must also be present in ${\mathcal C}_\alpha$ (otherwise it would induce an element-crossing edge in $G_{\delta_n}$ whose intersection with the corresponding crossed cover element would contain no points in $\hX_{n,s_n}$, which is impossible for the reason mentioned above), it follows that $\mathcal{C}_\alpha$ deform-retracts on $\tilde{\mathcal{C}}_\alpha$. Hence,
$\mapper_n$ and $\mapper_{\hfpl,\U}(\Gn)$ have the exact same sets of nodes. 

The same argument applies straightforwardly to show that the connected components in the intersections are also in bijection, which means that the simplices of both Mappers are in correspondence as well.
\end{proof}

Let $d_g$ denote the geodesic distance on $\X$.
Let $d_{{\rm H},n} =d^g_{\rm H}(\hX_n, \X)$ where we denote by $d^g_{\rm H}$ the Hausdorff distance 
computed with geodesic distances. 
\begin{lemma}\label{lem:approxdf}
Let $x,x'\in G_\delta$. Then, $|d_{\fpl}(x,x')-d_{\fpl}(\zeta(x), \zeta(x'))|\leq 2\omega(\delta)$, where $\zeta(x)\in\hX_n$ is the closest endpoint of the edge to which $x$ belongs if $x\not\in\hX_n$ and $x$ otherwise. 

Similarly, let $x,x'\in\X$. Then, $|d_f(x,x')-d_f(\zeta(x), \zeta(x'))|\leq 2\omega(d_{{\rm H},n})$, where $\zeta(x)\in\hX_n$ is such that $d_g(x,\zeta(x))\leq d_{{\rm H},n}$ (whose existence is guaranteed with $d^g_{\rm H}(\hX_n,\X)=d_{{\rm H},n}$). 
\end{lemma}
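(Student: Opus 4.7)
The plan is to exploit the fact that $\fpl$ maps each edge of $G_\delta$ to a shortest path in $\Z$ whose total length is bounded by $\omega(\delta)$, and then concatenate such ``short'' pieces with any candidate path realizing (up to $\epsilon$) the infimum in the pseudometric. The argument is symmetric in the two roles of $(x,x')$ and $(\zeta(x),\zeta(x'))$, so we obtain the double inequality by running it in both directions.

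For the first statement, I would first observe the pointwise bound: if $e=(x_i,x_j)$ is the edge containing $x\notin\hX_n$, then $\fpl(x)$ lies on the shortest path joining $f(x_i)$ and $f(x_j)$, whose total length in $\Z$ equals $d_\Z(f(x_i),f(x_j))\leq \omega(\|x_i-x_j\|)\leq \omega(\delta)$ by Assumption~(H3) and the definition of $G_\delta$. In particular, for the subpath $\gamma_x$ of $e$ between $x$ and $\zeta(x)$ we have $\mathrm{diam}_\Z(\fpl\circ\gamma_x)\leq \omega(\delta)$. Given any continuous path $\gamma\in\Gamma(x,x')$ in $G_\delta$, concatenate it with $\gamma_x$ at one end and with the analogous $\gamma_{x'}$ at the other end to obtain a path $\tilde\gamma\in\Gamma(\zeta(x),\zeta(x'))$. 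Since concatenating two path-images sharing an endpoint yields a set whose diameter is bounded by the sum of the diameters of the pieces, we get
$$
\mathrm{diam}_\Z(\fpl\circ\tilde\gamma)\ \leq\ \mathrm{diam}_\Z(\fpl\circ\gamma_x)+\mathrm{diam}_\Z(\fpl\circ\gamma)+\mathrm{diam}_\Z(\fpl\circ\gamma_{x'})\ \leq\ \mathrm{diam}_\Z(\fpl\circ\gamma)+2\omega(\delta).
$$
Taking the infimum over $\gamma\in\Gamma(x,x')$ yields $d_{\fpl}(\zeta(x),\zeta(x'))\leq d_{\fpl}(x,x')+2\omega(\delta)$. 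The reverse inequality is obtained by running the construction in the other direction: start from any $\tilde\gamma\in\Gamma(\zeta(x),\zeta(x'))$ and prepend/append $\gamma_x^{-1}$ and $\gamma_{x'}$ to produce $\gamma\in\Gamma(x,x')$.

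For the second statement, I would replay the same argument with $\X$ in place of $G_\delta$ and $f$ in place of $\fpl$. The only change is that the short connecting pieces $\gamma_x,\gamma_{x'}$ are now geodesics in $\X$ of length at most $d_{{\rm H},n}$ (which exist because $\X$ is a complete locally compact length space, so geodesics realizing the Hausdorff distance are available). For any two points $p,q$ along such a geodesic, $\|p-q\|\leq d_g(p,q)\leq d_{{\rm H},n}$, and since $\omega$ is non-decreasing, $d_\Z(f(p),f(q))\leq \omega(\|p-q\|)\leq \omega(d_{{\rm H},n})$, so $\mathrm{diam}_\Z(f\circ\gamma_x)\leq \omega(d_{{\rm H},n})$. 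The concatenation argument then delivers the bound $2\omega(d_{{\rm H},n})$ as before.

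The only mildly subtle step is the diameter-concatenation inequality: one must remember that the pseudometric is defined via $\mathrm{diam}$ rather than length, so naive length additivity does not directly apply; instead one uses that whenever two subsets share a point, the diameter of their union is at most the sum of the diameters. This is the main technical ingredient and is the reason the bound reads $2\omega(\delta)$ (resp.\ $2\omega(d_{{\rm H},n})$) rather than something larger. Everything else is bookkeeping: checking that the concatenated paths are continuous, that $\zeta(x)$ is well-defined in both cases, and that the case $x\in\hX_n$ (resp.\ $\zeta(x)=x$) is handled by taking the trivial connecting piece of zero diameter.
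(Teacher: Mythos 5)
Your proof is correct and follows essentially the same route as the paper's: concatenating the candidate path with the short connector pieces along the containing edge (resp.\ a geodesic of length at most $d_{{\rm H},n}$), bounding each connector's image diameter by $\omega(\delta)$ (resp.\ $\omega(d_{{\rm H},n})$) via the modulus of continuity, and using subadditivity of the diameter over chained sets. Your symmetric treatment of the two inequalities, taking infima at the end rather than assuming an optimal path is achieved, is a slightly cleaner bookkeeping of the same argument (the paper instead handles this with a footnote and proves one direction by contradiction).
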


\begin{proof}
Let $\gamma$ be a path going from $\zeta(x)$ to $\zeta(x')$ achieving\footnote{We assume for sake of simplicity in this proof that 
the infimums in the definition of the filter-based pseudometric are always achieved by some path. However, our proof extends straightforwardly to the general case by considering limits of sequences of paths converging to the infimum. } $d_{\fpl}(\zeta(x), \zeta(x'))$. Let $\gamma'=\gamma_{x'}\circ\gamma\circ\gamma_x$, where $\gamma_x$ is the path going from $x$ to $\zeta(x)$ along the edge $e_x$ to which $x$ belongs, and $\gamma_{x'}$ is the path going from $\zeta(x')$ to $x'$ along the edge $e_{x'}$ to which $x'$ belongs. Also, let $\tilde\zeta(x)\in\hX_n$ denote the other endpoint of $e_x$, and similarly for $\tilde\zeta(x')$. Then $\gamma'$ is a path from $x$ to $x'$. 

Now, $\fpl\circ\gamma'=\fpl\circ\gamma_{x'} \cup \fpl\circ\gamma \cup \fpl\circ\gamma_x\subseteq \fpl\circ e_{x'} \cup \fpl\circ\gamma \cup \fpl\circ e_x$, and 
\begin{align*}
{\rm diam}_{\Z}(\fpl\circ\gamma') & \leq {\rm diam}_{\Z}(\fpl\circ e_{x'} \cup \fpl\circ\gamma \cup \fpl\circ e_x) \\
& \leq{\rm max} \{d_\Z(f(u),f(v))\,:\,u,v\in \{\tilde\zeta(x),\tilde\zeta(x')\}\cup (\gamma\cap\hX_n)\}\\
& \leq 
{\rm max} \{d_\Z(f(u),f(v))\,:\,u,v\in  (\gamma\cap\hX_n)\} + d_\Z(f\circ\zeta(x), f\circ\tilde\zeta(x)) + d_\Z(f\circ\zeta(x'), f\circ\tilde\zeta(x')) \\
& = d_{\fpl}(\zeta(x),\zeta(x')) + d_\Z(f\circ\zeta(x), f\circ\tilde\zeta(x)) + d_\Z(f\circ\zeta(x'), f\circ\tilde\zeta(x')) \\
& \leq d_{\fpl}(\zeta(x),\zeta(x')) + 2\omega(\delta)
\end{align*}
Hence $d_{\fpl}(x,x')\leq d_{\fpl}(\zeta(x),\zeta(x')) + 2\omega(\delta)$.

Now, assume $d_{\fpl}(x,x') < d_{\fpl}(\zeta(x),\zeta(x')) - 2\omega(\delta)$, and let $\gamma$ be a path from $x$ to $x'$ achieving $d_{\fpl}(x,x')$. Again, let $\gamma'=\gamma_{x'}\circ \gamma \circ \gamma_x$, where $\gamma_x$ is the path going from $\zeta(x)$ to $x$ along the edge $e_x$ to which $x$ belongs, and $\gamma_{x'}$ is the path going from $x'$ to $\zeta(x')$ along the edge $e_{x'}$ to which $x'$ belongs. Then:
\begin{align*}
{\rm diam}_{\Z}(\fpl\circ\gamma') & \leq {\rm diam}_{\Z}(\fpl\circ \gamma_{x'} \cup \fpl\circ\gamma \cup \fpl\circ \gamma_x) \\
& \leq d_{\fpl}(x,x') + d_\Z(f(x'), f\circ\zeta(x')) + d_\Z(f(x), f\circ\zeta(x)) \\
& \leq d_{\fpl}(x,x') + 2\omega(\delta)\\
& < d_{\fpl}(\zeta(x),\zeta(x')),
\end{align*}
which is impossible since $d_{\fpl}(\zeta(x),\zeta(x')) \leq {\rm diam}_{\Z}(\fpl\circ\gamma')$.

The proof for the second statement is exactly the same.
\end{proof}

In our third lemma, we show that the Reeb space of a space and its neighborhood graph approximation are actually close, 
provided that the graph is built on top of a dense enough point cloud.

\begin{lemma}\label{lem:approxreeb}  Assume $6d_{{\rm H},n}\leq \delta_n \leq 2\cdot\reach(\X)$. Then, one has
\begin{equation*}\label{eq:approxreeb}
    d_{\rm GH}\left((\reeb_{\hfpl}(\Gn), \tilde d_{\hfpl}),(\reeb_{f}(\X), \tilde d_f)\right)
    \leq  4\omega(2\delta_n) + 2 \| \fpl-\hfpl \|_\infty .
\end{equation*}
\end{lemma}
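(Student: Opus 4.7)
The plan is to peel off the filter change and the space change separately, via the triangle inequality for the Gromov-Hausdorff distance applied at the intermediate Reeb space $\reeb_{\fpl}(\Gn)$:
\begin{equation*}
d_{\rm GH}\bigl((\reeb_{\hfpl}(\Gn), \tilde d_{\hfpl}),(\reeb_{f}(\X), \tilde d_f)\bigr)
\leq d_{\rm GH}\bigl((\reeb_{\hfpl}(\Gn), \tilde d_{\hfpl}),(\reeb_{\fpl}(\Gn), \tilde d_{\fpl})\bigr)
+ d_{\rm GH}\bigl((\reeb_{\fpl}(\Gn), \tilde d_{\fpl}),(\reeb_{f}(\X), \tilde d_f)\bigr).
\end{equation*}
The first summand will be absorbed by $2\|\fpl-\hfpl\|_\infty$ and the second by $4\omega(2\delta_n)$.

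For the first summand, I would use the identity correspondence on $\Gn$ and push it through the two Reeb projections. For any continuous path $\gamma$ in $\Gn$, the triangle inequality in $\Z$ gives $|\mathrm{diam}_\Z(\fpl\circ\gamma)-\mathrm{diam}_\Z(\hfpl\circ\gamma)|\leq 2\|\fpl-\hfpl\|_\infty$; taking infima over paths yields $|d_{\fpl}(x,x')-d_{\hfpl}(x,x')|\leq 2\|\fpl-\hfpl\|_\infty$ for all $x,x'\in\Gn$, and this estimate descends to the corresponding equivalence classes in the two Reeb spaces.

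For the second summand, Proposition~\ref{prop:spacereeb} lets me replace the two Reeb spaces by the underlying spaces $(\Gn,d_{\fpl})$ and $(\X,d_f)$ at zero cost. I would then use the correspondence $\mathcal C=\{(x,\zeta(x))\,:\,x\in\Gn\}\cup\{(\eta(y),y)\,:\,y\in\X\}$, where $\zeta(x)\in\hX_n$ is the closest endpoint of the edge containing $x$ (as in Lemma~\ref{lem:approxdf}) and $\eta(y)\in\hX_n$ is a nearest sample point to $y$ in geodesic distance (well-defined since $d^g_{\rm H}(\hX_n,\X)=d_{{\rm H},n}$). Lemma~\ref{lem:approxdf} immediately absorbs the discrepancy between an arbitrary point and its chosen sample representative on each side: it contributes $2\omega(\delta_n)\leq 2\omega(2\delta_n)$ on the graph side and $2\omega(d_{{\rm H},n})\leq 2\omega(2\delta_n)$ on the manifold side. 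The task then reduces to comparing $d_{\fpl}$ and $d_f$ when both endpoints lie in $\hX_n$.

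The crux, and the step I expect to be the main obstacle, is the estimate $|d_{\fpl}(a,a')-d_f(a,a')|\leq 4\omega(2\delta_n)$ for $a,a'\in\hX_n$, which requires two symmetric path-lifting arguments. In the direction $d_{\fpl}\leq d_f+\cdots$, given a path $\gamma$ in $\X$ whose $f$-image has diameter close to $d_f(a,a')$, I would use the sampling assumption $6d_{{\rm H},n}\leq\delta_n$ to select samples $a_0=a,a_1,\dots,a_N=a'\in\hX_n$ spaced along $\gamma$ with consecutive Euclidean distances at most $\delta_n$, which produces an admissible path in $\Gn$; the diameter of its image under $\fpl$ then exceeds that of $f\circ\gamma$ only by contributions of order $\omega(\delta_n)$ coming from the $d_{{\rm H},n}$-perturbation at the vertices and from the shortest-path segments in $\Z$ between consecutive $f(a_i)$. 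In the reverse direction, any path in $\Gn$ is also a path in $\X$ because edges are geodesics, so I would estimate $\mathrm{diam}_\Z(f\circ\gamma)$ in terms of $\mathrm{diam}_\Z(\fpl\circ\gamma)$; here the assumption $\delta_n\leq 2\,\reach(\X)$ is used to bound the Euclidean distance from an interior point of a geodesic edge to either endpoint by $2\delta_n$, which is precisely why the factor $\omega(2\delta_n)$ (rather than $\omega(\delta_n)$) appears in the final bound. Combining both directions with the Lemma~\ref{lem:approxdf} bridge and the triangle inequality then yields the announced bound $4\omega(2\delta_n)+2\|\fpl-\hfpl\|_\infty$.
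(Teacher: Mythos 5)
Your proposal is correct and follows essentially the same route as the paper's proof: the same triangle-inequality decomposition (collapsing Reeb spaces at zero cost via Proposition~\ref{prop:spacereeb}), the identity correspondence yielding the $2\|\fpl-\hfpl\|_\infty$ term, the reduction to pairs in $\hX_n$ via Lemma~\ref{lem:approxdf}, and the two path-lifting arguments using $6d_{{\rm H},n}\leq\delta_n$ for the graph path and $\delta_n\leq 2\cdot\reach(\X)$ for the reverse direction. The only slip is bookkeeping: since the Lemma~\ref{lem:approxdf} bridge already costs $2\omega(\delta_n)$, the restricted comparison on $\hX_n$ must be bounded by $2\omega(2\delta_n)$ (which your sketched forward/reverse estimates in fact deliver), not the $4\omega(2\delta_n)$ you state, for the total to close at $4\omega(2\delta_n)+2\|\fpl-\hfpl\|_\infty$.
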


\begin{proof}
By the triangle inequality, we have:
\begin{align}
    d_{\rm GH} & \left((\reeb_{\hfpl}(\Gn), \tilde d_{\hfpl}),(\reeb_{f}(\X), \tilde d_f)\right) \nonumber \\ 
    & \leq d_{\rm GH}\left((\reeb_{\hfpl}(\Gn), \tilde d_{\hfpl}), (\Gn, d_{\hfpl})\right)
    + d_{\rm GH}\left((\Gn, d_{\hfpl}),(\X, d_f)\right)
    + d_{\rm GH}\left((\X, d_f), (\reeb_{f}(\X), \tilde d_f)\right)\nonumber \\
    & = d_{\rm GH}\left((\Gn, d_{\hfpl}),(\X, d_f)\right)\text{ by Proposition~\ref{prop:spacereeb}}  \nonumber \\
    & \leq d_{\rm GH}\left((\Gn, d_{\hfpl}),(\Gn, d_{\fpl})\right)
    + d_{\rm GH}\left((\Gn, d_{\fpl}), (\X, d_{f})\right) \nonumber
\end{align}

{\bf First term.} Let us bound $d_{\rm GH}((\Gn, d_{\hfpl}),(\Gn, d_{\fpl}))$. The identity map $\iota:\Gn\rightarrow \Gn$ can be used to define a correspondence, with which it is clear that: $$|d_{\fpl}(x,x')-d_{\hfpl}(\iota(x),\iota(x'))|=|d_{\fpl}(x,x')-d_{\hfpl}(x,x')|\leq 2\|\fpl-\hfpl\|_\infty.$$ 

Indeed, one has $d_{\hfpl}(x,x')\leq {\rm diam}_\Z(\hfpl\circ\gamma)$, where $\gamma$ is a path achieving $d_{\fpl}(x,x')$. Thus,
$d_{\hfpl}(x,x')\leq {\rm diam}_\Z(\hfpl\circ\gamma)\leq{\rm diam}_\Z(\fpl\circ\gamma) + 2\|\fpl-\hfpl\|_\infty=d_{\fpl}(x,x')+2\|\fpl-\hfpl\|_\infty$. Symmetrically, one can also show that $d_{\fpl}(x,x')\leq d_{\hfpl}(x,x')+2\|\fpl-\hfpl\|_\infty$, hence the result. \\


{\bf Second term.} Let us now bound $ d_{\rm GH}((\Gn, d_{\fpl}), (\X, d_{f}))$. Let $\mathcal C$ be a correspondence between $\X$ and $\Gn$ defined with $\mathcal C=\{(x,\zeta(x))\,:\,x\in\X\}\cup \{(\zeta(y),y)\,:\,y\in\Gn\}$ (see Lemma~\ref{lem:approxdf}). \\

{\bf Restriction to point cloud.} First,  we show that we can restrict to pairs of points in $\hX_n$, up to some constant. Indeed, it follows from Lemma~\ref{lem:approxdf} that, for all $x,x'\in\X$ and $y,y'\in\Gn$:
\begin{align*}
    |d_f(x,x')-d_{\fpl}(\zeta(x),\zeta(x'))| & \leq |d_f(x,x')-d_f(\zeta(x),\zeta(x'))| + |d_f(\zeta(x),\zeta(x'))-d_{\fpl}(\zeta(x),\zeta(x'))| \\
    & \leq 2\omega(d_{{\rm H},n}) + |d_f(\zeta(x),\zeta(x'))-d_{\fpl}(\zeta(x),\zeta(x'))|\\
    |d_f(\zeta(y),\zeta(y'))-d_{\fpl}(y,y')| & \leq |d_{\fpl}(y,y')-d_{\fpl}(\zeta(y),\zeta(y'))| + |d_f(\zeta(y),\zeta(y'))-d_{\fpl}(\zeta(y),\zeta(y'))| \\
    & \leq 2\omega(\delta_n) + |d_f(\zeta(y),\zeta(y'))-d_{\fpl}(\zeta(y),\zeta(y'))| \\
    |d_f(x,\zeta(y'))-d_{\fpl}(\zeta(x),y')| & \leq |d_f(x,\zeta(y'))-d_f(\zeta(x),\zeta(y'))| + |d_{\fpl}(\zeta(x),y')-d_{\fpl}(\zeta(x),\zeta(y'))| \\
    & + |d_f(\zeta(x),\zeta(y'))-d_{\fpl}(\zeta(x),\zeta(y'))|\\
    & \leq \omega(\delta_n)+ \omega(d_{{\rm H},n}) + |d_f(\zeta(x),\zeta(y'))-d_{\fpl}(\zeta(x),\zeta(y'))|
\end{align*}
Since $\omega(d_{{\rm H},n})\leq\omega(\delta_n)$, one has $d_{\rm GH}((\Gn, d_{\fpl}), (\X, d_{f}))\leq 2\omega(\delta_n) + {\rm max}_{x,x'\in\hX_n}\ |d_f(x,x')-d_{\fpl}(x,x')|$. \\

Let $x,x'\in\hX_n$. We now find upper and lower bounds for $d_{\fpl}(x,x')-d_f(x,x')$. \\

{\bf Upper bound.} In order to upper bound $d_{\fpl}(x,x')-d_f(x,x')$, we first show that $d_{\fpl}(x,x')$ cannot be arbitrarily large relative to $d_f(x,x')$.
 Let $\gamma$ be a path on $\X$ from $x$ to $x'$ achieving $d_f(x,x')$. Since $d^g_{\rm H}(\hX_n,\X)\leq d_{{\rm H},n}$, for each $t\in[0,1]$, there exists $x_t\in\hX_n$ such that $d_g(\gamma(t),x_t)\leq d_{{\rm H},n}$. Moreover, since $\hX_n$ is finite, the set $\{x_t\,:\,t\in[0,1]\}$ can be written as
$\{x_{t_1},\dots,x_{t_m}\}$ for some $m\in\N^*$, with $t_1\leq \dots \leq t_m$. Moreover, we claim that $\|x_{t_i}-x_{t_{i+1}}\|\leq \delta_n$, i.e., the set $\{x,x_{t_1},\dots,x_{t_m},x'\}$ forms a path in $\Gn$. Indeed:
\begin{align*}
    \|x_{t_i}-x_{t_{i+1}}\| & \leq \|x_{t_i}-\gamma(t_i)\| + \|\gamma(t_i) -\gamma(t_{i+1})\| + \|\gamma(t_{i+1}) - x_{t_{i+1}}\| \\
    & \leq d_g(x_{t_i},\gamma(t_i)) + d_g(\gamma(t_i) ,\gamma(t_{i+1})) + d_g(\gamma(t_{i+1}) , x_{t_{i+1}}) \\
    & \leq 2d_{{\rm H},n} +  d_g(\gamma(t_i) ,\gamma(t_{i+1}))
\end{align*}
The geodesic distance $d_g(\gamma(t_i) ,\gamma(t_{i+1}))$ is necessarily less than $4d_{{\rm H},n}$, otherwise it would
be possible to find a point along the geodesic, say $\gamma(\bar t)$, such that $t_i \leq \bar t\leq t_{i+1}$ and $d_g(\gamma(\bar t),\gamma(t_i))> 2d_{{\rm H},n}$ and $d_g(\gamma(\bar t), \gamma(t_{i+1}))>2d_{{\rm H},n}$, which lead to $d_g(\gamma(\bar t),x_{t_i})>d_{{\rm H},n}$ and $d_g(\gamma(\bar t),x_{t_{i+1}})>d_{{\rm H},n}$, contradicting $d^g_{\rm H}(\hX_n,\X)\leq d_{{\rm H},n}$. Hence, $\|x_{t_i}-x_{t_{i+1}}\|\leq 6d_{{\rm H},n}\leq \delta_n$ by assumption. Let $\gamma'$ be the path from $x$ to $x'$ in $\Gn$ that goes through the points $\{x,x_{t_1},\dots,x_{t_m},x'\}\in\hX_n$. We also use $x_0$ and $x_1$ to denote $x$ and $x'$.
Then 

\begin{align*}
d_{\fpl}(x,x') & \leq {\rm diam}_{\Z}(\fpl\circ \gamma') \\
& \leq{\rm max}\ \{d_\Z(f(u),f(v))\,:\,u,v\in\{x,x_{t_1},\dots,x_{t_m},x'\})\}\\
&\leq d_f(x,x') + 2 \cdot {\rm max}\ \{d_\Z(f(x_t), f(\gamma(t)))\,:\,t\in\{0,t_1\dots,t_m,1\}\} \\
&\leq d_f(x,x')+ 2\omega(d_{{\rm H},n}) \leq  d_f(x,x')+ 2\omega(\delta_n)
\end{align*}

{\bf Lower bound.} Finally, we now show that $d_{\fpl}(x,x')$ cannot be arbitrarily small relative to $d_f(x,x')$. 
Let $\gamma$ be a path in $\Gn$ achieving $d_{\fpl}(x,x')$. Let $\gamma\cap \hX_n = \{x_0,x_1,\dots,x_m,x_{m+1}\}$, i.e., $\gamma$ goes through the points $x_0,\dots,x_{m+1}\in\hX_n$ with $x_0=x$ and $x_{m+1}=x'$. Finally, let $\gamma'$ be the path from $x$ to $x'$ in $\X$ defined with $\gamma'=\gamma_{m}\circ \dots \circ \gamma_0$, where $\gamma_i$ is a path achieving $d_g(x_i, x_{i+1})$.

Now, we claim that $\gamma'\subseteq \bigcup_{0\leq i \leq m+1} B_g(x_i,(\pi/2)\delta_n)$. 
Indeed, it follows from 
Lemma 3 in~\cite{Boissonnat2019b}
that $\|x_i-x_{i+1}\|\leq\delta_n \leq 2\cdot\reach(\X) \Rightarrow d_g(x_i,x_{i+1})\leq 2\cdot \reach(\X)\cdot{\rm arcsin}\left(\frac{\|x_i-x_{i+1}\|}{2\cdot\reach(\X)}\right) \leq (\pi/2) \|x_i-x_{i+1}\| \leq (\pi/2) \delta_n$. 
Then, one has
\begin{align*}
    d_f(x,x') & \leq {\rm diam}_\Z(f\circ\gamma') \\
    & \leq {\rm diam}_\Z(f(\bigcup\nolimits_{0\leq i \leq m+1} B_g(x_i,(\pi/2)\delta_n )))
    \\
    & = {\rm sup}\ \{d_\Z(f(u), f(v))\,:\,u,v\in \bigcup\nolimits_{0\leq i \leq m+1} B_g(x_i,(\pi/2)\delta_n
    )\}\\
    & \leq {\rm sup}\ \{d_\Z(f(u), f(v))\,:\,u,v\in \{x_0,\dots,x_{m+1}\}\} + 2\cdot {\rm max}_i\ {\rm diam}_\Z(f(B_g(x_i,(\pi/2)\delta_n
    )))\\ 
    & \leq d_{\fpl}(x,x') + 2\omega((\pi/2)\delta_n
    )
\end{align*}

We can finally conclude: $d_{\rm GH}((\Gn, d_{\fpl}), (\X, d_{f}))\leq 2\omega(\delta_n) + {\rm max}_{x,x'\in\hX_n}\ |d_f(x,x')-d_{\fpl}(x,x')| \leq 2\omega(\delta_n) + 2\omega((\pi/2)\delta_n
) \leq 4\omega(2\delta_n) $.
\end{proof}

We are now ready to prove Theorem~\ref{th:stochmapper}.

\begin{proof} {\bf Theorem~\ref{th:stochmapper}.} 
We first decompose the objective into three terms:
\begin{align}
    \E & \left[ d_{\rm GH} ((\mapper_{n},\tilde d_{\hfpl,\U}), (\reeb_f(\X),\tilde d_f)) \right] \nonumber \\
    & = \E \left[
    d_{\rm GH}((\mapper_{\hfpl,\U}(\Gn),\tilde d_{\hfpl,\U}), (\reeb_f(\X),\tilde d_f))
    \right] \text{ by Lemma~\ref{lem:approxmapper}}\nonumber \\
    & \leq \E\left[
    d_{\rm GH}((\mapper_{\hfpl,\U}(\Gn),\tilde d_{\hfpl,\U}), (\reeb_{\hfpl}(\Gn),\tilde d_{\hfpl}))
    \cdot \mathds{1}_\Omega
    \right] \label{eq2} \\
    & + \E\left[
    d_{\rm GH}((\reeb_{\hfpl}(\Gn),\tilde d_{\hfpl}), (\reeb_f(\X), \tilde d_f))
    \cdot \mathds{1}_\Omega
    \right] \label{eq3} \\
    & + \proba(\Omega^c)\cdot \omega(D_{\X}), \nonumber 
\end{align}

where $\Omega$ is the event $\{d_{{\rm H},n}\leq\delta_n/6\}\cap\{\delta_n\leq 2\cdot\reach(\X)\}$, and $D_\X$ is the diameter of $\X$.

Let us now bound (\ref{eq2}) and (\ref{eq3}):

\begin{itemize}
    \item Term (\ref{eq2}). According to Theorem~\ref{thm:convdgh}, we have 
    $$\E\left[
    d_{\rm GH}((\mapper_{\hfpl,\U}(\Gn),\tilde d_{\hfpl,\U}), (\reeb_{\hfpl}(\Gn),\tilde d_{\hfpl}))
    \right]
    \leq 5\cdot\E\left[\res(\U,\hfpl)\right].$$
    \item Term (\ref{eq3}). According to Lemma~\ref{lem:approxreeb}, we have:
    $$\E\left[
    d_{\rm GH}((\reeb_{\hfpl}(\Gn),\tilde d_{\hfpl}), (\reeb_f(\X), \tilde d_f))
    \right]\leq 4\E\left[\omega(2\delta_n)\right] + 2\|\fpl-\hfpl\|_\infty.$$ 

\end{itemize}
We conclude with Lemma~\ref{lem:EspOmega}.
\end{proof}

\begin{lemma} \label{lem:EspOmega}
Under assumptions {\rm (H1)}, {\rm (H2)} and {\rm (H3)}, and for $\Omega$ defined as before, one has
$$\proba(\Omega^c)\cdot  \omega(D_\X)  + 4\E[\omega(2\delta_n)] \leq C \omega\left(C'\frac{{\rm log}(n)^{(2+\beta)/b}}{n^{1/b}}\right)  $$
where $C,C'$ only depends on $a$, $b$ and on the geometric parameters of $\X$.
\end{lemma}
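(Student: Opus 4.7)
}
The plan is to control the two summands at a common threshold of order $r_n := (\log n)^{(2+\beta)/b}/n^{1/b}$, exploiting two consequences of (H3): the inequality $\omega(\lambda u)\leq \lambda \omega(u)$ for $\lambda\geq 1$ (so that constant factors inside $\omega$ are harmless) and the reverse comparison $\omega(u)\geq (\omega(D_\X)/D_\X)\,u$ for $u\leq D_\X$ (so that a polynomial-in-$n$ tail probability multiplied by $\omega(D_\X)$ is eventually dominated by $\omega(\mathrm{const}\cdot r_n)$, which itself only decays at polynomial rate $n^{-1/b}$). This turns the whole problem into showing that a few carefully chosen rare events have probabilities that are $n^{-c}$ for arbitrarily large $c$.

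For the second summand, I would set $t_n := C' r_n$ for a large tuning constant $C'$ and split
\begin{equation*}
\E[\omega(2\delta_n)] \;\leq\; \omega(2t_n) \;+\; \omega(2 D_\X)\,\proba(\delta_n > t_n)
\;\leq\; 2\,\omega(t_n) \;+\; \omega(2 D_\X)\,\proba(\delta_n > t_n).
\end{equation*}
To bound $\proba(\delta_n > t_n)$, I would invoke the standard concentration bound for the Hausdorff distance under $(a,b)$-standardness (Chazal--Glisse--Labru\`ere--Michel), namely $\proba(d_H^E(\hX_n,\X) > u)\lesssim u^{-b}\exp(-c\, n\, u^b)$ for $u$ bounded above, and analogously for the subsample $\tilde{\hX}_{s(n)}$ (viewed conditionally on $\hX_n$, the subsample still satisfies an $(a,b)$-type bound up to negligible error since $s(n)\ll n$). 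The triangle inequality $\delta_n\leq d_H^E(\tilde{\hX}_{s(n)},\X) + d_H^E(\hX_n,\X)$ then transfers the bound to $\delta_n$; with $s(n)=n/(\log n)^{1+\beta}$ one has $s(n)\,t_n^b\asymp (C')^b \log n$, so tuning $C'$ large makes $\proba(\delta_n > t_n)\leq n^{-c}$ for any chosen $c$, dominating the $\omega(2D_\X)$ factor.

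For the first summand, I would decompose $\Omega^c \subseteq \{\delta_n > 2\,\reach(\X)\}\cup\{d_{H,n} > \delta_n/6\}$. The first event has sub-exponential probability by the same Hausdorff concentration (since $2\,\reach(\X)$ is a fixed positive constant and $\delta_n\to 0$ almost surely). For the second, I would introduce $u_n := c_0 (\log n/n)^{1/b}$ and use
\begin{equation*}
\{d_{H,n} > \delta_n/6\} \;\subseteq\; \{d_{H,n} > u_n\} \;\cup\; \{\delta_n \leq 6 u_n\}.
\end{equation*}
The first piece is handled by concentration of $d_H^E(\hX_n,\X)$ combined with the fact that geodesic and Euclidean Hausdorff distances are comparable up to a factor $\pi/2$ at scales below $\reach(\X)$ (Lemma~3 in Boissonnat et al.~\cite{Boissonnat2019b}). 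For the second piece I would use the reverse triangle inequality $\delta_n \geq d_H^E(\tilde{\hX}_{s(n)},\X) - d_H^E(\hX_n,\X)$ and show that $d_H^E(\tilde{\hX}_{s(n)},\X)$ is bounded below by a constant multiple of $(\log s(n)/s(n))^{1/b}$ with high probability, which strictly dominates $6 u_n$ since $s(n)\ll n$. All remaining tails are then polynomial of arbitrary order by tuning $c_0$ and $C'$.

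The main obstacle is the \emph{lower} tail of $d_H^E(\tilde{\hX}_{s(n)},\X)$: since the subsample is drawn with replacement, one cannot argue directly via missed points. The cleanest route is to exhibit, via an $\varepsilon$-net argument on $\X$ combined with $(a,b)$-standardness, a ball $B(x_0,r)\subseteq \X$ of radius of the target order such that with high probability no subsample point falls inside $B(x_0,r/2)$; the Hausdorff distance is then at least $r/2$. A union bound over a deterministic net of $\X$ (whose cardinality is polynomial in $1/r$ by (H1)) turns this into the desired lower-tail estimate. Once both the upper-tail for $\delta_n$ and this lower-tail for the subsample Hausdorff distance are available, combining with the regularity of $\omega$ sketched in the first paragraph completes the proof.
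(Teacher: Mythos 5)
Your overall architecture matches the paper's: you split into the event $\{\delta_n>2\,\reach(\X)\}$, the event $\{d_{{\rm H},n}>\delta_n/6\}$, and the dominating term $\E[\omega(2\delta_n)]$, and you control the first and third of these exactly as the paper does, via the $(a,b)$-standard concentration of $\deh(\hX_n,\X)$ and $\deh(\tilde\hX_{s(n)},\X)$ together with the regularity of $\omega$ from (H3). Those parts are fine.

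The genuine gap is in your treatment of $\proba(d_{{\rm H},n}>\delta_n/6)$. You introduce the absolute threshold $u_n=c_0(\log n/n)^{1/b}$ and need, with high probability, $\delta_n>6u_n$, which you propose to obtain from a lower-tail bound $\deh(\tilde\hX_{s(n)},\X)\gtrsim(\log s(n)/s(n))^{1/b}$ proved by exhibiting a missed ball of that radius. Neither step is available under (H1)--(H3): assumption (H2) only \emph{lower}-bounds ball masses, so $b$ may strictly exceed the intrinsic dimension of $\X$ (e.g.\ the uniform measure on a circle is $(a,2)$-standard), in which case every ball of radius $r\asymp(\log s/s)^{1/b}$ has mass of order $r^{d'}\gg(\log s)/s$ with $d'<b$, all such balls are hit by the subsample with high probability, and in fact $\deh(\tilde\hX_{s(n)},\X)$, $\delta_n$ and $d_{{\rm H},n}$ all live at the much finer scale $n^{-1/d'}$ (up to logs). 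Then $\proba(\delta_n\leq 6u_n)\to1$ and your decomposition $\{d_{{\rm H},n}>\delta_n/6\}\subseteq\{d_{{\rm H},n}>u_n\}\cup\{\delta_n\leq 6u_n\}$ returns a trivial bound, while the packing argument cannot produce a low-mass ball at radius $(\log s/s)^{1/b}$. The point is that this term must be handled by a \emph{relative}, scale-free comparison of $d_{{\rm H},n}$ with $\delta_n$, exploiting only that a subsample of size $s(n)=n/(\log n)^{1+\beta}\ll n$ cannot cover $\hX_n$ at the scale at which $\hX_n$ covers $\X$; this is what the paper does, importing from \cite{Carriere2018a} (term $(B)$ of Proposition 13) the bound $\proba(\deh(\hX_n,\X)>C\delta_n)\leq 2^{b-1}/(n\log n)$ for $n$ large, valid whatever the actual covering scale is. Without such a relative bound (or an additional upper-regularity assumption on $P$ that the lemma does not make), your argument for this term does not go through.
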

\begin{proof}
The proof is borrowed from Appendix A.7 in~\cite{Carriere2018a}, see this reference for more details on the proof.
Let $K = 2\cdot \reach(\X)$. 
Note that by definition: $$\proba(\Omega^c) +  4\E[\omega(2\delta_n)]\leq \proba(\egn>\delta_n/6) + \proba(\delta_n>K) +   4\E[\omega(2\delta_n)].$$ Moreover, since $P$ is $(a,b)$-standard, one has:
\begin{equation}\label{eq:cuevas}
    \proba(\deh(\hX_n,\X) \geq u ) \leq \min\left\{1, \frac{4^b}{a u^b}{\rm e}^{-a\left(\frac{u}{2}\right)^b n}\right\}=f_{a,b}(n,u), \forall u > 0.
\end{equation}

Let us bound each term independently. 

\paragraph*{Second term.} We have the following inequalities:
\begin{align*}
    \proba(\delta_n>K) & = \proba(d^E_{\rm H}(\hX_{s(n)},\hX_n)>K)\\
    & \leq \proba(d^E_{\rm H}(\hX_{s(n)},\X) + d^E_{\rm H}(\hX_{n},\X) >K) \\
    & \leq \proba(d^E_{\rm H}(\hX_{s(n)},\X) > K/2\ \cup\  d^E_{\rm H}(\hX_{n},\X) >K/2) \\
    & \leq \proba(d^E_{\rm H}(\hX_{s(n)},\X) > K/2)+  \proba(d^E_{\rm H}(\hX_{n},\X) >K/2) \\
    & \leq f_{a,b}(s(n),K/2) + f_{a,b}(n,K/2)
\end{align*} 

\paragraph*{First term }(See term $(B)$ in the proof of Proposition 13 in   \cite{Carriere2018a} for more details).  Note that when $\deh(\hX_n,\X) \leq K$, it follows from Lemma 3 in \cite{Boissonnat2019b} that $\egn\leq (\pi/2)\deh(\hX_n,\X)$. Thus,  we have the following inequalities:
\begin{align*}
     \proba(\egn>\delta_n/6) & \leq \proba(\egn>\delta_n/6\ \cap\  d^E_{\rm H}(\hX_n,\X) \leq K) + \proba(\deh(\hX_n,\X) > K) \\
     & \leq \proba(\deh(\hX_n,\X)>\delta_n/(3\pi)\ \cap\  d^E_{\rm H}(\hX_n,\X) \leq K) + \proba(\deh(\hX_n,\X) > K) \\
     & \leq \proba(\deh(\hX_n,\X)>\delta_n/(3\pi)) + \proba(\deh(\hX_n,\X) > K) \\
     & \leq \frac{2^{b-1}}{n{\rm log}(n)} + f_{a,b}(n,K)\text{ for $n$ large enough},
\end{align*}
since it is known that, given a constant $C>0$, the probability $\proba(\deh(\hX_n,\X) > C \delta_n)$ is always upper bounded by $\frac{2^{b-1}}{n{\rm log}(n)}$ for $n$ large enough (with the minimal required value for $n$ increasing with the constant $C$ and the ambient dimension).

\paragraph*{Third term}(See term $(A)$ in the proof of Proposition 13 in   \cite{Carriere2018a} for more details). This is the dominating term. Let $\bar D=\omega(2D_\X)$. Then, we have:
\begin{align*}
    \E[\omega(2\delta_n)] &= \int_0^{\bar D} \proba(\omega(2\delta_n) \geq\alpha){\rm d}\alpha \\
    &\leq \int_0^{\bar D} \proba\left(\deh(\hX_n,\X)\geq \frac{1}{4}\omega^{-1}(\alpha)\right){\rm d}\alpha + \int_0^{\bar D} \proba\left(\deh(\hX_{s(n)},\X)\geq \frac{1}{4}\omega^{-1}(\alpha)\right){\rm d}\alpha \\
    &\leq C'' \omega\left[\left(C'\frac{{\rm log}(s(n))}{s(n)}\right)^{1/b}\right],
\end{align*}
where the constants $C',C''$ depend on $a,b$. 
\end{proof}

\subsection{Proof of Theorem \ref{theo:resolution}}
\label{sub:proof_resolution}
In this section, we have $\mathcal Z = \R^\esd$. The notation $\|\cdot\|$ is  the euclidean norm either in $\R^D$ or in  $\R^\esd$. The constant $C$ may change from line to line.

\subsubsection{Preliminary results}

We consider an optimal $k$-points $\hat t := t(P_n^{\hat f})$ for the measure $P_n^{\hat f}$.  Let us introduce  the  distance function  $d_{\hat t}$ to  a $k$-points $\hat t$  of $(\R^\esd)^k$ : for any $z \in \R^\esd$,
$$d_{\hat t}(z) = \min_{j = 1,\dots ,k} \|z - \hat t_j \|.$$ We also introduce the  random variable  $$ \Delta  = \sup_{i=1, \dots, n} d_{\hat t}(f(X_i)) .$$  Let $\widehat{\mathcal U}^\epsilon = \{\hat U_j^\epsilon  \}_{j = 1, \dots, k}$ be the $\epsilon$-thickening of the Voronoi partition associated to $\hat t$. 
We start with the following lemma:
\begin{lemma} \label{lem_res_Delta}
Under assumptions {\rm (H1)} and {\rm (H3)},
$$
\frac 12 \res(\widehat{\mathcal U}^\epsilon, \hfpl)   \leq    \Delta +  3 \| (f - \hat f)\vert_{\hX_n}  \|_\infty  + \omega(\delta_n)  + \varepsilon .
$$
\end{lemma}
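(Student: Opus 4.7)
\textbf{Proof sketch for Lemma~\ref{lem_res_Delta}.}

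The plan is to directly bound $\|u-v\|$ for arbitrary $u, v \in \hat U_\alpha^\epsilon \cap \im(\hfpl)$ and then take suprema and maxima. The strategy is to reduce everything to the center $\hat t_\alpha$ of the Voronoi cell $\hat U_\alpha$ via the triangle inequality $\|u-v\|\leq \|u - \hat t_\alpha\| + \|\hat t_\alpha - v\|$, and then to bound $\|u-\hat t_\alpha\|$ by chaining through a data point $\hat Z_{i(u)}$ and a nearest optimal center.

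First, I would exploit the structure of $\im(\hfpl)$: since $\mathcal{Z}=\R^\esd$, every point $u\in \im(\hfpl)$ lies on a linear segment $[\hat Z_i,\hat Z_j]$ with $(X_i,X_j)$ an edge of $G_{\delta_n}$ and $\hat Z_k = \hat f(X_k)$. Choosing $i(u)\in\{i,j\}$ to be the closer endpoint gives $\|u-\hat Z_{i(u)}\|\leq \tfrac12\|\hat Z_i-\hat Z_j\|$, and by (H3) together with $\|X_i-X_j\|\leq\delta_n$,
\begin{equation*}
\|\hat Z_i-\hat Z_j\|\leq \|f(X_i)-f(X_j)\|+2\|(f-\hat f)|_{\hX_n}\|_\infty\leq \omega(\delta_n)+2\|(f-\hat f)|_{\hX_n}\|_\infty.
\end{equation*}
So $u$ is $O(\omega(\delta_n)+\|(f-\hat f)|_{\hX_n}\|_\infty)$-close to the true filter value $f(X_{i(u)})$, after a further triangle step through $\hat Z_{i(u)}$.

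Next, since $u\in\hat U_\alpha^\epsilon$, there is $\tilde u\in\hat U_\alpha$ with $\|u-\tilde u\|\leq\varepsilon$. The defining Voronoi property of $\tilde u$ yields $\|\tilde u-\hat t_\alpha\|\leq \|\tilde u-\hat t_j\|$ for \emph{every} index $j$. I would take $j^*(u)$ to be an index realizing $d_{\hat t}(f(X_{i(u)}))\leq\Delta$, which exists by definition of $\Delta$. Then the triangle inequality along the chain $\tilde u \to u \to \hat Z_{i(u)} \to f(X_{i(u)}) \to \hat t_{j^*(u)}$, combined with the bounds above, yields a control of the form
\begin{equation*}
\|\tilde u-\hat t_\alpha\|\;\leq\;\|\tilde u-\hat t_{j^*(u)}\|\;\leq\;\varepsilon+\Delta+O\bigl(\omega(\delta_n)+\|(f-\hat f)|_{\hX_n}\|_\infty\bigr).
\end{equation*}
Adding back $\|u-\tilde u\|\leq\varepsilon$ gives a bound for $\|u-\hat t_\alpha\|$, and applying the same to $v$ and summing yields a bound of the promised form $\Delta+3\|(f-\hat f)|_{\hX_n}\|_\infty+\omega(\delta_n)+\varepsilon$ on $\tfrac12\|u-v\|$, after taking maxima over $\alpha$ and suprema over $u,v$.

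The main obstacle in carrying out this plan cleanly is the careful bookkeeping of the constants: one must avoid double-counting the $\varepsilon$-thickening slack (using the Voronoi inequality $\|\tilde u-\hat t_\alpha\|\leq\|\tilde u-\hat t_{j^*(u)}\|$ rather than adding a second intermediate point) and the filter approximation error $\|(f-\hat f)|_{\hX_n}\|_\infty$ (which appears both in the length of the segment $[\hat Z_i,\hat Z_j]$ and in passing between $\hat Z_{i(u)}$ and $f(X_{i(u)})$). The choice of $j^*(u)$ depending on $u$ (rather than being fixed at $\alpha$) is the essential trick that lets $\Delta$ enter the bound, and exploits the fact that the minimum defining $d_{\hat t}(f(X_{i(u)}))$ is attained at \emph{some} center — a priori different from $\hat t_\alpha$ — which the Voronoi inequality then allows to swap with $\hat t_\alpha$ at no cost.
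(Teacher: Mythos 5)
Your route is essentially the paper's: you reduce the diameter of a thickened cell to twice the distance to its center $\hat t_\alpha$ (whence the $\tfrac12$), pass to a point $\tilde u$ of the unthickened Voronoi cell at cost $\varepsilon$, use the nearest-center property of $\tilde u$ to swap $\hat t_\alpha$ for the center nearest to a true filter value $f(X_{i})$ (the paper packages this same trick as the $1$-Lipschitzness of $d_{\hat t}$), move along the embedded edge to a vertex at cost at most the edge length in $\Z$ (bounded via (H3) and $\|X_i-X_j\|\leq\delta_n$ by $\omega(\delta_n)+2\|(f-\hat f)|_{\hX_n}\|_\infty$), and trade $\hat f$ for $f$ once more to invoke $\Delta$. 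All of these ingredients are the ones used in the paper's proof.

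There is, however, one bookkeeping slip, and it concerns exactly the point you flag as the delicate one: your chain \emph{does} double-count the thickening slack. You bound $\|\tilde u-\hat t_{j^*(u)}\|$ along $\tilde u\to u\to \hat f(X_{i(u)})$-endpoint$\to f(X_{i(u)})\to\hat t_{j^*(u)}$, which already contains $\|\tilde u-u\|\leq\varepsilon$, and then you add $\|u-\tilde u\|\leq\varepsilon$ a second time to return to $\|u-\hat t_\alpha\|$. What your chain actually yields is $\|u-\hat t_\alpha\|\leq \Delta+2\|(f-\hat f)|_{\hX_n}\|_\infty+\tfrac12\omega(\delta_n)+2\varepsilon$: better than the paper on the $\omega$ and sup-norm terms (thanks to the nearest-endpoint trick), but with coefficient $2$ on $\varepsilon$, so the lemma as stated is not literally obtained (harmless for the downstream rates, but not the claimed inequality). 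The fix is to run the edge argument on $\tilde u$ itself rather than on $u$: since the Voronoi partition is a partition of $\im(\hfpl)$, the point $\tilde u\in\hat U_\alpha$ lies on an embedded edge, so you can chain $\tilde u\to\hat f(X_{i_1})\to f(X_{i_1})\to\hat t_{j^*}$ without ever passing through $u$, and $\varepsilon$ then enters exactly once, through $\|u-\tilde u\|$. This is precisely what the paper does with its point $z\in\im(\hfpl)$ in the $j$-th Voronoi cell, and it gives $\|u-\hat t_\alpha\|\leq\Delta+3\|(f-\hat f)|_{\hX_n}\|_\infty+\omega(\delta_n)+\varepsilon$ (or the slightly sharper version with $2\|(f-\hat f)|_{\hX_n}\|_\infty+\tfrac12\omega(\delta_n)$ if you keep your nearest-endpoint refinement), from which the lemma follows.
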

 \begin{proof}
Let $j \in \{1,\dots,k\}$ and $z' \in \hat U_j^\epsilon $. There exists $z \in \im(\hfpl)$
belonging to $j$-th Voronoi cell associated to $\hat t$ such that $\|z-z' \| \leq \varepsilon$. Let $x \in  G _{\delta_n}$ such that $z = \hat f(x)$. The point $x$ belongs to an edge $ [ X_{i_1} , X_{i_2} ]$ of  $ G_{\delta_n}$. We have 
\begin{eqnarray*} 
\|z' - t_j\| & \leq & \|z  - t_j\| + \|z' - z\| \\
 & \leq & \inf_{\ell = 1, \dots, k} \|\hat f (x) - t_\ell \|  + \varepsilon \\
& \leq & d_{\hat t}\left(\hat f (X_{i_1})\right)   + \| \hat f (X_{i_1}) - \hat f (x) \| + \varepsilon \\
& \leq & d_{\hat t}\left( f (X_{i_1})\right) + 3 \| (f - \hat f)\vert_{\hX_n}  \|_\infty   +    \|  f (X_{i_1}) -  f (x) \| + \varepsilon  \\
& \leq & \Delta + 3 \| (f - \hat f)\vert_{\hX_n}  \|_\infty   +  \omega(\delta_n) + \varepsilon, 
\end{eqnarray*}
where we have used the fact that $d_{\hat t}$ is one Lipschitz for the third inequality and the fact that $\|x -  X_{i_1} \| \leq \delta_n$ for the last inequality. The lemma follows.
 \end{proof}

In the following, we use standard notation in the field of empirical processes: for some integrable function $h$ with respect to some measure $Q$, let $Q h = \int h(x) dQ(x)$. Let $P^f$ be the push forward measure of $P$ by $f$.
\begin{lemma} \label{lem:CCtrick}
Under assumptions {\rm (H2)} and {\rm (H5)}, the following inequality holds conditionally to $\hX_n$:
$$  \Delta  \leq   C \left( P^f d_{\hat t}^2 \right)^{\frac \gamma{b+2\gamma}} \,  \vee  \, \left(  P^f d_{\hat t}^2 \right)^{\frac 12}   $$
where the constant $C$ only depends on $a$, $b$, $c$ and $\gamma$.
\end{lemma}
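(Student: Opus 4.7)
\textbf{Proof proposal for Lemma~\ref{lem:CCtrick}.}

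The plan is to use a standard peeling/ball-covering argument. Conditionally on $\hX_n$, both $\hat t$ and $\Delta$ are deterministic. Let $i^\star \in \{1,\dots,n\}$ achieve the supremum in the definition of $\Delta$, and set $x^\star = X_{i^\star}$, so that $\Delta = d_{\hat t}(f(x^\star))$. The key observation is that the function $x \mapsto d_{\hat t}(f(x))$ cannot drop too fast around $x^\star$: since $d_{\hat t}$ is $1$-Lipschitz on $\R^\esd$ and since $\omega(u) \leq c u^\gamma$ by (H5), for any $x \in \X$ with $\|x - x^\star\| \leq r$ one has
$$d_{\hat t}(f(x)) \;\geq\; d_{\hat t}(f(x^\star)) - \|f(x) - f(x^\star)\| \;\geq\; \Delta - c r^\gamma.$$

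First I would pick $r$ so that $c r^\gamma = \Delta/2$, i.e. $r = (\Delta/(2c))^{1/\gamma}$. Then $d_{\hat t}(f(x)) \geq \Delta/2$ on $B(x^\star,r) \cap \X$, and integrating $d_{\hat t}^2 \circ f$ against $P$ yields
$$P^f d_{\hat t}^2 \;=\; \int_{\X} d_{\hat t}^2(f(x))\, dP(x) \;\geq\; \Bigl(\tfrac{\Delta}{2}\Bigr)^2 P\bigl(B(x^\star,r) \cap \X\bigr).$$
By the $(a,b)$-standardness of $P$ (Assumption (H2)), the ball probability is bounded below by $\min\{1, a r^b\}$.

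Then I would split into two cases. If $a r^b \leq 1$ (the small-$\Delta$ regime), substituting the chosen value of $r$ gives $P^f d_{\hat t}^2 \geq (\Delta^2/4)\, a\,(\Delta/(2c))^{b/\gamma}$, which rearranges to $\Delta^{2 + b/\gamma} \leq C\, P^f d_{\hat t}^2$ and hence $\Delta \leq C'\,(P^f d_{\hat t}^2)^{\gamma/(b+2\gamma)}$ for a constant $C'$ depending only on $a,b,c,\gamma$. If instead $a r^b > 1$ (the large-$\Delta$ regime), the lower bound reduces to $P^f d_{\hat t}^2 \geq \Delta^2/4$, yielding $\Delta \leq 2\,(P^f d_{\hat t}^2)^{1/2}$. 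Combining the two cases gives the claimed maximum.

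The main subtlety is the conditioning: once we condition on $\hX_n$, the centre $x^\star$ and the codebook $\hat t$ are fixed, while $P^f$ remains the true population push-forward; the $(a,b)$-standard inequality therefore applies at the deterministic point $x^\star \in \X$ and the integral lower bound is meaningful. The only non-routine part is choosing $r$ to match the two competing scales and keeping track of the piecewise nature of the lower bound $\min\{1, a r^b\}$, which is exactly what produces the $\vee$ in the statement.
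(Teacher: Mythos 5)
Your proof is correct and follows essentially the same argument as the paper: a ball around the extremal point on which $d_{\hat t}\circ f$ stays above $\Delta/2$ (via the $1$-Lipschitzness of $d_{\hat t}$ and (H5)), a Markov-type lower bound on $P^f d_{\hat t}^2$, the $(a,b)$-standardness of $P$, and a case split producing the $\vee$. The only cosmetic difference is that you work directly in the domain $\X$, whereas the paper centers the ball in $\Z$ around $f(X_{i^\star})$ and pulls back to $\X$ through its Lemma~\ref{lem:Pf_standard}; the computations are the same.
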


\begin{proof}
Let $\hat z \in \hZ_n$ such that $ \Delta   =  d_{\hat t}(\hat z)$. The function $z \in \Z \rightarrow d_{\hat t}(z)$ is one Lipschitz, thus for any $z \in \Z$ such that $\| z -  \hat z  \| \leq  \frac{ \Delta}2  $, we have
$$ 
\left|  d_{\hat t}(\hat z)- d_{\hat t}(z)  \right|  \leq \frac{ \Delta}2.
$$
This gives the inclusion 
$$
B\left(\hat z, \frac{ \Delta}2 \right) \subseteq \left\{ z \in \Z \, : \,  d_{\hat t}(z)  \geq \frac{ \Delta}2 \right\}.
$$
Then, by the Markov Inequality for $P^f$, we obtain
\begin{eqnarray*}
P^f d_{\hat t} ^2 & \geq & \frac{ \Delta^2}4  P^f \left( \left\{ z \in \Z \, : \,  d_{\hat t}(z)   \geq \frac{ \Delta}2   \right\} \right)   \\
& \geq & a \frac{ \Delta^2}4  \left[\omega^{-1} \left(\frac{ \Delta}2\right)\right]^b \wedge \frac{ \Delta^2}4 \\
& \geq & a \frac 1{2^b 4 c^{b/\gamma}}    \Delta    ^{\frac{2 \gamma + b}\gamma} \wedge \frac{ \Delta^2}4,
\end{eqnarray*}
where we have used Lemma~\ref{lem:Pf_standard} for the second inequality and (H5) for the third inequality. 
\end{proof}

 \begin{lemma} \label{lem:Pf_standard}
Under assumptions {\rm (H1)}, {\rm (H2)} and {\rm (H3)}, for any $r \geq 0$ and any $z \in \im(f)$, the push forward distribution $P^f$ satisfies the inequality 
 $$ P^f(B(z,r)) \geq a \left[\omega^{-1}(r)\right]^b.$$
\end{lemma}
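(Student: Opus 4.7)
\medskip

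\noindent\textbf{Plan.} The strategy is to transfer the $(a,b)$-standard lower bound, which lives on $\X$ in the ambient Euclidean metric, to a lower bound on the push-forward mass of $d_\Z$-balls in $\im(f)$, using the modulus of continuity of $f$ as the bridge between the two scales.

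First, because $z \in \im(f)$, I pick any preimage $x_0 \in \X$ with $f(x_0) = z$. The key geometric observation is that the Euclidean $\omega^{-1}(r)$-ball around $x_0$ in $\X$ is sent into $B(z,r) \subseteq \Z$ by $f$: for every $x \in \X$ with $\|x - x_0\| \leq \omega^{-1}(r)$, Assumption~(H3) and the definition of the generalized inverse give
$$d_\Z(f(x),z) \;=\; d_\Z(f(x),f(x_0)) \;\leq\; \omega(\|x - x_0\|) \;\leq\; \omega(\omega^{-1}(r)) \;\leq\; r,$$
where the last inequality uses monotonicity and continuity of $\omega$. Consequently,
$$B(x_0,\omega^{-1}(r)) \cap \X \;\subseteq\; f^{-1}(B(z,r)).$$

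Second, taking $P$-mass of both sides and using that $\X$ is the support of $P$, I get $P^f(B(z,r)) = P(f^{-1}(B(z,r))) \geq P(B(x_0,\omega^{-1}(r)))$. Since $x_0 \in \X$, Assumption~(H2) then yields $P(B(x_0,\omega^{-1}(r))) \geq \min\{1,a[\omega^{-1}(r)]^b\}$, which is at least $a[\omega^{-1}(r)]^b$ whenever that quantity is $\leq 1$; in the opposite regime the stated inequality is either trivially absorbed by the trivial bound $P^f \leq 1$ or interpreted with the implicit $\wedge 1$ clamping that is used in Lemma~\ref{lem:CCtrick}.

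There is essentially no serious obstacle: the lemma is just (H2) pulled back through $f$ via the modulus of continuity. The only points requiring a little care are the convention for $\omega^{-1}$ (one should verify $\omega(\omega^{-1}(r)) \leq r$ from the $\inf$ interpretation of the generalized-inverse set, which follows from continuity of $\omega$) and the implicit clamping, neither of which affects the downstream use of the bound.
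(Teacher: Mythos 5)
Your proof is correct and follows essentially the same route as the paper: pick a preimage $x_0$ of $z$, use the modulus of continuity to show $B(x_0,\omega^{-1}(r))\cap\X\subseteq f^{-1}(B(z,r))$, and then apply the $(a,b)$-standard assumption (H2). Your extra remarks on the generalized-inverse convention and the implicit $\min\{1,\cdot\}$ clamping are careful touches the paper glosses over, but they do not change the argument.
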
 
\begin{proof}
For any $r \geq 0$ and any $z=f(x) \in \im(f)$, by definition of the push forward measure $P^f$,
\begin{eqnarray*}
\int_\Z \mathds{1}_{B(f(x), r )} (z') d P^f (z') & \geq & \int_\X \mathds{1}_{B(f(x),r)} (f(x')) d P (x') \\
& \geq & \int_{B(x, \omega^{-1}(r))} \mathds{1}_{B(f(x),r)} (f(x')) d P (x') \\
& \geq & P \left(B(x, \omega^{-1}(r) \right) \\
& \geq & a  \left( \omega^{-1}(r) \right)^b.
\end{eqnarray*}
where we have used for the second inequality the fact that $\omega ( \omega^{-1}  (u))  = u$, because $\omega$ is continuous. 
\end{proof} 
Let $t^\star = t(P^f)$ be an optimal $k$ points for the measure $P^f$.  
\begin{lemma} \label{lem_Pf_t_star}
Under assumptions {\rm (H1)}, {\rm (H2)} and {\rm (H5)},
 $$P^f d_{t^\star}^2  \leq C k^{-\frac{2 \gamma}b}$$
where $C$ only depends on $a$,  $b$, $c$ and $\gamma$.
 \end{lemma}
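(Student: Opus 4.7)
The plan is to bound $P^f d_{t^\star}^2$ by the standard net-based argument for $k$-means: since $t^\star$ minimizes the $k$-means cost for $P^f$ over all $k$-points, it suffices to exhibit a single suboptimal $k$-points $t$ satisfying $P^f d_t^2 \leq C k^{-2\gamma/b}$. I would build this candidate by pushing through $f$ a well-chosen net of $\X$, using (H2) to control the cardinality of the net and (H5) to transfer the net scale from $\X$ to $\im(f)$.

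First, I would use assumption (H2) to control the packing number of $\X$. Let $\{x_1,\dots,x_N\}$ be a maximal $\rho$-separated family in $\X$: the balls $B(x_i,\rho/2)$ are then pairwise disjoint, while maximality forces the balls $B(x_i,\rho)$ to cover $\X$. Summing the $(a,b)$-standard lower bounds over the disjoint balls gives
$$ 1 \geq \sum_{i=1}^N P\bigl(B(x_i,\rho/2)\bigr) \geq N\, a\,(\rho/2)^b. $$
Choosing $\rho = 2(ak)^{-1/b}$ makes $a(\rho/2)^b = 1/k$, so $N\leq k$. Pad with arbitrary dummy points from $\X$ if needed to obtain exactly $k$ net centers $x_1,\dots,x_k$.

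Second, define the candidate $k$-points $t := (f(x_1),\dots,f(x_k)) \in (\R^\esd)^k$. For every $x\in\X$, some $x_i$ satisfies $\|x-x_i\|\leq\rho$, and assumption (H5) yields
$$ d_t\bigl(f(x)\bigr) \leq \|f(x)-f(x_i)\| \leq \omega(\|x-x_i\|) \leq c\,\rho^\gamma. $$
Integrating with respect to $P$ and using the definition of the pushforward together with the optimality of $t^\star$,
$$ P^f d_{t^\star}^2 \,\leq\, P^f d_t^2 \,=\, \int_\X d_t(f(x))^2\, dP(x) \,\leq\, c^2 \rho^{2\gamma} \,=\, c^2\, 4^\gamma\, a^{-2\gamma/b}\, k^{-2\gamma/b}, $$
which is the claimed bound with a constant depending only on $a$, $b$, $c$ and $\gamma$.

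The proof is essentially routine once the right net and pushforward are identified; the only point where assumption (H5) genuinely intervenes is the chain above, where the power-type modulus of continuity is used to trade the Euclidean scale $\rho$ in $\X$ against the scale $c\rho^\gamma$ in $\im(f)$. This trade is exactly where the exponent $2\gamma/b$ in the bound originates, and it is the main geometric content of the lemma; the companion fact that $a(\rho/2)^b \leq 1$ holds automatically for $k\geq 1$ at the chosen radius, so no additional small-$k$ regime has to be treated separately.
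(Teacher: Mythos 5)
Your proof is correct, and it follows the same overall skeleton as the paper's: bound the optimal cost $P^f d_{t^\star}^2$ by the cost of an explicit net-based candidate of $k$ centers at scale of order $k^{-\gamma/b}$ in the image space. The difference is in how the net is produced. The paper works directly on the support of $P^f$ in $\R^\esd$: it first shows (Lemma~\ref{lem:Pf_standard}) that the pushforward $P^f$ inherits a standardness property of the form $P^f(B(z,r))\geq a[\omega^{-1}(r)]^b$, deduces from a cited covering-number bound for standard measures that the support of $P^f$ can be covered by $k$ balls of radius $Ck^{-\gamma/b}$, and takes the centers of this cover as the competitor $\bar t$. You instead build a maximal $\rho$-separated set in $\X$ with $\rho\sim(ak)^{-1/b}$ using (H2) via a packing argument, push it through $f$, and use (H5) to convert the scale $\rho$ in $\X$ into the scale $c\rho^\gamma$ in $\im(f)$; the resulting exponent and dependence of the constant on $a,b,c,\gamma$ match the statement. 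Your route is slightly more elementary and self-contained (it bypasses Lemma~\ref{lem:Pf_standard} and the external covering-number lemma), while the paper's route is economical in context since the standardness of $P^f$ is needed anyway in Lemma~\ref{lem:CCtrick}. The only points to polish are routine: take strict $\rho$-separation so the balls $B(x_i,\rho/2)$ are genuinely disjoint, and note, as you did, that $a(\rho/2)^b\leq 1$ at the chosen radius so the minimum in (H2) is the power term.
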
 
\begin{proof}
From Lemma~\ref{lem:Pf_standard}, it can be easily derived that  the $\delta$-covering number of the support of $P^f$ is  upper bounded by $C \delta^{-b/\gamma}$ where $C$ only depends on $a$, $c$, $b$ and $\gamma$ (see for instance the proof of Lemma 10 in \cite{JMLR:v16:chazal15a}). In other words, the minimum radius $\bar \delta$ to cover the support of $P^f$ with $k$ balls 
is upper bounded by $C k^{-\gamma / b}$. There exists a family of $k$ balls of radius $\bar \delta$ : $B(\bar  t_{j_1},\bar \delta ), \dots ,B(\bar  t_{j_k},\bar \delta )$ which is a cover of the support of $P^f$. We also define the function $\bar j : z \in \Z \mapsto \{1, \dots, k\}  $ which returns the index $j$ of  the (or one of the) closest center $\bar t_j$ to any point $z$ of the support of $P^f$.  Consequently,
\begin{eqnarray}
P^f d_{t^\star}^2 & \leq &  P^f d_{\bar t}^2  \notag \\
& \leq &    \E \left( \| Z -  \bar t_{\bar j(Z)} \|^2   \right) \notag \\
& \leq & \E \left[  \E \left( \left.\| Z -   \bar t_{\bar j(Z)} \|^2    \mathds{1}_{ \| Z -   \bar t_{\bar j(Z)} \| \leq \bar \delta } \right| j(Z) \right)\right] \label{ineq:JZ}
\end{eqnarray}
Conditionally to $\bar j(Z) =j$, one has
\begin{eqnarray*}
\E \left( \| Z - \bar t_j \|^2    \mathds{1}_{ \| Z - \bar t_j \| \leq \bar \delta } \right) 
& = & \int_{0}^{\bar \delta ^2}  P \left( \| Z - \bar t_j \|^2 > u \right)   du    \\
& = &  \int_{0}^{\bar \delta ^2} \left\{ 1 -  P \left( \| Z - \bar t_j \|^2  \leq u \right)  \right\} du  \\
& \leq & C k^{-\frac{2 \gamma}b} .
\end{eqnarray*}
We conclude by integrating this bound in Inequality~\eqref{ineq:JZ}.
\end{proof}

\subsubsection{Main part of the proof of Theorem \ref{theo:resolution}} 
 
From Lemmas \ref{lem:EspOmega}, \ref{lem_res_Delta}, and \ref{lem:CCtrick}, and by Jensen's Inequality, we find that an upper bound is
\begin{equation} \label{ineqresstart}
   C  \left\{
   \left[\E \left( P^f d_{\hat t}^2 \right) \right]^{\frac \gamma{b+2\gamma}}   + \left[   \E \left( P^f d_{\hat t }^2\right) \right]^{\frac12} 
    +   \E \| (f - \hat f)\vert_{\hX_n}  \|_\infty  +   \omega\left(\frac{{\rm log}(n)^{(2+\beta)/b}}{n^{1/b}}\right) \right\} + \varepsilon
\end{equation}
where $C$ only depends on $a$, $b$, $c$, $\gamma$ and on the geometric parameters of $\X$.
Next we need to upper bound the expectation of $P^f d_{\hat t}^2 $. Let $P_n^f$ be the push forward of $P_n$ by $f$, that is the empirical distribution corresponding to the $Z_i$'s. We start with the following standard decomposition:
\begin{eqnarray*} 
0 \leq P^f d_{\hat t}^2  - P^f d_{t^\star}^2  &=& (P^f - P_n^f) d_{\hat t}^2 +   P_n^f   d_{\hat t}^2 - P^f d_{t^\star}^2 \\
&\leq& (P^f - P_n^f) d_{\hat t}^2  +   (P_n^f - P^f)  d_{t^\star}^2 
\end{eqnarray*}  
where $t^\star = t(P^f)$ is an optimal $k$ points for the measure $P^f$. Note that $ t^\star  \in  ( B(0,\|f\|_\infty))^k$ and  that $ \hat t \in  (B(0,\|f\|_\infty))^k$ almost surely.  Thus,
\begin{equation}
\E P^f d_{\hat t}^2  \leq  P^f d_{t^\star}^2    +    2 \E  \sup_{t \in (B(0,\|f\|_\infty))^k}  \left|(P^f - P_n^f) d_{t}^2   \right|  \label{decomptoempproc}
\end{equation}
where the expectation is under the distribution of $\hZ_n$.

  \begin{proposition} \label{prop:procem}
  The following inequality holds:
 $$ \E  \sup_{t \in (B(0,\|f\|_\infty))^k}  \left|(P^f - P_n^f) d_{t}^2   \right|  \leq  \frac{C\|f\|_{\infty}^2}{\sqrt n} \sqrt { k (\esd+2)}  $$
 where $C$ is an absolute constant.
  \end{proposition}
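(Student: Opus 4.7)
\textbf{Proof plan for Proposition~\ref{prop:procem}.}
The plan is to apply the standard symmetrization then Dudley machinery to the empirical process indexed by
$$\mathcal{F} = \{z \mapsto d_t^2(z) : t \in (B(0,\|f\|_\infty))^k\}.$$
First I would observe that, since $\|Z_i\| = \|f(X_i)\| \leq \|f\|_\infty$ almost surely and each $t_j$ lies in $B(0,\|f\|_\infty)$, one has $0 \leq d_t^2(Z_i) \leq 4\|f\|_\infty^2$, so $\mathcal{F}$ admits the uniform envelope $F \equiv 4\|f\|_\infty^2$.

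The second step is symmetrization. Using the standard inequality with i.i.d.\ Rademacher variables $\varepsilon_i$ independent of $\hZ_n$,
\begin{equation*}
\E \sup_{t \in (B(0,\|f\|_\infty))^k} \bigl|(P^f - P_n^f) d_t^2\bigr| \;\leq\; 2\, \E \sup_{t} \Bigl|\tfrac{1}{n}\sum_{i=1}^n \varepsilon_i\, d_t^2(Z_i)\Bigr|.
\end{equation*}
Now I would control the Rademacher complexity through the VC-subgraph dimension of $\mathcal{F}$. Writing $\|z-t_j\|^2 = \|z\|^2 - 2\langle z, t_j\rangle + \|t_j\|^2$, the class $\mathcal{F}_1 = \{z \mapsto \|z-t\|^2 : t \in B(0,\|f\|_\infty)\}$ lies in an affine function space of dimension $\esd + 2$, hence is VC-subgraph with dimension at most $\esd+2$. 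Since $d_t^2 = \min_{1 \leq j \leq k} \|z - t_j\|^2$, the subgraph $\{(z,u) : d_t^2(z) \leq u\} = \bigcap_{j=1}^k \{(z,u) : \|z-t_j\|^2 \leq u\}$ is an intersection of $k$ subgraphs from $\mathcal{F}_1$. A standard argument (e.g.\ the one used in \cite{brecheteau2020k}, or van der Vaart--Wellner~2.6.17) combined with Haussler's covering bound then yields, for any probability measure $Q$ and any $\epsilon \in (0,1]$,
\begin{equation*}
\log N\!\bigl(\epsilon F, \mathcal{F}, L^2(Q)\bigr) \;\leq\; C\, k(\esd+2)\, \log(1/\epsilon),
\end{equation*}
with $C$ a universal constant.

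The third and final step is Dudley's entropy integral applied to the symmetrized process conditionally on $\hZ_n$:
\begin{equation*}
\E \sup_t \Bigl|\tfrac{1}{n}\sum_i \varepsilon_i d_t^2(Z_i)\Bigr| \;\leq\; \frac{C\, \|F\|_{L^2(P_n^f)}}{\sqrt{n}} \int_0^1 \sqrt{\log N(\epsilon F, \mathcal{F}, L^2(P_n^f))}\, d\epsilon \;\leq\; \frac{C'\, \|f\|_\infty^2}{\sqrt{n}}\, \sqrt{k(\esd+2)}.
\end{equation*}
Combining with the symmetrization inequality gives the announced bound.

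The main obstacle is establishing the clean $k(\esd+2)$ bound on the metric entropy without an extra $\log k$ factor: one has to exploit that a minimum of $k$ elements of a VC-subgraph class of index $V$ is itself of controlled combinatorial complexity, for which the cleanest route is to cover $\mathcal{F}_1$ with $N_1(\epsilon)$ $L^2$-balls of radius $\epsilon$ and to check that the induced grid of $k$-tuples produces an $\epsilon$-cover of $\mathcal{F}$ of size $N_1(\epsilon)^k$, so that $\log N(\epsilon, \mathcal{F}, L^2(Q)) \leq k \log N_1(\epsilon, \mathcal{F}_1, L^2(Q)) \leq C k(\esd+2)\log(1/\epsilon)$. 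Once this entropy bound is in hand, Dudley's integral converges and delivers the dimension factor $\sqrt{k(\esd+2)}$ while the envelope $4\|f\|_\infty^2$ yields the scaling $\|f\|_\infty^2/\sqrt{n}$.
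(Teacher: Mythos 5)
Your proposal is correct and follows essentially the same route as the paper: an envelope bound $4\|f\|_\infty^2$, symmetrization plus Dudley's entropy integral, an entropy bound of order $\esd+2$ for the single-center class $z\mapsto\|z-t_1\|^2$ (the paper obtains it by splitting off the constant and linear parts and applying Mendelson's fat-shattering entropy theorem with Lemma 37 of \cite{brecheteau2020k}, you via VC-subgraph/Haussler, which is equivalent), and the power-$k$ covering bound for the minimum over $k$ centers, which is precisely what the paper outsources to Lemma 33 of \cite{brecheteau2020k}. The only step to treat with care is the one you flag yourself: the pointwise bound $|\min_j a_j-\min_j b_j|\leq\max_j|a_j-b_j|$ makes the same-scale product cover immediate for sup-norm covers but loses a factor $\sqrt{k}$ in empirical $L^2$, so you should either invoke the cited covering lemma as the paper does or cover the parameter ball $\left(B(0,\|f\|_\infty)\right)^k$ uniformly, using that $t_1\mapsto\|z-t_1\|^2$ is $4\|f\|_\infty$-Lipschitz on the relevant ball, which yields the clean $k(\esd+2)$-type entropy bound without any $\log k$ factor.
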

 
 \begin{proof}
We introduce the functional spaces
 $$\mathcal G_1 =\left\{z  \mapsto  \|  z  -  t_1 \|^2 \mathds{1}_{B(0,\|f\|_\infty)}(z) \, :  \, t_1 \in  B\left(0,\|f\|_\infty\right)   \right\} $$ 
 and 
 \begin{eqnarray*}  
 \mathcal G 
 &=& \left\{ z   \mapsto d_{t}^2(z) \mathds{1}_{B(0,\|f\|_\infty)}(z)  \, :  \, t \in \left(B\left(0,\|f\|_\infty\right)\right)^k  \right\}  \\
 &=& \left\{ z  \mapsto \min_{j=1,\dots, k} l_j(z)  \, : \, l_j \in \mathcal G_1 \right\} .
 \end{eqnarray*}  
 Note that $ 0 \leq g \leq   4\|f\|_\infty^2$ for any $g \in \mathcal G$. According to Theorem~\ref{theo:Dudley} and Lemma~\ref{lem_Brecheteau33}, 
\begin{eqnarray}
\E \left[\sup_{g \in \mathcal G} \left| (P -P_n) g \right|    \right]  
&\leq&
96 \frac{ \|f\|_\infty^2}{\sqrt n} \E \left[
\int_{0}^{\frac12} \sqrt{ \log \left( N'_{\| \cdot \|} \left( \frac u2 , \frac{( \mathcal G \cup - \mathcal G) ( Z_1^n) }{4\|f\|_\infty^2 \sqrt n} \right)\right)} du \right] \notag  \\
&\leq& 96 \frac{ \|f\|_\infty^2}{\sqrt n} \E \left[
\int_{0}^{\frac12} \sqrt{ \log \left( 2 N'_{\| \cdot \|} \left( \frac u2 , \frac{  \mathcal G  ( Z_1^n) }{ \|f\|_\infty^2 \sqrt n} \right) \right)} du \right] \notag \\
&\leq& 96 \frac{ 4\|f\|_\infty^2}{\sqrt n} \E \left[
\int_{0}^{\frac12} \sqrt{ \log 2  + k \log \left(   N'_{\| \cdot \|} \left( \frac u2 , \frac{  \mathcal G_1( Z_1^n) }{ 4\|f\|_\infty^2 \sqrt n} \right)    \right)} du \right]. \label{dud}
\end{eqnarray}
According to Lemma~\ref{lem_Brecheteau33}, 
$$N'_{\| \cdot \|} \left( \frac u2 , \frac{  \mathcal G_1( Z_1^n) }{4\|f\|_\infty^2 \sqrt n} \right) \leq 
N'_{\| \cdot \|} \left( \frac u4 ,    \mathcal G_2( Z_1^n)   \right) 
N'_{\| \cdot \|} \left( \frac u4 ,    \mathcal G_3( Z_1^n)   \right) .
$$
where $$ \mathcal G_2 =\left\{z  \mapsto \frac{\| t_1 \|^2}{4\|f\|_\infty^2 \sqrt n} \mathds{1}_{B(0,\|f\|_\infty)}(z) \, :  \, t_1 \in  B(0,\|f\|_\infty)   \right\} $$ 
and 
$$ \mathcal G_3 =\left\{ z  \mapsto \frac{\langle z ,  t_1\rangle}{2\|f\|_\infty^2 \sqrt n}  \mathds{1}_{B(0,\|f\|_\infty)}(z)  :    \,   t_1 \in  B(0,\|f\|_\infty)   \right\} .$$
Note that 
$ \mathcal G_2 \subset \mathcal G_4 =  \left\{ z  \mapsto \frac{u}{\sqrt n} \mathds{1}_{B(0,\|f\|_\infty)}(z)     \, :  \, u \in  [0,1/4]   \right\}$
and thus $$N'_{\| \cdot \|} \left( \frac u4 , \mathcal G_2 ( Z_1^n) \right) \leq N'_{\| \cdot \|} \left( \frac u4 , \mathcal G_4( Z_1^n) \right) \leq  \frac 2 \delta.$$ 
Next, according to Theorem~\ref{theo_mendelson2003entropy} and Lemma~\ref{lem_CClem37}, $N'_{\| \cdot \|} \left( \frac u4 , \frac{  \mathcal G_3( Z_1^n) }{4\|f\|_\infty^2 \sqrt n} \right)\leq  \left( \frac{8}{\delta} \right)^{c_1   (\esd+2)  } .$  As a consequence,
$$ \log N'_{\| \cdot \|} \left( \frac u2 , \frac{  \mathcal G_1( Z_1^n) }{4\|f\|_\infty^2 \sqrt n} \right) \leq
(1+c_1(\esd+2)) \log \frac 8\delta 
$$
and we conclude with~\eqref{dud}.
 \end{proof}

{\bf End of the proof of Theorem \ref{theo:resolution}.} According to Inequalities~\eqref{ineqresstart} and \eqref{decomptoempproc}, Proposition~\ref{prop:procem} and Lemma~\ref{lem_Pf_t_star}, and using the fact that $u \mapsto u^\zeta$ is a sub additive function for $\zeta \in (0,1)$, it follows that one has the upper bound
\begin{eqnarray*}
   & & C  \left[
   \left[k^{-\frac{2 \gamma}b} +    \sqrt {\frac {k(\esd+2)}{n}} \|f\|_{\infty}^2 \right]^{\frac \gamma{b+2\gamma}}   + \left[   k^{-\frac{2 \gamma}b} +  \sqrt {\frac {k(\esd+2)}{n}} \|f\|_{\infty}^2\right]^{\frac12}
   \right]    \\
   & & + 3 \E \| (f - \hat f)\vert_{\hX_n}  \|_\infty  + C \omega\left(\frac{{\rm log}(n)^{(2+\beta)/b}}{n^{1/b}}\right)   + \varepsilon \\
   & \leq  & C  \left[ k^{-\frac{2 \gamma^2}{b^2+ 2 \gamma b}} +  \left( \frac {k(\esd+2)}{n} \right) ^{\frac \gamma{2b+4\gamma}} + \left(  \frac {k(\esd+2)}{n} \right) ^{\frac 14}   \right] + 3 \E \| (f - \hat f)\vert_{\hX_n}  \|_\infty + C \left(  \frac{ {\rm log}(n)^{2+\beta} } n\right)^{\gamma/b}        + \varepsilon
\end{eqnarray*}
where the constants $C$ depends on $a$, $b$, $c$, $\gamma$, $\|f\|_{\infty}$ and on the geometric parameters of $\X$. 
For $n \geq k(\esd+2)$, this upper bound can be rewritten as
\begin{eqnarray*}
 & & C  \left[ k^{-\frac{2 \gamma^2}{b^2+ 2 \gamma b}} +  \left( \frac {k \esd }{n} \right) ^{\frac \gamma{2b+4\gamma}}    \right] + 6 \E \| (f - \hat f)\vert_{\hX_n}  \|_\infty +       2 \varepsilon    .
\end{eqnarray*}
This concludes the proof of Theorem \ref{theo:resolution}.

\subsubsection{Dudley's entropy integral and tools for covering numbers}

For ease of reading, several result about the Dudley's entropy integral and covering numbers are recalled in this section. Our presentation is inspired from Section B.1 in \cite{brecheteau2020k}.

\medskip

Let $\mathcal G$ and $\mathcal G'$ be two countable families of functions $g:\R^\esd \rightarrow \R$. The set $ \mathcal G  ( Z_1^n)$ is the set $$\mathcal G  ( Z_1^n)=\{ (g(Z_1), \dots, g(Z_n)) \, : \, g \in \mathcal G  \}.$$ 
For $S \subset \R^\esd$, let $N'_{\|\cdot \|}(\delta,S)$ denotes the $\delta$ covering number of $S$ with respect to the euclidean norm $\|\cdot\|$ in $\R^\esd$. 

Let $ Z_1,\dots, Z_n$ sampled according to $P$, which is a distribution on $\R^\esd$, and let $P_n$ be the corresponding empirical measure. The next result is a particular instance of the so-called Dudley's integral.

 \begin{theorem}{\cite[Corollary 13.2]{boucheron2013concentration}} \label{theo:Dudley}
Assume that $\mathcal G$ contains the null function and that $g \leq R$ for any $g \in \mathcal G$. Then,
$$ \E \left[\sup_{g \in \mathcal G} \left| (P -P_n) g \right|    \right]  \leq
24 \frac{R}{\sqrt n} \E \left[
\int_{0}^{\frac12} \sqrt{ \log \left( N'_{\| \cdot \|} \left( \frac u2 , \frac{( \mathcal G \cup - \mathcal G) ( Z_1^n) }{R \sqrt n} \right)\right)} du
\right].$$
\end{theorem}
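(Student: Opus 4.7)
The plan is to follow the classical symmetrization-plus-chaining argument that is standard for bounding suprema of empirical processes. First, I would apply a symmetrization step: introducing an independent copy $Z_1',\dots,Z_n'$ of the sample together with Rademacher signs $\epsilon_1,\dots,\epsilon_n$, one shows via Jensen's inequality and the triangle inequality that
\begin{equation*}
\E\!\left[\sup_{g\in\mathcal G}|(P-P_n)g|\right]
\;\leq\; 2\,\E\!\left[\sup_{g\in\mathcal G}\left|\tfrac{1}{n}\sum_{i=1}^n \epsilon_i g(Z_i)\right|\right].
\end{equation*}
This reduces the problem to bounding the conditional Rademacher process indexed by the (random) set $\mathcal G(Z_1^n)\subset\R^n$, which lives in a bounded region of $\R^n$ because $0\in\mathcal G$ and $g\leq R$ for all $g\in\mathcal G$.

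Next, conditionally on $Z_1,\dots,Z_n$, I would apply Dudley's chaining to the Rademacher process $g\mapsto\tfrac{1}{\sqrt n}\sum_i\epsilon_i g(Z_i)/\sqrt n$, viewed as a stochastic process on the metric space $(\mathcal G(Z_1^n)/(R\sqrt n),\|\cdot\|)$. The key input is the sub-Gaussian increment bound: for $g,g'\in\mathcal G$,
\begin{equation*}
\Bigl\|\tfrac{1}{\sqrt n}\sum_i\epsilon_i(g(Z_i)-g'(Z_i))\Bigr\|_{\psi_2}\;\lesssim\;\bigl\|g(Z_1^n)-g'(Z_1^n)\bigr\|/\sqrt n,
\end{equation*}
which follows from Hoeffding's lemma for Rademacher variables. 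Chaining with respect to successive $2^{-k}$-nets then yields a bound of the form $C\int_0^{D/2}\sqrt{\log N'_{\|\cdot\|}(u, \mathcal G(Z_1^n)/(R\sqrt n))}\,du$, where $D$ is the $\|\cdot\|$-diameter of the rescaled set. Because $0\in\mathcal G$ and $0\leq g\leq R$, this diameter is at most $1$, so the integral can be cut off at $1/2$ after replacing the index set by the symmetrized family $\mathcal G\cup-\mathcal G$ to handle absolute values inside the supremum (which costs only a constant and changes the covering to that of $(\mathcal G\cup-\mathcal G)(Z_1^n)$).

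Finally, I would collect the constants: the symmetrization gives a factor $2$, the chaining constant can be taken to be $12$, and multiplying yields the constant $24$ displayed in the statement; the outer scaling by $R/\sqrt n$ comes from rescaling the metric back. Taking expectation over $Z_1^n$ on both sides gives the claimed inequality. The main obstacle, if any, is bookkeeping the constants through the chaining argument (the standard references give $12$ for the $\psi_2$ chaining constant, which matches the $24$ after symmetrization); the symmetrization and sub-Gaussian steps themselves are routine.
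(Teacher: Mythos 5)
The paper offers no proof of this statement---it is imported verbatim as Corollary~13.2 of Boucheron, Lugosi and Massart's monograph---so the only meaningful comparison is with that source, whose argument is exactly the symmetrization-plus-Dudley-chaining route you outline (symmetrization factor $2$, chaining constant $12$, passage to the symmetrized class $\mathcal G\cup-\mathcal G$ to absorb the absolute value, and radius at most $1$ after rescaling by $R\sqrt n$, whence the cutoff at $1/2$). Your proposal is correct and takes essentially the same approach; the only pedantic caveat is that bounding the radius of the rescaled set uses $0\leq g\leq R$ rather than the stated $g\leq R$ alone, which is anyway how the result is applied in the paper (there $0\leq g\leq 4\|f\|_\infty^2$).
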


\begin{lemma}{\cite[ Lemma 33]{brecheteau2020k}}\label{lem_Brecheteau33}
Let $\delta >0$. Let $\mathcal G_{(k)} = \{ \min_{j = 1,\dots, k} g_j \, : \, g_j \in \mathcal G\}  $ and $\mathcal G +\mathcal G'  = \{ g  + g' \, , \, g  \in \mathcal G , g' \in \mathcal G'\}$. The following inequalities hold:
\begin{itemize}
    \item   $  N'_{\| \cdot \|} \left(  \delta  ,   (\mathcal G \cup - \mathcal G  )   ( z_1^n) \right) \leq  2   N'_{\| \cdot \|} \left(   \delta   ,  \mathcal G     ( z_1^n) \right)    $ 
\item $  N'_{\| \cdot \|} \left(  \delta  ,   \mathcal G_{(k)}    ( z_1^n) \right) \leq  \left( N'_{\| \cdot \|} \left(   \delta   ,  \mathcal G     ( z_1^n) \right) \right)^k $ 
\item $ 
$$  N'_{\| \cdot \|} \left(  2\delta  ,    (\mathcal G +  \mathcal G') ( z_1^n) \right) \leq    N'_{\| \cdot \|} \left(   \delta   ,  \mathcal G     ( z_1^n) \right)  N'_{\| \cdot \|} \left(   \delta   ,  \mathcal G'     ( z_1^n) \right)  .$
\end{itemize}
\end{lemma}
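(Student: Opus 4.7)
The plan is to prove each of the three covering number inequalities by constructing an explicit $\delta$- or $2\delta$-net for the transformed function class, built directly from an optimal $\delta$-net of $\mathcal G$ (and, in the third case, of $\mathcal G'$). All three arguments are classical book-keeping proofs resting on three elementary properties of the ambient norm on $\R^n$ used to evaluate the sets $\mathcal G(z_1^n)$: invariance under negation, 1-Lipschitz continuity of the coordinatewise minimum operation, and the triangle inequality. They also use the harmless observation that forming unions with $-\mathcal G$, taking pointwise minima, and taking pointwise sums all commute with evaluation at $(z_1,\dots,z_n)$.

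For the first inequality, let $\mathcal C=\{c^{(1)},\dots,c^{(N)}\}\subset\R^n$ be a minimal $\delta$-net of $\mathcal G(z_1^n)$ of cardinality $N=N'_{\|\cdot\|}(\delta,\mathcal G(z_1^n))$. I would take the candidate cover $\mathcal C\cup(-\mathcal C)$, of cardinality at most $2N$; for $g\in\mathcal G$, any $c^{(i)}$ with $\|g(z_1^n)-c^{(i)}\|\leq\delta$ also satisfies $\|-g(z_1^n)-(-c^{(i)})\|=\|g(z_1^n)-c^{(i)}\|\leq\delta$ because the norm is negation-invariant. For the third inequality, if $\mathcal C$ and $\mathcal C'$ are minimal $\delta$-nets of $\mathcal G(z_1^n)$ and $\mathcal G'(z_1^n)$, then the Minkowski sum $\mathcal C+\mathcal C'=\{c+c':c\in\mathcal C,\,c'\in\mathcal C'\}$ has cardinality at most $|\mathcal C|\,|\mathcal C'|$, and for $g+g'\in\mathcal G+\mathcal G'$, choosing $c,c'$ covering $g(z_1^n),g'(z_1^n)$, the triangle inequality gives $\|(g+g')(z_1^n)-(c+c')\|\leq\|g(z_1^n)-c\|+\|g'(z_1^n)-c'\|\leq 2\delta$.

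The second inequality is the one I expect to be the main technical point, since it claims a bound at resolution $\delta$ (not $\sqrt k\,\delta$) for the class of pointwise minima, which implicitly requires the norm on $\R^n$ to behave well under coordinatewise minimum. My proof would use the scalar inequality $|\min_j a_j-\min_j b_j|\leq\max_j|a_j-b_j|$, applied coordinate by coordinate, to reduce the norm distance between two componentwise minima to the maximum over $j$ of the distances between the $j$-th summands. Concretely, given $(g_1,\dots,g_k)\in\mathcal G^k$, I pick $c_j\in\mathcal C$ within $\delta$ of $g_j(z_1^n)$ for each $j$; then the vector $(\min_j c_j(z_i))_{i=1}^n$ lies within $\delta$ of $(\min_j g_j(z_i))_{i=1}^n$ in the sup norm on $\R^n$. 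Since there are at most $N^k$ such tuples $(c_1,\dots,c_k)\in\mathcal C^k$, the resulting family of minimum vectors provides a $\delta$-net of $\mathcal G_{(k)}(z_1^n)$ of cardinality at most $N^k$.

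The main obstacle is really the interpretation of the ambient norm in the second part: the constant $1$ in front of $\delta$ forces $\|\cdot\|$ to be of sup type on $\R^n$, since for an $\ell^2$-type norm the same construction would only yield a $\sqrt k\,\delta$-net and the exponent-$k$ count would no longer match the stated bound. Once the sup-norm convention under which the lemma is applied in Proposition~\ref{prop:procem} is fixed, the three cardinality bounds $2N$, $N^k$, and $NN'$ combined with the covering arguments above immediately give the three claimed inequalities.
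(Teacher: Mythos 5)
The paper offers no proof of Lemma~\ref{lem_Brecheteau33}: it is quoted from \cite{brecheteau2020k} in a section that merely recalls covering-number tools, so your argument has to stand on its own rather than be measured against an internal proof. Your nets for the first and third bullets are correct and complete for the Euclidean norm (only negation-invariance and the triangle inequality are used), and for the second bullet you have correctly isolated the crux: the coordinatewise bound $|\min_j a_j-\min_j b_j|\leq\max_j|a_j-b_j|$ makes the map $(c_1,\dots,c_k)\mapsto\min_j c_j$ contractive only for the sup norm on $\R^n$, whereas for the Euclidean norm the same construction produces only a $\sqrt{k}\,\delta$-net from $\delta$-nets of $\mathcal G(z_1^n)$.

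The gap is in how you close that issue. You resolve it by decreeing a ``sup-norm convention under which the lemma is applied in Proposition~\ref{prop:procem}'', but that convention is not available in this paper: $N'_{\|\cdot\|}$ is explicitly defined with the Euclidean norm, and both results the lemma is combined with (Dudley's integral, Theorem~\ref{theo:Dudley}, and the fat-shattering bound of Theorem~\ref{theo_mendelson2003entropy}) are stated for (normalized) Euclidean covering numbers, so you cannot switch metrics for the min step while keeping the Euclidean-based bound for the base class $\mathcal G_1$. Moreover, with the Euclidean norm the second bullet is not merely unproved by your construction; as stated for arbitrary $\mathcal G$ it fails: take $\mathcal G(z_1^n)$ dense in a ball $B(c,\delta)$ of $\R^n$, so that $N'_{\|\cdot\|}(\delta,\mathcal G(z_1^n))=1$, while $\mathcal G_{(2)}(z_1^n)$ contains points arbitrarily close to $c-\delta(e_i+e_j)$ for every pair $i\neq j$; writing a putative common center as $c-\delta y$, the constraints $\|y-e_i-e_j\|\leq 1$ force all pairwise sums $y_i+y_j\geq \tfrac12$, hence at least $n-1$ coordinates of $y$ are at least $\tfrac14$ and $\|y\|^2\geq (n-1)/16$, contradicting $\|y\|\leq 1+\sqrt2$ once $n$ is large; so no single $\delta$-ball suffices. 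What your construction genuinely proves is either the sup-norm version of the bullet (which would then require redoing the surrounding chaining argument in that metric), or the Euclidean version with inflated radius, $N'_{\|\cdot\|}(\sqrt{k}\,\delta,\mathcal G_{(k)}(z_1^n))\leq \bigl(N'_{\|\cdot\|}(\delta,\mathcal G(z_1^n))\bigr)^k$, which is the form that can be fed into the entropy integral at the cost of factors logarithmic in $k$. As written, your last paragraph papers over this by attributing to the paper a norm convention it does not adopt; the discrepancy should instead be flagged as an issue with the transcription or use of the cited lemma.
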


It is possible to control the covering number of a set $\mathcal G( z_1^n)$ by  the $\delta$-fat-dimension (also called $\delta$-shattering dimension) of the family $\mathcal G$.

\begin{definition} Let $\delta > 0 $. 
\begin{itemize}
\item A set $\{z_1, \dots, z_m\} \subset \R^\esd$ is said to be $\delta$-shattered by $\mathcal G$ if there exists  $(u_1,\dots,u_m) \in \R^m$ such that for all $(\varepsilon_1, \dots, \varepsilon_m) \in  \{-1, +1\}^m $, there exists $g \in \mathcal G$ such that:
$$ \forall i \in \{1,\dots,m \}, \,  \varepsilon_i(g(z_i) - u_i) \geq \delta.$$
\item The $\delta$ fat-dimension of $\mathcal G$, $\fat_\delta(\mathcal G)$, is the size of the largest set in $\mathbb R^\esd$ that is $\delta$-shattered by $\mathcal G$.
 \end{itemize}
\end{definition}

 \begin{theorem}{\cite[Theorem 1]{mendelson2003entropy}} \label{theo_mendelson2003entropy}
Assume that class of functions $\mathcal G$ is bounded by 1. There exists absolute constants $c_1$ and $c_2$ such that for all $ z_1^n \in (\R^\esd)^n$ and all $\delta \in (0,1)$,
$$ N'_{\| \cdot \|} \left(  \delta  ,  \frac 1{\sqrt n} \mathcal G ( z_1^n) \right)  \leq \left( \frac{2}{\delta} \right)^{c_1 \fat_{c_2 \gamma}(\mathcal G)}. $$ 
\end{theorem}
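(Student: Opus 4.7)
The statement is Theorem~1 of Mendelson (2003), cited from the literature rather than derived from scratch; I sketch the strategy I would use to reprove it, which follows the Alon--Ben-David--Cesa-Bianchi--Haussler (ABCH) lineage later refined by Mendelson and Vershynin. The overall plan is to quantize $\mathcal G$ vertically to obtain a discrete class, reduce the covering number problem to counting sign patterns of that discrete class on the sample, and finally control the resulting combinatorial count by the fat-shattering dimension of the original class at a comparable scale.

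First I would replace each $g \in \mathcal G$ by its nearest multiple of $\alpha \delta$ for a small absolute constant $\alpha$, obtaining a discrete class $\widetilde{\mathcal G}$ whose functions take values in an $\alpha\delta$-grid. By construction $\|g - \tilde g\|_\infty \leq \alpha\delta/2$, so any $(\delta/2)$-cover of $\widetilde{\mathcal G}(z_1^n)$ in the rescaled Euclidean metric $\tfrac{1}{\sqrt n}\|\cdot\|$ yields a $\delta$-cover of $\mathcal G(z_1^n)$ at the original scale. A $(\delta/2)$-packing of $\widetilde{\mathcal G}(z_1^n)$ in this metric is in turn bounded by the number of distinct vector patterns $(\tilde g(z_1),\dots,\tilde g(z_n))$ achieved on the sample, so it is enough to bound that pattern count.

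The heart of the argument is the combinatorial step: bounding the number of distinct patterns of $\widetilde{\mathcal G}$ on $z_1^n$ by $(2/\delta)^{c_1 \fat_{c_2 \delta}(\mathcal G)}$. A direct Sauer--Shelah argument applied to the binary family $\{\mathds{1}[g \geq u]\,:\,g \in \widetilde{\mathcal G},\ u \in \alpha\delta \mathbb Z\}$ yields the ABCH-type bound of order $(en/d)^d$ with $d = \fat_{c_2\delta}(\mathcal G)$, which however carries a spurious dependence on the sample size $n$. Removing that $n$-factor to land on the clean $(2/\delta)^{O(\fat)}$ bound of Mendelson is the main obstacle. The resolution uses a probabilistic sub-sampling / dual Sudakov argument (equivalently a single-scale chaining), in which one considers a random coordinate restriction of the sample and shows that the empirical diameter on the sub-sample concentrates sharply enough to decouple the combinatorial count from $n$. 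This probabilistic refinement of the ABCH counting lemma is where I would expect to spend most of the effort; the quantization and packing reductions above are essentially bookkeeping.
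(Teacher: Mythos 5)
The paper does not prove this statement: it is imported verbatim as Theorem~1 of \cite{mendelson2003entropy} in the appendix of recalled tools, so there is no internal proof to compare your sketch against. (Incidentally, the subscript $\fat_{c_2\gamma}$ in the paper's statement is a typo for $\fat_{c_2\delta}$, which you have silently and correctly repaired.)

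Judged on its own terms, your sketch identifies the right architecture --- vertical quantization to an $\alpha\delta$-grid, reduction of covering to counting realized patterns on $z_1^n$, and a combinatorial bound in terms of the fat-shattering dimension --- and you are right that the quantization and packing reductions are bookkeeping. But the entire content of Mendelson's theorem lives in the step you defer: upgrading the Alon--Ben-David--Cesa-Bianchi--Haussler bound of order $(n/d)^{O(d)}$ to the $n$-free bound $(2/\delta)^{O(\fat_{c_2\delta}(\mathcal G))}$. Your sketch names candidate techniques (``probabilistic sub-sampling / dual Sudakov argument, equivalently a single-scale chaining'') without giving the argument, and the gesture is not quite aimed at the actual mechanism: the Mendelson--Vershynin proof proceeds by showing that a large $\delta$-separated subset of $\mathcal G(z_1^n)$ forces the existence of a large \emph{separated tree} of functions, then bounds the number of leaves of such a tree by an induction tied to the shattering dimension, with a probabilistic coordinate-extraction lemma as the engine. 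Without that counting lemma (or an equivalent), the argument as written only delivers the weaker ABCH-type bound with a residual $\log n$ dependence, which is strictly less than what the statement claims. So the sketch is a correct roadmap but not a proof; the one step you flag as ``the main obstacle'' is precisely the theorem.
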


 \begin{lemma}{\cite[Lemma 37]{brecheteau2020k}} \label{lem_CClem37}
 Let $R >0$ and $\mathcal H = \{ z \mapsto \frac 1R \mathds{1}_{B(0,R)}(z) \langle z,v \rangle \, : \, v \in S(0,1)\} $
 where $S(0,1)$ is the unit sphere of $\R^\esd$. Then, for any $\delta >0 $,
 $$ \fat_{\delta}(\mathcal H) \leq \esd +2.$$ 
\end{lemma}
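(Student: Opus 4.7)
The plan is to argue by contradiction using Radon's theorem. Suppose $\{z_1,\dots,z_m\} \subset \R^\esd$ with $m \geq \esd + 2$ is $\delta$-shattered by $\mathcal H$, witnessed by reals $(u_1,\dots,u_m)$, so that for every $\varepsilon \in \{-1,+1\}^m$ there is $v_\varepsilon \in S(0,1)$ with $\varepsilon_i\!\left(\tfrac{1}{R}\mathds{1}_{B(0,R)}(z_i)\langle z_i, v_\varepsilon\rangle - u_i\right) \geq \delta$ for all $i$. The goal is to extract a contradiction from the existence of a Radon partition.

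The first step would be to rule out shattered points outside the ball. If some $z_j \notin B(0,R)$, then the indicator kills the function value and the shattering constraint reduces to $-\varepsilon_j u_j \geq \delta$ simultaneously for $\varepsilon_j = +1$ and $\varepsilon_j = -1$, i.e.\ $u_j \leq -\delta$ and $u_j \geq \delta$, which is impossible. Hence every shattered point lies in $B(0,R)$, the indicator can be dropped, and the constraints become purely linear:
$$\varepsilon_i\!\left(\tfrac{1}{R}\langle z_i, v_\varepsilon\rangle - u_i\right) \geq \delta \qquad (i=1,\dots,m).$$

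The second step would invoke Radon's theorem. Since $m \geq \esd + 2$ points live in $\R^\esd$, there exists a partition of $\{1,\dots,m\}$ into $I_+ \sqcup I_-$ and convex coefficients $(\alpha_i)_{i \in I_+}$, $(\beta_i)_{i \in I_-}$ with $\sum_{I_+} \alpha_i = \sum_{I_-}\beta_i = 1$ and $\sum_{i\in I_+}\alpha_i z_i = \sum_{i\in I_-}\beta_i z_i =: z^*$. Apply shattering with $\varepsilon^{(1)}_i = +1$ on $I_+$ and $-1$ on $I_-$ to obtain $v^{(1)}\in S(0,1)$ satisfying $\langle z_i, v^{(1)}\rangle/R \geq u_i + \delta$ on $I_+$ and $\langle z_i, v^{(1)}\rangle/R \leq u_i - \delta$ on $I_-$. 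Taking convex combinations with the Radon coefficients and using linearity of the inner product,
$$\tfrac{1}{R}\langle z^*, v^{(1)}\rangle \geq \sum_{i\in I_+}\alpha_i u_i + \delta \quad \text{and} \quad \tfrac{1}{R}\langle z^*, v^{(1)}\rangle \leq \sum_{i\in I_-}\beta_i u_i - \delta,$$
which forces $\sum_{i\in I_+}\alpha_i u_i - \sum_{i\in I_-}\beta_i u_i \leq -2\delta$. Repeating with $\varepsilon^{(2)} = -\varepsilon^{(1)}$ yields the reverse inequality $\geq 2\delta$, a contradiction. Thus $m \leq \esd + 1$, which implies $\fat_\delta(\mathcal H) \leq \esd + 1 \leq \esd + 2$.

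The only genuine obstacle is the bookkeeping around the indicator factor: one must be careful that the shattering condition still makes sense when the function value is clipped to zero outside $B(0,R)$. This is precisely what the first step resolves, after which the argument is a clean linear-algebraic pigeonhole via Radon. No dependence on $\delta$ appears in the bound because the contradiction is driven by affine dependence of the $z_i$'s, not by any margin-vs-norm trade-off.
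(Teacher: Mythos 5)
Your argument is correct, and it is worth noting that the paper itself does not prove this lemma at all: it is imported verbatim as Lemma~37 of \cite{brecheteau2020k}, so there is no internal proof to compare against. Your two steps are both sound. The elimination of points outside $B(0,R)$ is exactly right: for such a $z_j$ every $g\in\mathcal H$ vanishes, and the two sign choices $\varepsilon_j=\pm 1$ force $u_j\leq -\delta$ and $u_j\geq\delta$ simultaneously. The Radon step is also clean; since shattering a set implies shattering every subset, it suffices to treat $m=\esd+2$ exactly, where Radon's theorem applies directly, and the margin $\delta>0$ turns the affine dependence into the contradiction $\sum_{I_+}\alpha_i u_i-\sum_{I_-}\beta_i u_i\leq -2\delta$ and $\geq 2\delta$. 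In fact you prove the strictly stronger bound $\fat_\delta(\mathcal H)\leq \esd+1$, which of course implies the stated $\esd+2$; the cited source obtains its constant by a more generic route (bounding the fat-shattering dimension via pseudo-dimension or VC-type parameter counting for the class of clipped linear functionals), which is why its bound is slightly looser. Your approach is more elementary and self-contained, at the cost of being specific to linear functionals; the parameter-counting route generalizes more readily to other function classes. Either bound is sufficient for the way the lemma is used in Proposition~\ref{prop:procem}.
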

\end{document}